\documentclass{amsart}

\usepackage{amsmath,amssymb,amsfonts,amsthm,amscd,amsxtra}
\usepackage{mathrsfs}
\usepackage{mathtools}
\usepackage{cases}
\usepackage{latexsym}
\usepackage{bm}
\usepackage{bbm}
\usepackage{dsfont}

\usepackage{indentfirst}
\usepackage{fancyhdr}
\usepackage{ifthen}
\usepackage{verbatim}
\usepackage{scalefnt}

\usepackage{graphicx}      % 推荐统一用它（替代 graphics / epsfig 的日常用途）
\usepackage{epic,eepic}    % 你画 picture 需要
\usepackage{pb-diagram}
\usepackage{youngtab}

\usepackage[arrow,matrix]{xy}

\usepackage{tikz}
\usetikzlibrary{knots}
\usepackage{tikz-cd}

\usepackage{extpfeil}

\usepackage[colorlinks=true,linkcolor=blue,citecolor=blue,urlcolor=blue]{hyperref}

\newcommand{\lam}{\lambda}

\makeatletter
\@namedef{subjclassname@2020}{\textup{2020} Mathematics Subject Classification}
\makeatother

\DeclareMathOperator{\End}{End} 
 \DeclareMathOperator{\Ker}{Ker}
 
\DeclareMathOperator{\id}{id}

\DeclareMathOperator{\Hom}{Hom} \DeclareMathOperator{\sign}{sign}
\DeclareMathOperator{\Ext}{Ext}

\DeclareMathOperator{\Image}{Im}

\numberwithin{equation}{section}

\newcommand{\mbb}{\mathfrak{B}}

\newcommand{\A}{\mathscr{A}}

\newcommand{\bBS}{\mathfrak S}

\newcommand{\mmU}{\mathbb U}

\newcommand{\mmQ}{\mathbb Q}

\theoremstyle{plain}
\newtheorem{theorem}{Theorem}[section]
\newtheorem{lemma}[theorem]{Lemma}
\newtheorem{proposition}[theorem]{Proposition}
\newtheorem{corollary}[theorem]{Corollary}

\theoremstyle{definition}
\newtheorem{definition}[theorem]{Definition}

\begin{document}

\title[Dual partially harmonic tensors]
{Dual partially harmonic tensors and quantized Schur--Weyl duality}

\author{Pei Wang,  Zhankui Xiao}

\address{Wang: Techers' College, Beijing Union University,
Beijing, 100011, P. R. China}

\email{ldwangpei@buu.edu.cn}

\address{Xiao: School of Mathematical Sciences, Huaqiao University,
Quanzhou, Fujian, 362021, P. R. China}

\email{zhkxiao@hqu.edu.cn}

\thanks{Xiao is supported by the Natural Science Foundation of Fujian Province
(No. 2023J01126).\\}

\subjclass[2020]{20C08, 20G15, 20M30}
\keywords{Birman--Murakami--Wenzl algebra, partially harmonic tensor, Schur--Weyl duality, diagram category of framed tangles}

\begin{abstract}
Let $V$ be a $2m$-dimensional symplectic space over an infinite field $K$. Let $\mathfrak{B}^{(f)}_{n,K}$
be the two-sided ideal of the Birman--Murakami--Wenzl algebra $\mathfrak{B}_{n,K}$ generated by
$E_1E_3\cdots E_{2f-1}$ with $1\leq f\leq\left\lfloor \frac n2 \right\rfloor$. In this paper, using the diagram category of framed tangles
and canonical basis, we prove that the natural homomorphism from $\mathfrak{B}_{n,K}/\mathfrak{B}^{(f)}_{n,K}$
to $\End_{U_q(\mathfrak{sp}_{2m})}\big(V^{\otimes n}/V^{\otimes n}\mathfrak{B}^{(f)}_{n,K} \big)$ is always surjective.
\end{abstract}

\maketitle

\section{Introduction}\label{xxsec1}

Let $m,n\in \mathbb{N}$ and $K$ be an infinite field. Let $V$ be a $2m$-dimensional
symplectic space over $K$ equipped with a non-degenerate skew symmetric bilinear
form $\langle\ ,\ \rangle$.
Let $\mathfrak{B}_n:=\mathfrak{B}_n(-2m)$ be the Brauer algebra over $K$ with the standard
generators $s_1,\ldots,s_{n-1},e_1,\ldots,e_{n-1}$ and parameter $-2m$ (see \cite{DDH} for example). There is a
right action of $\mathfrak{B}_n$ on $V^{\otimes n}$ which commutes with the left
action of the symplectic group $\mathrm{Sp}(V)$. Hence there are two natural $K$-algebra homomorphisms:
$$
\varphi: (\mathfrak{B}_n)^{\rm op}\rightarrow \End_{K\mathrm{Sp}(V)}(V^{\otimes n}),\quad
\psi: K\mathrm{Sp}(V)\rightarrow \End_{\mathfrak{B}_n}(V^{\otimes n}).
$$

The following results are referred to as
Brauer--Schur--Weyl duality for the symplectic group (or Schur--Weyl duality of type $C$).

\begin{theorem}\label{xx1.1}
{\rm (\cite{Brauer},\cite{DePr},\cite{DDH},\cite{Oehms})}
\begin{enumerate}
\item[(1)] Both $\varphi$ and $\psi$ are surjective.

\item[(2)] If $m\geq n$, then $\varphi$ is injective, and hence an isomorphism.
\end{enumerate}
\end{theorem}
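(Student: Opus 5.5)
This is a classical theorem cited from the literature (Brauer, De Concini--Procesi, Dipper--Doty--Hu, Oehms). My plan follows the characteristic-free approach of De Concini--Procesi and Dipper--Doty--Hu.

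\textbf{Surjectivity of $\psi$.} I would first identify $\End_{\mathfrak{B}_n}(V^{\otimes n})$ with the space of $\mathfrak{B}_n$-invariants in $\End_K(V)^{\otimes n}\cong (V\otimes V)^{\otimes n}$, using the form to identify $V\cong V^*$. Since $s_i$ generates the symmetric group, commuting with $\mathfrak{S}_n$ alone gives the Schur algebra, i.e.\ symmetric tensors. Because $K$ is infinite, symmetric tensors are spanned by the powers $g^{\otimes n}$ with $g\in\End(V)$. The additional conditions of commuting with the $e_i$ cut this space down. The plan is to show that the resulting space equals the image of the symplectic Schur algebra $S^{sy}(2m,n)$. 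Here one invokes Donkin's result that the symplectic Schur algebra is a generalized Schur algebra with a good filtration. The image of $K\mathrm{Sp}(V)$ is the span of the $g^{\otimes n}$ with $g\in\mathrm{Sp}(V)$; this is the coordinate-ring dual, and because $K$ is infinite, $\mathrm{Sp}(V)$ is Zariski dense in its own variety. A dimension comparison then shows both spaces have dimension $\sum_{\lambda}(\dim\Delta(\lambda))^2$, using the good filtration of $V^{\otimes n}$ as an $\mathrm{Sp}$-module together with its Ext-vanishing.

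\textbf{Surjectivity of $\varphi$.} Here I would use the isomorphism
\[
\End_{\mathrm{Sp}(V)}(V^{\otimes n})\cong \bigl(V^{\otimes 2n}\bigr)^{\mathrm{Sp}(V)}.
\]
The key input is the first fundamental theorem of invariant theory for $\mathrm{Sp}$ in arbitrary characteristic, due to De Concini--Procesi: the invariants are spanned by products of the pairings $\langle v_i,v_j\rangle$. Under the isomorphism these products correspond exactly to Brauer diagrams acting via $\varphi$. This step is the main obstacle, since the characteristic-free FFT needs standard monomial theory or good filtrations. I would cite it, or alternatively deduce it from the fact that $\dim(V^{\otimes 2n})^{\mathrm{Sp}}$ is independent of characteristic (again by good filtration and base change from $\mathbb{Z}$), combined with the characteristic-zero case.

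\textbf{Injectivity of $\varphi$ when $m\ge n$.} The algebra $\mathfrak{B}_n$ has dimension $(2n-1)!!$. The second fundamental theorem says the relations among the pairings come from Pfaffians of size $2m+2$, and these vanish identically once $2n<2m+2$, i.e.\ when $m\ge n$. So the images of the Brauer diagrams are linearly independent. Concretely, I would exhibit for each diagram a basis evaluation on suitable tensors $e_{i_1}\otimes\cdots$ with distinct indices, possible since $m\ge n$, which detects that diagram uniquely. This gives a triangularity argument, hence linear independence, and therefore $\varphi$ is an isomorphism.
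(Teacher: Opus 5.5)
The paper gives no proof of this theorem; it is quoted from \cite{Brauer,DePr,DDH,Oehms}. So the comparison here is with those classical arguments.

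\textbf{Your treatment of $\varphi$.} This part is the standard one and is sound. Take the multidegree $(1,\dots,1)$ component of De Concini--Procesi's characteristic-free first fundamental theorem. It shows that $(V^{\otimes 2n})^{\mathrm{Sp}(V)}\cong\End_{\mathrm{Sp}(V)}(V^{\otimes n})$ is spanned by perfect-matching monomials in the pairings, i.e.\ (up to sign) by the images of Brauer diagrams. For $m\ge n$, your evaluation argument already proves that the $(2n-1)!!$ diagram images are linearly independent: attach $v_i,v_{i'}$ with pairwise distinct $i\le m$ to the $n$ blocks of a fixed matching. This does not even need the second fundamental theorem.

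Drop your fallback route for $\varphi$, however. Knowing that $\dim(V^{\otimes 2n})^{\mathrm{Sp}(V)}$ is independent of the characteristic, plus the characteristic-zero case, only suffices if you also know that the rank of the integral diagram images does not drop modulo $p$. That is a purity statement, and good filtrations do not give it.

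\textbf{The gap: surjectivity of $\psi$.} The image of $K\mathrm{Sp}(V)$ is $S^{sy}(2m,n)$, of dimension $\sum_{\lambda\in\pi_0}(\dim\Delta(\lambda))^2$, and it clearly lies in $\End_{\mathfrak{B}_n}(V^{\otimes n})$. What you must prove is the reverse inequality $\dim\End_{\mathfrak{B}_n}(V^{\otimes n})\le\dim S^{sy}(2m,n)$. Equivalently, a symmetric tensor $\sum_g c_g\,g^{\otimes n}$ with $g\in\End(V)$ that commutes with the $e_i$ must lie in the span of the $g^{\otimes n}$ with $g\in\mathrm{Sp}(V)$. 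Nothing in your sketch supplies this.

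\begin{itemize}
\item The good filtration of $V^{\otimes n}$ and $\Ext^1$-vanishing between Weyl-filtered and good-filtered modules compute $\Hom_{\mathrm{Sp}(V)}$-spaces such as $\End_{\mathrm{Sp}(V)}(V^{\otimes n})$. They say nothing about the commutant of $\mathfrak{B}_n$.
\item You also cannot import the answer from characteristic $0$. The commutant is the kernel of a linear system with integer coefficients, so its dimension can only increase under reduction modulo $p$, which is the wrong direction.
\end{itemize}

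\textbf{How to close it.} The missing ingredient is Oehms's identification of the centralizer coalgebra of the Brauer generators on $V^{\otimes n}$ with the degree-$n$ part of the coordinate ring of the symplectic monoid. That coalgebra is $\mathbb{Z}$-free of the expected rank, and its dual is $S^{sy}(2m,n)$ over every infinite field. This is exactly how \cite{DDH} obtain $\psi$.

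Alternatively, once $\varphi$ is surjective you may reduce to the double centralizer property $\End_{\End_S(V^{\otimes n})}(V^{\otimes n})=S$ for $S=S^{sy}(2m,n)$. This still needs a genuine argument. Ringel duality gives it only for full tilting modules, and $V^{\otimes n}$ is not one in general. For example, take $n=2$ and $\mathrm{char}K$ an odd prime dividing $m$: then $e^2=0$ in $\mathfrak{B}_2(-2m)$, $\Lambda^2V$ is indecomposable, and the trivial module is not a summand of $V^{\otimes 2}$. So you would need an extra input, such as a dominant-dimension estimate of the kind developed in \cite{HuXiao21}.
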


For each integer $0\leq f\leq \left\lfloor \frac n2 \right\rfloor$, let $\mathfrak{B}^{(f)}_n$
be the two-sided ideal of the Brauer algebra $\mathfrak{B}_n$ generated by
$e_1e_3\cdots e_{2f-1}$. By convention, we write $\mathfrak{B}^{(0)}_n=\mathfrak{B}_n$ and
$\mathfrak{B}^{(\left\lfloor \frac n2 \right\rfloor+1)}_n=0$. Define $\mathcal{HT}^{\otimes n}_f$ to be the space of
{\em partially harmonic tensors of valence f} as follows
$$
\mathcal{HT}^{\otimes n}_f:=\left\{v\in V^{\otimes n}\mathfrak{B}^{(f)}_n\ \left|\ vx=0,
\ \forall x\in \mathfrak{B}^{(f+1)}_n\right\}\right..
$$
We would like to remark that $\mathcal{HT}^{\otimes n}_0$ is also known as the subspace of {\em traceless tensors} or
{\em harmonic tensors} (see \cite{DeSt}, \cite[\S 10.2.1]{GoodW} and \cite[Corollary 2.6]{Hu}).
Note that these spaces play an important role in the invariant theory of classical groups \cite[Chapter 10]{GoodW}.

Since $\mathfrak{B}_n/\mathfrak{B}^{(1)}_n\cong K\mathfrak{S}_n$, the group algebra of the symmetric
group $\mathfrak{S}_n$, the right action of $\mathfrak{B}_n$ on $V^{\otimes n}$ gives rise to a right action of
$K\mathfrak{S}_n$ on $\mathcal{HT}^{\otimes n}_0$ and hence a $K$-algebra homomorphism
$$
\varphi_1: (K\mathfrak{S}_n)^{\rm op}\rightarrow \End_{K\mathrm{Sp}(V)}(\mathcal{HT}^{\otimes n}_0).
$$
When $K$ is algebraically closed, De Concini and Strickland \cite{DeSt} proved that the dimension of $\mathcal{HT}^{\otimes n}_0$
is independent of the field $K$ and $\varphi_1$ is always surjective. Furthermore, they showed that
$\varphi_1$ is an isomorphism if $m\geq n$ and described $\Ker \varphi_1$ when $m<n$.
On the other hand, using the rational representations of symplectic groups, Maliakas \cite{Mali}
proved that $(\mathcal{HT}^{\otimes n}_0)^{\ast}$ has a good filtration whenever $m\geq n$ and
he claimed that this kind of filtration exists for arbitrary $m$.

Let $K$ be algebraically closed, for any integer $0\leq f\leq \left\lfloor \frac n2 \right\rfloor$, in \cite[Theorem 1.5]{Hu}
Hu proved that
both $V^{\otimes n}/V^{\otimes n}\mathfrak{B}^{(f)}_n$ and $V^{\otimes n}\mathfrak{B}^{(f)}_n$
have a good filtration as $\mathrm{Sp}(V)$-modules. Furthermore, he showed that there is a
$(K\mathrm{Sp}(V),\mathfrak{B}_n/\mathfrak{B}^{(f+1)}_n)$-bimodule isomorphism
$V^{\otimes n}\mathfrak{B}^{(f)}_n/V^{\otimes n}\mathfrak{B}^{(f+1)}_n\cong
(\mathcal{HT}^{\otimes n}_f)^{\ast}$, which gave a positive answer for the claim of Maliakas \cite{Mali}.
Because of the just mentioned bimodule isomorphism, we usually call $V^{\otimes n}\mathfrak{B}^{(f)}_n/V^{\otimes n}\mathfrak{B}^{(f+1)}_n$
the space of {\em dual partially harmonic tensors of valence f}.
Notice that the algebra homomorphism $\varphi$ naturally induces a $K$-algebra homomorphism for $1\leq f\leq \left\lfloor \frac n2 \right\rfloor$
$$
\varphi_f: \big(\mathfrak{B}_n/\mathfrak{B}^{(f)}_n\big)^{\rm op}\rightarrow
\End_{K\mathrm{Sp}(V)}\big(V^{\otimes n}/V^{\otimes n}\mathfrak{B}^{(f)}_n \big).
$$
Motivated by the main results of \cite{DeSt,Mali}, Hu proved the
following Schur--Weyl duality for partially harmonic tensors in
\cite[Theorem 1.8 and Proposition 5.3]{Hu}.

\begin{theorem}\label{xx1.2}
For any infinite field $K$, and any integer $f$ with $1\leq f\leq\left\lfloor \frac n2 \right\rfloor$,
\begin{enumerate}
\item[(1)] the dimension of the endomorphism algebra $\End_{K\mathrm{Sp}(V)}\big(
V^{\otimes n}/V^{\otimes n}\mathfrak{B}^{(f)}_n \big)$ is independent of $K$;

\item[(2)] the homomorphism $\varphi_f$ is surjective.
\end{enumerate}
\end{theorem}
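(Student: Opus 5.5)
Theorem \ref{xx1.2} is due to Hu \cite{Hu}; here we outline how one would prove it from Theorem \ref{xx1.1} together with Hu's good filtration results. The idea is to realize both sides of $\varphi_f$ as reductions of $\mathbb{Z}$-forms whose ranks can be controlled. We then show that surjectivity over $K$ follows from a dimension comparison with the characteristic-zero (semisimple) situation.

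\textbf{Step 1: dimension independence.} Write $M_f:=V^{\otimes n}/V^{\otimes n}\mathfrak{B}^{(f)}_n$. We first work over an algebraically closed field, invoking Hu's result \cite[Theorem 1.5]{Hu} that both $M_f$ and $V^{\otimes n}\mathfrak{B}^{(f)}_n$ have good filtrations. Since $V^{\otimes n}$ is self-dual, the dual $M_f^{\ast}$ is identified with the annihilator of $V^{\otimes n}\mathfrak{B}^{(f)}_n$; we want to show that this dual has a Weyl filtration. Then
$$\dim\Hom_{\mathrm{Sp}(V)}(M_f,M_f)=\dim\Hom(M_f^{\ast}\otimes M_f, K)$$
is computed by the Ext-vanishing between Weyl and good filtrations, $\Ext^1(\Delta,\nabla)=0$. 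The multiplicities of $\nabla(\lambda)$ in $M_f$ are determined by characters, and these characters are independent of $K$: $V^{\otimes n}\mathfrak{B}^{(f)}_n$ is spanned by images of tensors of fixed combinatorial form, and its dimension equals the characteristic-zero value by Hu's filtration. This gives the dimension formula $\sum_\lambda (\text{mult}_\lambda)^2$, independent of $K$. For a general infinite field $K$, we pass to the algebraic closure $\overline{K}$: since $\mathrm{Sp}(V)(K)$ is dense in $\mathrm{Sp}(V)(\overline K)$, fixed points commute with field extension.

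\textbf{Step 2: surjectivity.} Define everything over $\mathbb{Z}$ (or a localization of it): the lattice $V_{\mathbb Z}^{\otimes n}$, the submodule generated by $\mathfrak{B}^{(f)}$, and the endomorphism lattice $E_{\mathbb Z}$ of $\mathrm{Dist}(\mathrm{Sp})$-equivariant maps. By Step 1, $E_{\mathbb Z}\otimes K\cong \End_{K\mathrm{Sp}(V)}(M_f)$ for every $K$; this is a base-change statement, which holds because the rank of the equivariant endomorphisms is constant. Over $\mathbb{Q}$ the module is semisimple, so $\varphi_f$ is surjective there by classical double-centralizer theory applied to Theorem \ref{xx1.1}. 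Then $\mathrm{Im}(\varphi_f)_{\mathbb Z}\subseteq E_{\mathbb Z}$ has full rank, and it remains to show that it is saturated.

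\textbf{Main obstacle.} Showing saturation, i.e.\ surjectivity modulo every prime, is the hard step. We would try a lifting argument. Theorem \ref{xx1.1}(1) gives that every $\mathrm{Sp}$-endomorphism of $V^{\otimes n}$ comes from $\mathfrak{B}_n$. Any endomorphism of $M_f$ should lift to an endomorphism of $V^{\otimes n}$ preserving $V^{\otimes n}\mathfrak{B}^{(f)}_n$. The lifting requires
$$\Ext^1_{\mathrm{Sp}}\big(M_f,\,V^{\otimes n}\mathfrak{B}^{(f)}_n\big)=0,$$
which we would deduce from the good filtration of $V^{\otimes n}\mathfrak{B}^{(f)}_n$ together with a Weyl filtration of $M_f$. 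That Weyl filtration in turn follows from the self-duality of $V^{\otimes n}$ and Hu's bimodule isomorphism with partially harmonic tensors. With the Ext group vanishing, $\Hom(V^{\otimes n},V^{\otimes n})\to\Hom(V^{\otimes n},M_f)$ is surjective, and $\Hom(M_f,M_f)\hookrightarrow\Hom(V^{\otimes n},M_f)$. Composing these with Theorem \ref{xx1.1}(1) yields that $\varphi_f$ is surjective.
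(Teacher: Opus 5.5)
Your proposal has a genuine gap at the step you flag as the main obstacle: you ask for the wrong Ext group to vanish, and it does not vanish in general. $\Ext^1_{\mathrm{Sp}}\big(M_f,V^{\otimes n}\mathfrak{B}^{(f)}_n\big)=0$ would need a Weyl filtration on $M_f$. Self-duality of $V^{\otimes n}$ and Hu's bimodule isomorphism only give a Weyl filtration on $M_f^{\ast}$ (the annihilator of $V^{\otimes n}\mathfrak{B}^{(f)}_n$). That is equivalent to a good filtration on $M_f$, not a Weyl filtration on $M_f$ itself.

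Here is a concrete counterexample. Take $n=2$, $f=1$, and $K$ of odd characteristic $p$ with $p\mid m$. Then $V^{\otimes 2}\mathfrak{B}^{(1)}_2=K\alpha$ with $\alpha\in\Lambda^2V$ the invariant tensor, so $M_1=S^2V\oplus\Lambda^2V/K\alpha$. The contraction $u\wedge w\mapsto\langle u,w\rangle$ sends $\alpha$ to $2m=0$, so it induces a surjection $\Lambda^2V/K\alpha\to K$. Now $\Lambda^2V/K\alpha$ has the character of $\Delta(\varepsilon_1+\varepsilon_2)$ but a trivial quotient, so it is not Weyl-filtered, and hence neither is $M_1$. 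Moreover, the contraction spans the invariant linear forms on $\Lambda^2V$ and kills $\alpha$, so $0\to K\alpha\to\Lambda^2V\to\Lambda^2V/K\alpha\to0$ does not split. Therefore $\Ext^1\big(M_1,V^{\otimes 2}\mathfrak{B}^{(1)}_2\big)\neq 0$.

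The same problem undermines Step 1. Counting $\dim\End(M_f)$ via $\Delta$--$\nabla$ Ext-vanishing needs a Weyl filtration on the source $M_f$, and rewriting the count as $\Hom(M_f^{\ast}\otimes M_f,K)$ does not help. Your formula $\sum_\lambda\mathrm{mult}_\lambda^2$ happens to be numerically correct, but it is not justified by your argument.

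The repair is what the paper does for the quantum analogue (Lemmas \ref{quantum-hom0} and \ref{quantum-lm42}), following Hu. Let $\pi\colon V^{\otimes n}\to M_f$ be the projection. Lifting $\phi\circ\pi$ through $\pi$ is obstructed by $\Ext^1\big(V^{\otimes n},V^{\otimes n}\mathfrak{B}^{(f)}_n\big)$. This group vanishes because the \emph{source} $V^{\otimes n}$ is tilting and $V^{\otimes n}\mathfrak{B}^{(f)}_n$ has a good filtration. That gives exactly the surjection $\End(V^{\otimes n})\to\Hom(V^{\otimes n},M_f)$ asserted in your last sentence. Combined with the injection $\End(M_f)\hookrightarrow\Hom(V^{\otimes n},M_f)$ and Theorem \ref{xx1.1}(1), it proves (2).

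For (1), show first that $\Hom\big(V^{\otimes n}\mathfrak{B}^{(f)}_n,M_f\big)=0$. Indeed, $V^{\otimes n}\mathfrak{B}^{(f)}_n=\mathcal{O}_{\pi_f}(V^{\otimes n})$ lies in $\mathcal{C}(\pi_f)$, while the good filtration of $M_f$ has no factor $\nabla(\lambda)$ with $\lambda\in\pi_f$. Hence $\End(M_f)\cong\Hom(V^{\otimes n},M_f)$, whose dimension $\sum_\lambda[V^{\otimes n}:\Delta(\lambda)][M_f:\nabla(\lambda)]$ is independent of $K$. With this, the $\mathbb{Z}$-form and saturation machinery of your Step 2 is unnecessary. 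Everything works directly over $\overline{K}$, followed by your density argument to descend to an arbitrary infinite $K$.
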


The main motivation of this paper is to show a quantized version of  Theorem~\ref{xx1.2},
which was left as a desired unsolved problem by Hu (see the second paragraph and footnote on \cite[Page 335]{Hu}).
To this end, we first need to recall the integral version of quantized Schur--Weyl duality of type $C$ \cite{Hu11}.

Let $q$ be an indeterminate over $\mathbb{Z}$ and $\mathscr{A}:=\mathbb{Z}[q,q^{-1}]$ be the Laurent
polynomial ring in $q$. Let $U_{\mathbb{Q}(q)}(\mathfrak{sp}_{2m})$ be the quantized enveloping algebra
of the symplectic Lie algebra $\mathfrak{sp}_{2m}$ over the rational function field $\mathbb{Q}(q)$ (see \cite{Lus}).
Let $U_{\mathscr{A}}(\mathfrak{sp}_{2m})$ be the Lusztig's $\mathscr{A}$-form in $U_{\mathbb{Q}(q)}(\mathfrak{sp}_{2m})$.
For any infinite field $K$, $\zeta\in K^{\times}$, the specialization $q\mapsto \zeta$ makes $K$ into
an $\mathscr{A}$-module. For convenience, we always identify $q$ with its natural image $\zeta$ in $K$
and define $U_{q}(\mathfrak{sp}_{2m}):=U_{\mathscr{A}}(\mathfrak{sp}_{2m})\otimes_{\mathscr{A}} K$.

Let $V$ be a $2m$-dimensional symplectic space over $K$ equipped with a non-degenerate skew symmetric bilinear
form $\langle\ ,\ \rangle$. Then $V$,  and hence $V^{\otimes n}$, become a left $U_{q}(\mathfrak{sp}_{2m})$-module
(we refer the reader to \cite[Section 2]{Hu11} for the explicit action).
For any commutative $\mathscr{A}$-algebra $R$, we denote by $\mathfrak{B}_{n,R}$ the Birman--Murakami--Wenzl algebra (BMW algebra for short) over $R$.
 There is a right action of the specialized BMW algebra $\mathfrak{B}_{n,K}$ on the symplectic
tensor space $V^{\otimes n}$ which commutes with the left action of $U_{q}(\mathfrak{sp}_{2m})$.
We refer the reader to Section \ref{xxsec2} for the precise definition of $\mathfrak{B}_{n,K}$
and its action on $V^{\otimes n}$. Let $\varphi$ and $\psi$ be the natural $K$-algebra homomorphisms
$$
\varphi: (\mathfrak{B}_{n,K})^{\rm op}\rightarrow \End_{U_{q}(\mathfrak{sp}_{2m})}(V^{\otimes n}),$$
$$
\psi: U_{q}(\mathfrak{sp}_{2m})\rightarrow \End_{\mathfrak{B}_{n,K}}(V^{\otimes n})
$$
respectively. Moreover we have
\begin{theorem}\label{xx1.3}
{\rm (\cite{CP},\cite{Haya},\cite{Hu11})}
\begin{enumerate}
\item[(1)] Both $\varphi$ and $\psi$ are surjective.

\item[(2)] If $m\geq n$, then $\varphi$ is injective, and hence an isomorphism.
\end{enumerate}
\end{theorem}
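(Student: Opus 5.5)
The natural route is to work integrally over $\mathscr{A}$ and then specialize. Write $V_{\mathscr{A}}$ for the $\mathscr{A}$-form of the natural module, so that $V^{\otimes n}=V_{\mathscr{A}}^{\otimes n}\otimes_{\mathscr{A}}K$ compatibly with the actions of $U_{\mathscr{A}}(\mathfrak{sp}_{2m})$ and $\mathfrak{B}_{n,\mathscr{A}}$. I would prove (1) for $\varphi$ first, deduce (2) from a dimension count, and treat $\psi$ last by a double-centralizer argument.

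\emph{Step 1 (stability of the endomorphism algebra).} $V$ is the Weyl module of highest weight $\varepsilon_1$. This weight is minuscule in the relevant sense, so $V$ is both a Weyl and a dual Weyl module, hence tilting. Tensor products of tilting modules over $U_q(\mathfrak{sp}_{2m})$ are tilting, so $V^{\otimes n}$ is tilting for every $K$ and every $\zeta$. Since $\mathrm{Ext}^1(\Delta(\lambda),\nabla(\mu))=0$, the dimension of $\End_{U_q}(V^{\otimes n})$ equals $\sum_\lambda (V^{\otimes n}:\Delta(\lambda))(V^{\otimes n}:\nabla(\lambda))$. These multiplicities are read off from the formal character, which is independent of $K$ and $\zeta$. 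It follows that $E_{\mathscr{A}}:=\End_{U_{\mathscr{A}}}(V_{\mathscr{A}}^{\otimes n})$ is a free $\mathscr{A}$-module and $E_{\mathscr{A}}\otimes_{\mathscr{A}}K\cong\End_{U_q}(V^{\otimes n})$. It also follows that its rank equals the classical value $d:=\dim\End_{\mathrm{Sp}(V)}(V^{\otimes n})$.

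\emph{Step 2 (surjectivity of $\varphi$).} It now suffices to show that the image of $\mathfrak{B}_{n,K}$ has dimension $d$ for every specialization. Equivalently, the cokernel $C$ of $\mathfrak{B}_{n,\mathscr{A}}\to E_{\mathscr{A}}$ must satisfy $C\otimes_{\mathscr{A}}K=0$ for all $(K,\zeta)$.
\begin{itemize}
\item At $\zeta=1$ the BMW algebra specializes to the Brauer algebra, so Theorem~\ref{xx1.1} gives the claim there.
\item For general $\zeta$, take a cellular (or canonical) basis of $\mathfrak{B}_{n,\mathscr{A}}$ indexed by Brauer diagrams. Show that the images of the diagrams corresponding to a spanning set in the classical case remain linearly independent after any specialization. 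This can be done by exhibiting, for each such diagram, a pair of weight vectors $v_{\mathbf i},v_{\mathbf j}$ on which its matrix coefficient is a unit in $\mathscr{A}$ and is triangular with respect to a suitable order.
\end{itemize}
This triangularity is the main obstacle. My first attempt would be to transport the explicit basis of $\End$ used in the classical proofs (\cite{DDH}, \cite{Oehms}) and verify unitriangularity in terms of $q$-powers of the matrix coefficients of $E_i$ and the braid generators.

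\emph{Step 3 (injectivity for $m\geq n$).} When $m\geq n$, classical invariant theory together with Step 1 gives $d=(2n-1)!!=\mathrm{rank}\,\mathfrak{B}_{n,\mathscr{A}}$. A surjection between free modules of the same finite rank is injective, so $\varphi$ is an isomorphism.

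\emph{Step 4 (surjectivity of $\psi$).} The image of $U_q$ is the symplectic $q$-Schur algebra $S$, a quotient by the annihilator of a tilting module. Since $V^{\otimes n}$ is a faithful tilting $S$-module, a standard Ringel-duality/double-centralizer argument applies, as used by Donkin and, for the quantum case, by Hu \cite{Hu11}. Combined with $\End_S(V^{\otimes n})=\varphi(\mathfrak{B}_{n,K})$ from Step 2, it yields $\End_{\mathfrak{B}_{n,K}}(V^{\otimes n})=S$. The dimension of $S$ is again field independent, which closes the argument.
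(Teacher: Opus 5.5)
The paper does not prove this statement. It is quoted from \cite{CP,Haya} for generic $q$ and from \cite{Hu11} for an arbitrary infinite field, where the integral control needed for specialization comes from quantized coordinate algebras. Judged on its own terms, your outline has two genuine gaps.

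\textbf{Step 2.} Steps 1 and 3 are fine. Constancy of $\dim\End_{U_q(\mathfrak{sp}_{2m})}(V^{\otimes n})$ over all residue fields of $\mathscr{A}$ does give that $E_{\mathscr A}$ is $\mathscr A$-free with $E_{\mathscr A}\otimes_{\mathscr A}K\cong\End_{U_q(\mathfrak{sp}_{2m})}(V^{\otimes n})$, by the constant-fibre-rank argument. But Step 2, which carries the whole content of the surjectivity of $\varphi$, is not carried out. You need the cokernel $C$ of $\mathfrak{B}_{n,\mathscr A}\to E_{\mathscr A}$ to vanish. The case $\zeta=1$ (Theorem~\ref{xx1.1}) only shows that the closed set $\mathrm{Supp}\,C\subset\mathrm{Spec}\,\mathscr A$ misses $V(q-1)$. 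It says nothing at, for example, a primitive sixth root of unity, since $V(\Phi_6(q))\cap V(q-1)=\emptyset$. The unitriangularity you propose would close this gap. However, you do not say which $d$ elements of $\mathfrak{B}_{n,\mathscr A}$ to use when $m<n$, where $\varphi$ has a kernel and no explicit basis of the image is at hand. Nor do you say which matrix coefficients are units. Calling this ``the main obstacle'' is accurate: it is the theorem.

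\textbf{Step 4.} This step rests on a principle that is false as stated. For a quasi-hereditary algebra $S$, Ringel duality gives $S\cong\End_{\End_S(T)}(T)$ when $T$ is a \emph{full} tilting module, i.e.\ every $T(\lambda)$ is a summand. More generally, the double centralizer property holds for a faithful module $M$ when the dominant dimension of $S$ relative to $M$ is at least $2$. Faithful and tilting is not enough. For example, take the path algebra $A$ of $1\to 2$, which is quasi-hereditary with $1<2$. The projective--injective $P_1$ is a faithful tilting module, yet $\End_{\End_A(P_1)}(P_1)=\End_K(P_1)$ has dimension $4>3=\dim A$.

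Moreover, $V^{\otimes n}$ is in general not full. For $m=1$, $n=2$, $\zeta=1$ and $\mathrm{char}\,K=2$, the module $V^{\otimes 2}\cong T(2\varepsilon_1)$ is indecomposable, so $T(0)$ is not a summand. The field-independence of $\dim S$ does not rescue the argument unless you also know that $\dim\End_{\mathfrak{B}_{n,K}}(V^{\otimes n})$ is field independent. That is exactly the nontrivial integral input which \cite{Hu11} supplies.
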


For each integer $0\leq f\leq\left\lfloor \frac n2 \right\rfloor$, let $\mathfrak{B}^{(f)}_{n,K}$
be the two-sided ideal of the specialized BMW algebra $\mathfrak{B}_{n,K}$ generated by
$E_1E_3\cdots E_{2f-1}$. It is clear that the algebra homomorphism $\varphi$ in Theorem \ref{xx1.3}
naturally induces a $K$-algebra homomorphism
$$
\varphi_f: \big(\mathfrak{B}_{n,K}/\mathfrak{B}^{(f)}_{n,K} \big)^{\rm op}\rightarrow
\End_{U_q(\mathfrak{sp}_{2m})}\big(V^{\otimes n}/V^{\otimes n}\mathfrak{B}^{(f)}_{n,K} \big).
$$
In this paper, we shall  prove that the algebra homomorphism $\varphi_f$ is actually surjective.

	The paper is organized as follows.
	Section~\ref{xxsec2} lays the foundation by reviewing the diagram category of framed tangles
	and the construction of the type $C$ Reshetikhin--Turaev functor, which underlies the
	diagrammatic realization of the specialized BMW algebra $\mathfrak{B}_{n,K}$ on the tensor space
	and hence the quantized Schur--Weyl duality in our setting.
	Section~\ref{xxsec3} is devoted to the study of the submodules $V^{\otimes n}\mathfrak{B}^{(f)}_{n,K}$.
	The first main result is the identification
	$V^{\otimes n}\mathfrak{B}^{(f)}_{n,K}=\mathcal{O}_{\pi_f}(V^{\otimes n})$, which is obtained by constructing
	a good filtration of $V^{\otimes n}$ compatible with the BMW-ideal filtration.
	This key structural theorem implies that these submodules and their quotients also admit good filtrations;
	moreover, as consequences we derive the basic properties of the corresponding quantum partially harmonic tensors,
	including the existence of Weyl filtrations.
	Building on this structural understanding, Section~\ref{xxsec4} proves the second main result:
	the surjectivity of the natural homomorphism $\varphi_f$, thereby establishing the quantized
	Schur--Weyl duality for the space of dual partially harmonic tensors.

\section{Preliminaries}\label{xxsec2}

In this section, we recall a topological categorification of the quantized Schur--Weyl duality of type $C$
appearing in \cite{XYZ}, which forms one of the crucial tools of this paper when describing the action of a tangle on tensor space.

\subsection{Diagram category of framed tangles}\label{xsec2.1}

The diagram category of framed tangles was introduced in \cite{FY89}, independently known as the category of
non-directed ribbon graphs \cite{RT}.

A {\em tangle} is a knot diagram inside a rectangle (smoothly embedded in $\mathbb{R}^3$) consisting of a finite number
of vertices in the top and the bottom row of the rectangle (the number of vertices in each row may differ)
and a finite number of arcs inside the rectangle such that each vertex is connected to another
vertex by exactly one arc, and arcs either connect two vertices or are closed curves.
Two tangles are {\em regularly isotopic} if they are equivalent by a sequence of the following
Reidemeister Moves II and III (RII and RIII for short) and the isotopies fixing the boundary of the rectangle
(or any rotation of them in a local portion of the rectangle).
	\begin{figure}[ht]
	\begin{equation*}
		\begin{array}{c}
			\begin{tikzpicture}[scale=0.8]
				\begin{knot}[
					clip width=7,
					%flip crossing=0,
					]
					\strand[thick] ((-0.4,2) .. controls (0.4,1.6) and (0.4,0.4) ..  (-0.4,0); 
					\strand[thick] (0.5,2) .. controls (-0.3,1.6) and (-0.3,0.4) ..  (0.5,0); 
				\end{knot}
				\node at (-1.8,1) { RII: };
			\end{tikzpicture}
		\end{array}
		~~=~~
		\begin{array}{c}
			\begin{tikzpicture}[scale=0.8]
				\begin{knot}[
					clip width=5,
					%	flip crossing=0,
					]
					\strand[thick] ((-0.4,2) .. controls (0.4,1.6) and (0.4,0.4) ..  (-0.4,0); 
					\strand[thick] (1.2,2) .. controls (0.4,1.6) and (0.4,0.4) ..  (1.2,0); 
				\end{knot}
			\end{tikzpicture}
		\end{array}
	\end{equation*}
\end{figure}

\begin{figure}[ht]
	\begin{equation*}
		\begin{array}{c}
			\begin{tikzpicture}[scale=0.8]
				\begin{knot}[
					clip width=7,
					%flip crossing=0,
					]
					\strand[thick] (-0.6,2.2) -- (0.6,0); 
					\strand[thick] (0.6,2.2) -- (-0.6,0);
					\strand[thick] (-0.8,1) .. controls (-0.6,2) and (0.6,2) ..  (0.8,1); 
				\end{knot}
				\node at (-1.8,1) { RIII: };
			\end{tikzpicture}
		\end{array}
		~~=~~
		\begin{array}{c}
			\begin{tikzpicture}[scale=0.8,rotate=180]
				\begin{knot}[
					clip width=7,
					%	flip crossing=0,
					]
					\strand[thick] (-0.8,1) .. controls (-0.6,2) and (0.6,2) ..  (0.8,1); 
					\strand[ thick] (-0.6,2.2) -- (0.6,0); 
					\strand[thick] (0.6,2.2) -- (-0.6,0);
					\flipcrossings{1,2,3}
				\end{knot}
			\end{tikzpicture}
		\end{array}
	\end{equation*}
\end{figure}

We denote by $T(s,t)$ the set of all tangles with $s$ vertices
in the top row and $t$ vertices in the bottom row subject to the relations of regular isotopy, named the modified Reidemeister Move I (RI).

\begin{figure}[ht]
	\begin{tikzpicture}[scale=0.6]
		\begin{knot}[clip width=10, flip crossing=2,clip radius=15pt, consider self intersections, end tolerance=3pt]
				\node at (-2.8,2) { RI: };
			\strand[thick] (0,0)
			to[out=up, in=down] (0,0.7) 
			to[out=up, in=right] (-0.5,1.5)
			to[out=left, in=up]  (-1,1)
			to[out=down, in=left]  (-0.5,0.5)
			to[out=right, in=down]  (0,1.3)
			to[out=up, in=down]  (0,2)
			%%%
			to[out=up, in=down] (0,2.7) 
			to[out=up, in=right] (-0.5,3.5)
			to[out=left, in=up]  (-1,3)
			to[out=down, in=left]  (-0.5,2.5)
			to[out=right, in=down]  (0,3.3)
			to[out=up, in=down]  (0,4);	
		\end{knot}
	\end{tikzpicture}~~~~
	\begin{tikzpicture}[scale=0.6]
		\begin{knot}[clip width=10, clip radius=15pt, consider self intersections, end tolerance=3pt]
			\strand[thick] (0,0)
			to[out=up, in=down]  (0,4);
		\end{knot}
		\node at (-0.7,2) {~~~~~~~~~~ \mbox{$=$} };
	\end{tikzpicture}
\end{figure}

There are two operations on tangles:
\begin{eqnarray}\label{eq2.1}
	\circ:& & T(s,t)\times T(t,l)\longrightarrow T(s,l),\\
	\otimes :& & T(s,t)\times T(l,m)\longrightarrow T(s+l,t+m).
\end{eqnarray}
The {\em composition} $\circ$ is defined by concatenation of tangles, reading
from up to down for our late convenience,
and the {\em tensor product} $\otimes$ is given by juxtaposition of tangles. More explicitly,
$D\otimes D'$ means placing $D'$ on the right of $D$ without overlapping.
In order to define the diagram category of framed tangles (see Definition \ref{dc of framed tangles} below,
which is \cite[Definition 2.5]{XYZ})
to be a strict monoidal category, we need to add
an element $\id_0$ in $T(0,0)$ and define the corresponding composition and tensor product as follows:
for any $f\in T(0,t),$ $g\in T(s,0)$ and $h\in  T(s,t),$
\begin{eqnarray}\label{eq2.4}
	&&\id_0\circ f=f,\ \  g\circ \id_0=g,\\
	&&h\otimes \id_0=\id_0\otimes h=h.
\end{eqnarray}

Let $K$ be an infinite field\footnote{Although the base field in \cite{XYZ} is the rational
function field $\mathbb{Q}(r,q)$ with $r,q$ being indeterminates, all the results introduced here
can keep invariant over an arbitrary infinite field $K$.}. Let $r,q\in K\backslash\{0,1\}$. For any
$s,t\in \mathbb{N}_0:=\{0,1,2,\ldots\}$, we denote by $\mathcal{D}(s,t)$ the $K$-linear span of $T(s,t)$
subject to the following relations, (see \cite[Definition 2.4]{XYZ}):

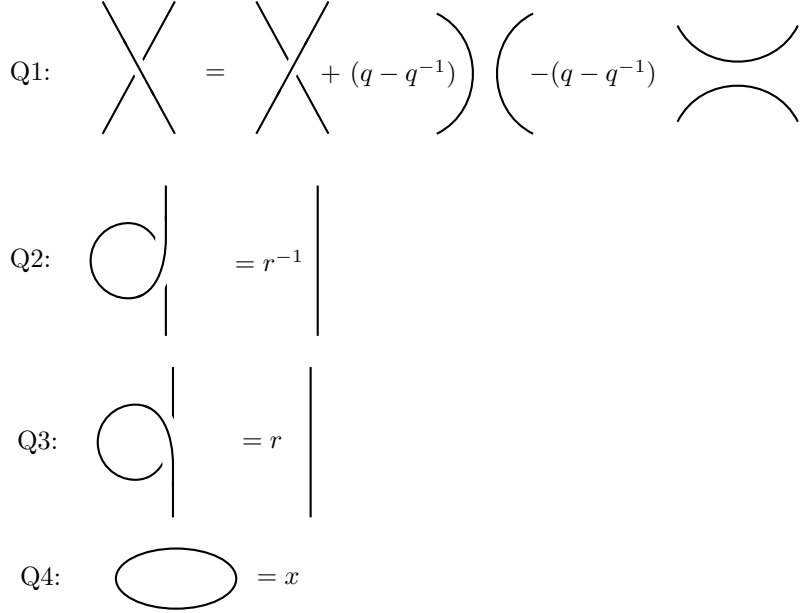
\begin{figure}[ht] 
	% 使用[t]确保放在下一页顶部
 % 强制左对齐	
	% Q1 部分
	\begin{equation*}
% 公式内左对齐
		\begin{array}{c}
			\begin{tikzpicture}[scale=0.8] 
				\begin{knot}[clip width=7]
					\strand[thick] (-0.6,2.2) -- (0.6,0); 
					\strand[thick] (0.6,2.2) -- (-0.6,0);
				\end{knot}
				\node at (-1.8,1) { Q1: };
			\end{tikzpicture}
		\end{array}
		~~=~~
		\begin{array}{c}
			\begin{tikzpicture}[scale=0.8]
				\begin{knot}[clip width=7, flip crossing=1]
					\strand[thick] (-3,2.2) -- (-1.8,0); 
					\strand[thick] (-1.8,2.2) -- (-3,0);
				\end{knot}
				\node at (-0.8,1) { $+~ (q-q^{-1}   )$ };
				\begin{knot}[clip width=5]
					\strand[thick] ((0,2) .. controls (0.8,1.6) and (0.8,0.4) ..  (0,0); 
					\strand[thick] (1.6,2) .. controls (0.8,1.6) and (0.8,0.4) ..  (1.6,0); 
				\end{knot}
				\node at (2.6,1) { $- (q-q^{-1}   )$ };
				\begin{knot}[clip width=5]
					\strand[thick] ((6,0.2) .. controls (5.6,1) and (4.4,1) ..  (4, 0.2); 
					\strand[thick] (6,1.8) .. controls (5.6,1) and (4.4,1) ..  (4,1.8); 
				\end{knot}
			\end{tikzpicture}
		\end{array}
	\end{equation*}
	
	\vspace{10pt} % 添加垂直间距
			\hspace*{-6.5cm}
	% Q2 部分
	% 防止缩进
	\begin{tikzpicture}[baseline=(current bounding box.center)]
		\begin{knot}[clip width=10, flip crossing=1, clip radius=15pt, consider self intersections, end tolerance=3pt]
			\strand[thick] (0,2)
			to[out=up, in=down] (0,2.7) 
			to[out=up, in=right] (-0.5,3.5)
			to[out=left, in=up]  (-1,3)
			to[out=down, in=left]  (-0.5,2.5)
			to[out=right, in=down]  (-0,3.3)
			to[out=up, in=down]  (0,4);    
		\end{knot}
		\node at (-1.8,3) { Q2: };
	\end{tikzpicture}%
	~~~~% 保持原有间距
	\begin{tikzpicture}[baseline=(current bounding box.center)]
		\begin{knot}[clip width=10, clip radius=15pt, consider self intersections, end tolerance=3pt]
			\strand[thick] (0,0)
			to[out=up, in=down]  (0,2);
		\end{knot}
		\node at (-0.7,1) {~~~~~~~~~~ {$= r^{-1} $} };
	\end{tikzpicture}
	
	\vspace{10pt} % 添加垂直间距
			\hspace*{-6.5cm}
	% Q3 部分
% 防止缩进
	\begin{tikzpicture}[baseline=(current bounding box.center)]
		\begin{knot}[clip width=10, flip crossing=2, clip radius=15pt, consider self intersections, end tolerance=3pt]
			\strand[thick] (0,0)
			to[out=up, in=down] (0,0.7) 
			to[out=up, in=right] (-0.5,1.5)
			to[out=left, in=up]  (-1,1)
			to[out=down, in=left]  (-0.5,0.5)
			to[out=right, in=down]  (-0,1.3)
			to[out=up, in=down]  (-0,2);
		\end{knot}
		\node at (-1.8,1) { Q3: };
	\end{tikzpicture}%
	~~~~% 保持原有间距
	\begin{tikzpicture}[baseline=(current bounding box.center)]
		\begin{knot}[clip width=10, clip radius=15pt, consider self intersections, end tolerance=3pt]
			\strand[thick] (0,0)
			to[out=up, in=down]  (0,2);
		\end{knot}
		\node at (-0.7,1) {~~~~~~~~~~ {$= r $} };
	\end{tikzpicture}
	
	\vspace{10pt} % 添加垂直间距
			\hspace*{-6.5cm}
	% Q4 部分
% 防止缩进
	\begin{tikzpicture}[baseline=(current bounding box.center)]
		\draw[thick]  (0,0) ellipse (0.8cm and 0.4cm);
		\node at (1.3,0) {~~~~~~~~~~ {$= x $} };
		\node at (-1.8,0) { Q4: };
	\end{tikzpicture}
\caption{Relations of regular isotopy}
\end{figure}

where $x=1+\frac{r-r^{-1}}{q-q^{-1}}.$ In other words, $\mathcal{D}(s,t)$ is linearly spanned by
the set of all tangles with $s$ vertices
in the top row and $t$ vertices in the bottom row subject to the relations of regular isotopy and the relations
$Q1-Q4$ (see Figure 1.).

\begin{definition}\label{dc of framed tangles}
The {\em diagram category of framed tangles} with parameters $r,q$, denoted $\mathbb{D}(r,q)$,
is a strict monoidal category with objects $\mathbb{N}_0=\{0,1,2,\ldots\}$ and morphisms
$\Hom_{\mathbb{D}(r,q)}(s,t)=\mathcal{D}(s,t)$. The tensor product on objects is defined by $m\otimes n:=m+n$,
and the composition $\circ$ and tensor product $\otimes$
on morphisms are inherited naturally from those of tangles.
\end{definition}

We here use the terminologies related to monoidal categories without interpretation, and
refer the reader to the monograph \cite{EGNO} for the precise meanings if needed.
The following five tangles:
\begin{center}
	\begin{picture}(280, 40)(-5,0)
		\put(0, 0){\line(0, 1){40}}
		\put(5, 0){,}
		
		% \put(40, 0){\line(1, 2){8}}
		\qbezier(40,0)(44,8)(48,16)
		
		% \put(60, 0){\line(-1, 2){20}}
		\qbezier(60,0)(50,20)(40,40)
		
		% \put(60, 40){\line(-1, -2){8}}
		\qbezier(60,40)(56,32)(52,24)
		
		\put(65, 0){,}
		
		% \put(100, 40){\line(1, -2){8}}
		\qbezier(100,40)(104,32)(108,24)
		
		% \put(100, 0){\line(1, 2){20}}
		\qbezier(100,0)(110,20)(120,40)
		
		% \put(120, 0){\line(-1, 2){8}}
		\qbezier(120,0)(116,8)(112,16)
		
		\put(125, 0){,}
		
		\qbezier(160, 0)(175, 60)(190, 0)
		\put(195, 0){,}
		\qbezier(230, 30)(245, -30)(260, 30)
		\put(265, 0){,}
	\end{picture}
\end{center}

are called the {\em elementary} tangles and denoted by $I, X, X^{op}, A, U$
respectively. The category
$\mathbb{D}(r,q)$ has a contravariant functor $^*:\mathbb{D}(r,q)\to \mathbb{D}(r,q)$
such that $n^*=n$ for any object $n\in\mathbb{N}$. In order to get a clear picture of morphisms
under the functor $^*$, we define the following useful tangles. Let $U_n: n\otimes n\to 0$ and $A_n: 0\to n\otimes n$
be tangles defined by
\begin{eqnarray}\label{eq2.5}
U_n&=&(I^{\otimes (n-1)}\otimes U\otimes I^{\otimes(n-1)})\circ\cdots\circ (I\otimes U\otimes I)\circ U,\\
A_n&=&A\circ(I\otimes A\otimes I)\circ\cdots\circ (I^{\otimes (n-1)}\otimes A\otimes I^{\otimes (n-1)}).
\end{eqnarray}
These are depicted as follows
\begin{center}
\begin{picture}(250, 40)(-5,0)
\put(0, 10){$U_n=$}
\qbezier(25, 30)(55, -35)(85, 30)
\put(32, 25){\tiny$n$}
\put(32, 20){...}
\qbezier(40, 30)(55, -15)(70, 30)
\put(75, 0){,}

\put(125, 10){$A_n=$}
\qbezier(150, 0)(180, 60)(210, 0)
\put(158, 10){...}
\put(158, 0){\tiny$n$}
\qbezier(165, 0)(180, 40)(195, 0)
\put(215, 0){.}

\end{picture}
\end{center}
Let $I_n:=I^{\otimes n}$ be the identity morphism $\id_n$. The linear mapping
$^*: \mathcal{D}(s,t)\to \mathcal{D}(t,s)$
defined for any tangle $D\in \mathcal{D}(s,t)$ by $D^*:=(I_t\otimes A_s)\circ
(I_t\otimes D\otimes I_s)\circ (U_t\otimes I_s)$ can be described as follows
%\begin{figure}[h]
\begin{center}
\begin{picture}(100, 60)(-20,0)
\put(-5, 20){\line(1, 0){35}}
\put(-5, 20){\line(0, 1){20}}
\put(30, 20){\line(0, 1){20}}
\put(-5, 40){\line(1, 0){35}}
\put(8, 25){$D$}

\qbezier(5, 40)(40, 95)(60, 0)
\qbezier(20, 40)(35, 70)(45, 0)
\put(47, 10){...}

\qbezier(5, 20)(-15, -10)(-20, 60)
\qbezier(20, 20)(-20, -35)(-35, 60)
\put(-30, 50){...}
\put(65, 0){.}
\end{picture}
\end{center}
%\caption{$D^*$}
%\end{figure}
Then the contravariant functor $^*$ gives rise to the duality of
$\mathbbm{D}(r,q)$ with evaluation $U$ and coevaluation $A$. It follows from
\cite[Proposition 2.6 and Theorem 2.7]{XYZ} that the strict monoidal category $\mathbb{D}(r,q)$ is pivotal
and any morphism of $\mathbb{D}(r,q)$ is generated by four elementary tangles
$I, X, A, U$ through linear combination, composition and tensor product.

For any object $n>0$, the set of morphisms $\mathcal{D}(n,n)$ forms a unital
associative $K$-algebra under the composition of tangles. This is the
Kauffman's tangle algebra \cite{Ka} which is isomorphic to a BMW algebra.

\begin{definition}{\rm (\cite{BW,Mu})} \label{def BMW}
The generic \emph{BMW algebra} $\mathfrak{B}_n(r,q)$ is
a unital associative $K$-algebra generated by the
elements $T_i^{\pm1}$ and $E_i$ for $1\leq i\leq n-1$ subject to the
relations:
  \begin{align*}
    T_i-T_i^{-1}&=(q-q^{-1})(1-E_i), &\quad&\text{ for } 1\leq i\leq n-1,\\
    E_i^2&=x E_i, &&\text{ for } 1\leq i\leq n-1,\\
    T_iT_{i+1} T_i&=T_{i+1} T_iT_{i+1}, &&\text{ for } 1\leq i\leq n-2,\\
    T_iT_j&=T_jT_i, &&\text{ for }|i-j|>1,\\
    E_iE_{i+1} E_i&=E_i,\;  E_{i+1}E_i E_{i+1}=E_{i+1},
    &&\text{ for } 1\leq i\leq n-2,\\
    T_iT_{i+1}E_i&=E_{i+1} E_i,\; T_{i+1}T_iE_{i+1}=E_iE_{i+1}, &&\text{
      for } 1\leq i\leq n-2,\\
    E_iT_i&=T_iE_i=r^{-1}E_i &&\text{ for } 1\leq i\leq n-1,\\
    E_iT_{i+1}E_i&=rE_i,\;E_{i+1}T_iE_{i+1}=rE_{i+1}, &&\text{ for }
    1\leq i\leq n-2,
      \end{align*}
where $x=1+\dfrac{r-r^{-1}}{q-q^{-1}}$.
\end{definition}

By the following well-known isomorphism (see \cite{MW}) between the Kauffman's tangle algebra $\mathcal{D}(n,n)$
and the generic BMW algebra $\mathfrak{B}_n(r,q)$, we can change freely between algebraic
formulations and tangle diagrams.
%{\rm (see \cite[Theorem 2.9]{XYZ})}
\begin{theorem}  \label{iso between tangle and BMW}
The Kauffman's tangle algebra $\mathcal{D}(n,n)$ is isomorphic to the BMW algebra $\mathfrak{B}_n(r,q)$
with an isomorphism given by
$$
T_i\mapsto I_{i-1}\otimes X\otimes I_{n-1-i},\ \ E_i\mapsto I_{i-1}\otimes (U\circ A)\otimes I_{n-1-i}.
$$
\end{theorem}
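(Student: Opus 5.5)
The plan is to build an algebra homomorphism $\Phi:\mathfrak{B}_n(r,q)\to\mathcal{D}(n,n)$ on generators and then show it is bijective. I would send $T_i\mapsto I_{i-1}\otimes X\otimes I_{n-1-i}$, $T_i^{-1}\mapsto I_{i-1}\otimes X^{op}\otimes I_{n-1-i}$ and $E_i\mapsto I_{i-1}\otimes(U\circ A)\otimes I_{n-1-i}$, and check each defining relation of Definition~\ref{def BMW} as an identity of tangles in $\mathcal{D}(n,n)$.

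\textbf{Checking the relations.}
\begin{itemize}
\item The braid relations and the far commutation $T_iT_j=T_jT_i$ come from RIII and planar isotopy.
\item The image of $T_i^{-1}$ is really the inverse of the image of $T_i$ by RII.
\item $T_i-T_i^{-1}=(q-q^{-1})(1-E_i)$ is exactly Q1.
\item $E_i^2=xE_i$ is Q4, since a closed loop splits off.
\item $E_iE_{i\pm1}E_i=E_i$ is the zig-zag (snake) isotopy.
\item $T_iE_i=E_iT_i=r^{-1}E_i$ follows by sliding the crossing onto the cap or cup to make a curl, then applying Q2 or Q3. One has to keep track of which kink gives $r$ and which gives $r^{-1}$.
\item $E_iT_{i\pm1}E_i=rE_i$ also reduces to a single curl after isotopy, and Q3 finishes it.
\item $T_iT_{i+1}E_i=E_{i+1}E_i$ is slide-through-a-cap isotopy. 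It uses RII/RIII and isotopy, but no curl appears.
\end{itemize}
This step is routine, and the only care needed is with the sign conventions for the curls.

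\textbf{Surjectivity.} By the generation result quoted from \cite{XYZ}, every tangle in $\mathcal{D}(n,n)$ is a composite of tensor products of $I,X,X^{op},A,U$. A standard argument then puts any $(n,n)$ tangle, up to regular isotopy and Q1--Q4, into a product of the images of the generators. Closed components are removed using Q1 to unknot and then Q2--Q4. Each remaining open diagram is isotoped so that its caps and cups sit in layers of the form $E_i$, with crossings in between. Hence $\Phi$ is onto.

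\textbf{Injectivity (main obstacle).} It is classical, by Morton--Wassermann \cite{MW}, that both algebras have dimension $(2n-1)!!$ over a suitable ground ring or field.
\begin{itemize}
\item For $\mathfrak{B}_n(r,q)$, a spanning set of size $(2n-1)!!$ is produced by reducing words to a normal form indexed by Brauer diagrams.
\item For $\mathcal{D}(n,n)$, linear independence of the $(2n-1)!!$ ``totally descending'' tangles, one per Brauer diagram, is needed. This is the hard step, since it requires a faithful invariant such as the Kauffman polynomial, or a representation on which these tangles act independently.
\end{itemize}
I would simply cite \cite{MW} for this dimension count rather than reprove it. Spanning on the algebraic side, together with surjectivity of $\Phi$ and independence on the diagram side, then forces $\Phi$ to be an isomorphism, and the stated map is $\Phi$.
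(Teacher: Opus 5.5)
Your route is the paper's route: the paper gives no argument of its own and simply quotes Morton--Wassermann \cite{MW}. Your check that the images of $T_i^{\pm1}$ and $E_i$ satisfy the BMW relations is correct and routine.

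\textbf{The gap is in your surjectivity paragraph.} Regular isotopy cannot bring an arbitrary $(n,n)$-tangle without closed components into layers of $E_i$'s separated by crossings, because diagrams of words are very special. In $\mathcal{D}(2,2)$, each occurrence of $U\circ A$ in a word cuts its diagram into horizontal bands. So the diagram of any word in $T_1^{\pm1},E_1$ is either a $2$-braid, or a stack of a twisted cup, some twisted circles and a twisted cap. In either case every component is unknotted.

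Now let $K\in T(1,1)$ be a single strand with a trefoil tied in it, and consider $K\otimes I$. It has no closed components, yet it is not regularly isotopic to the diagram of any word, since RII, RIII, the modified RI and planar isotopy preserve the knot type of each component. It lies in the image of $\Phi$ only because the skein relation Q1 reduces it to a scalar multiple of $I_2$. So Q1 must be applied to crossings of all components, not just used to unknot closed ones. Closed components also have to be unlinked from the arcs before Q4 applies.

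\textbf{The standard repair} is an induction on the number of crossings. By Q1, any crossing can be switched at the cost of diagrams with fewer crossings. So, after fixing an order of the components and a starting point on each, every tangle is congruent modulo lower terms to a totally descending one. In such a tangle the closed components are split unknots and the arcs are stacked unknotted strands. Hence it is ambient isotopic to a standard representative of its Brauer connectivity together with some trivial circles. In $\mathcal{D}(n,n)$ it therefore equals that representative times a power of $r$ (from Q2, Q3) and a power of $x$ (from Q4). For suitable choices, each representative is the diagram of an explicit word in the $T_i,E_i$.

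This is exactly the spanning half of \cite{MW}, so you may as well cite it there too; that is how the paper handles the whole theorem. Once surjectivity is secured this way, your counting argument closes. Do make the ground ring explicit: \cite{MW} works with generic parameters. Both $\mathfrak{B}_n(r,q)$ and $\mathcal{D}(n,n)$ are presented by generators and relations, so the isomorphism and the rank $(2n-1)!!$ pass by base change to any $r,q\in K$ with $q\neq\pm1$.
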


\vspace{6pt}
\subsection{Type $C$ Reshetikhin--Turaev functor}\label{xsec2.2}

Let $K$ be an infinite field and $q\in K\backslash\{0,1\}$. Let $V$ be the natural
representation of the quantized enveloping algebra $U_{q}(\mathfrak{sp}_{2m})$ (see the Introduction of this paper).
For each integer $i$ with
$1\leq i\leq 2m$, set $i':=2m+1-i$. We fix an ordered basis
$\{v_i\}_{i=1}^{2m}$ of $V$ such that
$$
\langle v_i,v_j\rangle=0=\langle v_{i'},v_{j'}\rangle,\quad
\langle v_i,v_{j'}\rangle=\delta_{ij}=-\langle v_{j'},v_i\rangle,\ \ \forall 1\leq i,j\leq m.
$$
We set
$$
(\rho_1,\cdots,\rho_{2m}):=(m,m-1,\cdots,1,-1,\cdots,-m+1,-m),
$$
and $\epsilon_i:=\sign(\rho_i)$. For any $i,j\in\{1,2,\cdots,2m\}$, we set $E_{i,j}\in\End_{K}(V)$ to be
the elementary matrix whose entries are all zero except $1$ for the $(i,j)$-th entry.
Let us define (see \cite[Section 3]{Hu11} for the same notations)
$$\begin{aligned} \beta' &:=\sum_{1\leq i\leq
2m}\Bigl(qE_{i,i}\otimes E_{i,i}+q^{-1}E_{i,i'}\otimes
E_{i',i}\Bigr)+\sum_{\substack{1\leq
i,j\leq 2m\\ i\neq j,j'}} E_{i,j}\otimes E_{j,i}+\\
&\qquad\qquad (q-q^{-1})\sum_{1\leq i<j\leq 2m}\Bigl(E_{i,i}\otimes
E_{j,j}-q^{\rho_j-\rho_i}\epsilon_i\epsilon_j E_{i,j'}\otimes
E_{i',j}\Bigr),\\
\gamma' &:=\sum_{1\leq i,j\leq
2m}q^{\rho_j-\rho_i}\epsilon_i\epsilon_j E_{i,j'}\otimes E_{i',j}.
\end{aligned}
$$
For $i=1,2,\ldots,n-1$, we set
$$
\beta'_i:=\id_{V^{\otimes i-1}}\otimes \beta'\otimes \id_{V^{\otimes n-i-1}},\quad
\gamma'_i:=\id_{V^{\otimes i-1}}\otimes \gamma'\otimes \id_{V^{\otimes n-i-1}}.
$$
Then the mapping which sends the generator
$T_i$ to $\beta'_i$ and $E_i$ to $\gamma'_i$ can be naturally extended to a right action
of the specialized BMW algebra $\mathfrak{B}_{n,K}$ on tensor space $V^{\otimes n}$,
which commutes with the left action of $U_q(\mathfrak{sp}_{2m})$ on $V^{\otimes n}$.

In order to formulate the type $C$ Reshetikhin--Turaev functor (RT-functor for short) explicitly \cite{RT},
we define the following $K$-linear maps:
$$\begin{aligned}
\check{R}: V\otimes V\longrightarrow V\otimes V,&\quad\quad v\otimes w\mapsto \beta'(v\otimes w),\\
C: K\longrightarrow V\otimes V,&\quad\quad 1\mapsto \alpha:=\sum_{1\leq k\leq
2m}q^{-\rho_k}\epsilon_k v_k\otimes v_{k'},\\
E: V\otimes V\longrightarrow K,&\quad\quad v_i\otimes v_j\mapsto q^{-\rho_i}\epsilon_j(v_i,v_j).&
\end{aligned}
$$
Clearly $\gamma'=E\circ C$, where the composition reads from left to right,
because of the {\em right} action of $\mathfrak{B}_{n,K}$ on $V^{\otimes n}$.

\begin{definition}\label{category of tensor rep}
We denote by $\mathcal{T}(V)$ the full subcategory of $U_q(\mathfrak{sp}_{2m})$-modules
with objects $V^{\otimes n}$, where $n\in\mathbb{N}_0$ and $V^{\otimes 0}=K$
by convention. The usual tensor product of $U_q(\mathfrak{sp}_{2m})$-modules and of
$U_q(\mathfrak{sp}_{2m})$-homomorphisms is a bi-functor $\mathcal{T}(V)\times \mathcal{T}(V)
\to \mathcal{T}(V)$, which will be called the tensor product of this category. We call
$\mathcal{T}(V)$ the {\em category of tensor representations} of $U_q(\mathfrak{sp}_{2m})$.
\end{definition}

Since $V\cong V^*$ as $U_q(\mathfrak{sp}_{2m})$-modules, the category $\mathcal{T}(V)$
is also a pivotal strict monoidal category with the evaluation $E$ and the coevaluation $C$.
Under the duality decided by $E$ and $C$, $V$ and $V^*$ are the same object in the category $\mathcal{T}(V)$.

Let $\mathbb{D}(\mathfrak{sp}_{2m})$ be the specialized diagram category of framed tangles $\mathbb{D}(r,q)$ with
$r=-q^{2m+1}$. Then we have

\begin{theorem}\label{RT-functor}
{\rm (\cite[Theorem 3.3]{XYZ})}
There exists a unique additive covariant functor $F: \mathbb{D}(\mathfrak{sp}_{2m})\rightarrow
\mathcal{T}(V)$ satisfying the following properties:

\begin{enumerate}
\item[(1)] $F$ sends the object $n$ to $V^{\otimes n}$ and the morphism $D: s\to t$ to $F(D):
V^{\otimes s}\to V^{\otimes t}$, where $F(D)$ is defined on the generators of tangles by
%\begin{eqnarray}\label{FR-functor}
$$\begin{aligned}
F\left(
\begin{picture}(30, 20)(0,0)
\put(15, -15){\line(0, 1){35}}
\end{picture}\right)=\id_V,
\quad\quad&
F\left(
\begin{picture}(30, 20)(0,0)
	% \put(5, -15){\line(1, 2){8}}
	\qbezier(5,-15)(9,-7)(13,1)
	
	% \put(25, -15){\line(-1, 2){18}}
	\qbezier(25,-15)(16,3)(7,21)
	
	% \put(24, 21){\line(-1, -2){7}}
	\qbezier(24,21)(20.5,14)(17,7)
\end{picture}\right) = \check{R}, \\
F\left(
\begin{picture}(30, 20)(0,0)
\qbezier(5, -15)(15, 50)(25, -15)
\end{picture}\right) = C,
\quad\quad&
F\left(
\begin{picture}(30, 20)(0,0)
\qbezier(5, 20)(15, -50)(25, 20)
\end{picture}\right) = E.
\end{aligned}$$
%\end{eqnarray}
\item[(2)] $F$ is a pivotal monoidal functor, i.e. $F$ preserves the tensor products and the dualities.
\end{enumerate}
\end{theorem}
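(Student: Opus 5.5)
The uniqueness part is immediate from the structure of $\mathbb{D}(\mathfrak{sp}_{2m})$ recalled above. By \cite[Proposition 2.6 and Theorem 2.7]{XYZ}, every morphism of $\mathbb{D}(r,q)$ is obtained from the four elementary tangles $I,X,A,U$ by linear combinations, compositions and tensor products. An additive monoidal functor with the prescribed values on $I,X,A,U$ is therefore determined on all morphisms, and on objects it is forced by $F(n)=V^{\otimes n}$. So the whole content is existence. For that I would use a presentation of the category of framed unoriented tangles (Freyd--Yetter \cite{FY89}, Turaev, Reshetikhin--Turaev \cite{RT}).

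\textbf{Step 1: reduce to a finite list of relations.} Let $\mathcal{F}$ be the free strict monoidal $K$-linear category generated by one object $1$ and morphisms $X,X^{op}:2\to2$, $A:0\to2$, $U:2\to0$. Then $\mathbb{D}(r,q)$ is the quotient of $\mathcal{F}$ by:
\begin{enumerate}
\item[(a)] the zigzag (snake) identities for $A,U$;
\item[(b)] invertibility of $X$ with inverse $X^{op}$ (RII);
\item[(c)] the braid relation (RIII);
\item[(d)] the ``sliding'' relations that move a strand through a cap or cup, i.e. that $X$ is natural with respect to $A,U$, together with the rotation of crossings $X^*=X$;
\item[(e)] the relations Q1--Q4.
\end{enumerate}
Defining $F$ on generators by $\id_V,\check R,\check R^{-1},C,E$ gives a functor on $\mathcal{F}$. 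It then suffices to check that each relation (a)--(e) holds for the explicit matrices $\beta',\gamma'$ with $r=-q^{2m+1}$.

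\textbf{Step 2: the local identities.}
\begin{enumerate}
\item[(a)] The zigzag identity $(C\otimes\id_V)(\id_V\otimes E)=\id_V$ (and its mirror) is a one-line computation on the basis $v_j$. It uses $\langle v_i,v_{j'}\rangle=\delta_{ij}$ and the cancellation of the factors $q^{-\rho_k}$ against $q^{-\rho_i}$.
\item[(b), (c)] Invertibility and the Yang--Baxter equation for $\check R=\beta'$ are standard: $\beta'$ is Jimbo's $R$-matrix for the vector representation.
\item[(e) Q1] This is exactly the skein relation $\beta'-\beta'^{-1}=(q-q^{-1})(1-\gamma')$. I would verify it by computing $\beta'^{-1}$ explicitly from the given formula, using $\gamma'=E\circ C$.
\item[(e) Q4] This is the scalar $E(\alpha)=\sum_k q^{-2\rho_k}\epsilon_k\epsilon_{k'}\langle v_k,v_{k'}\rangle$. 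With $\epsilon_{k'}=-\epsilon_k$ and the sign of the form, it should equal $1-\frac{q^{2m+1}-q^{-2m-1}}{q-q^{-1}}$, which is $x$ at $r=-q^{2m+1}$.
\item[(e) Q2, Q3] These are the kink values. They amount to computing $(C\otimes\id_V)(\id_V\otimes\check R^{\pm1})(E\otimes\id_V)$ restricted to a single basis vector. The answer should be $r^{\mp1}\id_V$, which is the BMW relation $E_iT_i=r^{-1}E_i$ for Hu's action \cite{Hu11}.
\end{enumerate}

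\textbf{Step 3: the main obstacle, the sliding relations (d).} These require checks such as $(\id_V\otimes C)(\check R\otimes\id_V)=(C\otimes\id_V)(\id_V\otimes\check R^{-1})$ as maps $V\to V^{\otimes3}$. A direct matrix computation here is messy.

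I would instead convert them to BMW relations on $V^{\otimes3}$. Tensoring with $\id_V$ and composing with $E$ turns them into the identities $T_iT_{i+1}E_i=E_{i+1}E_i$ and $T_{i+1}T_iE_{i+1}=E_iE_{i+1}$. These are known to hold for $\beta'_i,\gamma'_i$, since the map $T_i\mapsto\beta'_i$, $E_i\mapsto\gamma'_i$ is an action of $\mathfrak{B}_{n,K}$ \cite{Hu11}. The zigzag identity from Step 2 then lets one cancel the extra cup and cap, recovering the sliding relation itself.

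Alternatively, one can argue that both sides lie in $\Hom_{U_q(\mathfrak{sp}_{2m})}(V,V^{\otimes3})$, since $C$, $E$ and $\check R$ are module maps, and compare them on a highest weight vector.

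\textbf{Step 4: pivotality.} Once $F$ is well defined, it is monoidal by construction, and it sends $U\mapsto E$, $A\mapsto C$. Since the duality on $\mathbb{D}(\mathfrak{sp}_{2m})$ is defined through $U$ and $A$, and that on $\mathcal{T}(V)$ through $E$ and $C$, $F(D^*)=F(D)^*$ follows by applying $F$ to the defining formula for $D^*$. This gives (2).
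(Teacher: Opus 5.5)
The paper gives no argument for this statement; it is quoted from \cite[Theorem 3.3]{XYZ}. Your route is the standard proof of such a result: a Turaev/Freyd--Yetter presentation of framed tangles, followed by checking the relations on $\check R$, $C$, $E$. Its structure is sound.

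Uniqueness, Step 1 and Step 4 are fine. Your list omits the modified RI move, but in the linear quotient it follows from Q2 and Q3, so the presentation is still correct.

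Your Step 3 device is stronger than you use it. The map $E\otimes\id_V\colon V^{\otimes 3}\to V$ is surjective and $C\otimes\id_V\colon V\to V^{\otimes 3}$ is injective. Composing with them on the input and output side, and using the zigzag identities and $\gamma'=E\circ C$, turns every relation in (b)--(e) into a relation of the BMW action of \cite{Hu11}:
\begin{enumerate}
\item[(i)] the curl values come from $E_1T_2E_1=rE_1$ and $E_1T_2^{-1}E_1=r^{-1}E_1$;
\item[(ii)] the loop value comes from $E_1^2=xE_1$, since $\gamma'^2=E(\alpha)\gamma'$;
\item[(iii)] the sliding relations come from $T_iT_{i+1}E_i=E_{i+1}E_i$, $T_{i+1}T_iE_{i+1}=E_iE_{i+1}$ and their images under the anti-involution of $\mathfrak{B}_n$, e.g.\ $E_2T_1T_2=E_2E_1$.
\end{enumerate}
So the only identities you must compute by hand are the zigzag identities. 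You should also record that $\check R$, $C$, $E$ are $U_q(\mathfrak{sp}_{2m})$-homomorphisms, otherwise $F$ does not land in $\mathcal{T}(V)$. For $E$ this follows from $\gamma'$ commuting with the action and $C$ being an injective module map onto the trivial submodule $K\alpha$.

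That hand computation is where your sketch fails as written. You compute with the symplectic form, using $\langle v_i,v_{j'}\rangle=\delta_{ij}$ together with skew-symmetry. Then $\langle v_a,v_b\rangle=\epsilon_a\delta_{b,a'}$ and $E(v_a\otimes v_b)=-q^{-\rho_a}\delta_{b,a'}$. With this reading:
\begin{enumerate}
\item[(i)] $(C\otimes\id_V)(\id_V\otimes E)$ sends $v_j$ to $-\epsilon_j v_j$, so the zigzag identity fails;
\item[(ii)] $E(\alpha)=\sum_{i=1}^m(q^{2i}-q^{-2i})$, which is not $x=-\sum_{i=1}^m(q^{2i}+q^{-2i})$;
\item[(iii)] $E\circ C$ differs from $\gamma'$ by the factor $\epsilon_i$ on $v_i\otimes v_{i'}$.
\end{enumerate}
The pairing $(v_i,v_j)$ in the definition of $E$ must be the symmetric one, $(v_i,v_j)=\delta_{i,j'}$; this is exactly what $\gamma'=E\circ C$ forces. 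With it, both zigzag maps are the identity, and $E(\alpha)=-\sum_k q^{-2\rho_k}=1-\frac{q^{2m+1}-q^{-2m-1}}{q-q^{-1}}$, as required. With this correction your argument goes through.
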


The functor $F$ in Theorem \ref{RT-functor} is called the type $C$ RT-functor.

\begin{lemma}\label{tech lemma}
{\rm (\cite[Lemma 3.4]{XYZ})}
Let $H(s,t):=\Hom_{U_q(\mathfrak{sp}_{2m})}(V^{\otimes s},V^{\otimes t})$ for all $s,t\in\mathbb{N}_0$.
Define the linear maps
$$
\mathrm{U}_s^t:=(-\otimes I_t)\circ (I_s\otimes U_t): \mathcal{D}(n,s+t)\longrightarrow \mathcal{D}(n+t,s),
$$
$$
\mathrm{A}_t^n:=(I_{n}\otimes A_t)\circ (-\otimes I_t): \mathcal{D}(n+t,s)\longrightarrow \mathcal{D}(n,s+t).
$$
Then we have:

\begin{enumerate}
\item[(1)] the $K$-linear maps
$$
F\mathrm{U}_s^t:=(-\otimes \id^{\otimes t}_V)(\id^{\otimes s}_V\otimes F(U_t)): H(n,s+t)\longrightarrow H(n+t,s),
$$
$$
F\mathrm{A}_t^n:=(\id^{\otimes n}_V\otimes F(A_t))(-\otimes \id^{\otimes t}_V): H(n+t,s)\longrightarrow H(n,s+t)
$$
are well-defined and are mutually inverses of each other;

\item[(2)] the functor $F$ induces a linear map
$$
F^s_t: \mathcal{D}(s,t)\longrightarrow H(s,t)=\Hom_{U_q(\mathfrak{sp}_{2m})}(V^{\otimes s},V^{\otimes t}),\quad
D\mapsto F(D),
$$
and the following diagrams are commutative:
\[\xymatrix@C=1.0cm{ \mathcal{D}(n+t,s)\ar[r]^{\mathrm{A}_t^n}
\ar[d]_{F_s^{n+t}}& \mathcal{D}(n,s+t)\ar[d]^{F_{s+t}^n}& \\
  H(n+t,s)\ar[r]^{F\mathrm{A}_t^n} &H(n,s+t),}
    \xymatrix@C=1.0cm{ \mathcal{D}(n,s+t)\ar[r]^{\mathrm{U}_s^t}
\ar[d]_{F_{s+t}^n}& \mathcal{D}(n+t,s)\ar[d]^{F_s^{n+t}}& \\
  H(n,s+t)\ar[r]^{F\mathrm{U}_s^t} &H(n+t,s).}\]
\end{enumerate}
\end{lemma}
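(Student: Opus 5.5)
We need to prove Lemma \ref{tech lemma}: the maps $F\mathrm{U}_s^t$ and $F\mathrm{A}_t^n$ are well defined on $U_q(\mathfrak{sp}_{2m})$-homomorphism spaces and are mutually inverse, and the two squares relating them to $\mathrm{A}_t^n$, $\mathrm{U}_s^t$ commute. The plan is to reduce everything to the zigzag (snake) identities for $E$ and $C$ in $\mathcal{T}(V)$, together with the monoidality of $F$ from Theorem \ref{RT-functor}.

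\textbf{Well-definedness.} Both $F(U_t)\colon V^{\otimes 2t}\to K$ and $F(A_t)\colon K\to V^{\otimes 2t}$ are $U_q(\mathfrak{sp}_{2m})$-homomorphisms. Indeed, by Theorem \ref{RT-functor} each is a composite of tensor products of $E$, $C$ and identities, and $E$, $C$ are module maps because they are the evaluation and coevaluation of the pivotal structure on $\mathcal{T}(V)$. Moreover, the tensor product of module homomorphisms is a module homomorphism, since $U_q$ acts on tensor products through the coproduct. Hence for $g\in H(n,s+t)$, the map $(g\otimes \id_V^{\otimes t})(\id_V^{\otimes s}\otimes F(U_t))$ lies in $H(n+t,s)$, and similarly for $F\mathrm{A}_t^n$.

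\textbf{Mutual inverses.} First I check the base case $t=1$. There $F\mathrm{A}\circ F\mathrm{U}(g)=(\id^{\otimes n}\otimes C)(g\otimes\id\otimes\id)(\id^{\otimes s}\otimes E\otimes \id)$ (composition read left to right). By the interchange law this equals $g\circ(\id^{\otimes s}\otimes[(\id\otimes C)(E\otimes\id)])$, which reduces to showing $(\id_V\otimes C)\circ(E\otimes \id_V)=\id_V$ in the paper's order. The other order works the same way. This is a direct computation with $\alpha=\sum_k q^{-\rho_k}\epsilon_k v_k\otimes v_{k'}$ and $E(v_i\otimes v_j)=q^{-\rho_i}\epsilon_j\langle v_i,v_j\rangle$. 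The only issue is checking that the powers $q^{\pm\rho}$ and the signs $\epsilon$ cancel. For instance, the coefficient on $v_j$ is $q^{-\rho_k}\epsilon_k\cdot q^{-\rho_{k'}}\epsilon_{j}\langle v_{k'},v_j\rangle$ with $k'=j'$, and using $\rho_{k'}=-\rho_k$ together with the sign pattern of $\langle\ ,\ \rangle$ this should reduce to $1$. Alternatively, I can simply invoke that $F$ preserves dualities (Theorem \ref{RT-functor}(2)) and that the snake relations already hold for $A,U$ in $\mathbb{D}(r,q)$ by regular isotopy.

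For general $t$, the nested forms of $U_t$ and $A_t$ in \eqref{eq2.5} show that the $t$-fold zigzag is obtained by iterating the $t=1$ zigzag $t$ times. Concretely, the identity $(I_t\otimes A_t)\circ(U_t\otimes I_t)=I_t$ holds already in $\mathcal{D}$ by isotopy, and applying $F$ gives the corresponding identity in $\mathcal{T}(V)$. With this, the computation of the previous paragraph goes through verbatim with $\id^{\otimes t}$ in place of $\id$, giving $F\mathrm{A}_t^n\circ F\mathrm{U}_s^t=\id$ and $F\mathrm{U}_s^t\circ F\mathrm{A}_t^n=\id$.

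\textbf{Part (2).} $F^s_t$ is well defined because $F$ is a functor on $\mathbb{D}(\mathfrak{sp}_{2m})$, so it respects the relations $Q1$–$Q4$ and isotopy, and $F(D)$ is a module map by the well-definedness argument above. The commutativity of the two squares is then immediate from $F$ being a strict monoidal functor:
$$F\big((I_n\otimes A_t)\circ(D\otimes I_t)\big)=(\id^{\otimes n}\otimes F(A_t))(F(D)\otimes \id^{\otimes t}),$$
and similarly for $\mathrm{U}_s^t$.

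\textbf{Main obstacle.} The only nontrivial point is the explicit snake identity for $C$ and $E$, that is, the cancellation of the $q^{-\rho}$ factors and the signs $\epsilon$. I would bypass this computation by citing Theorem \ref{RT-functor}(2), namely that $F$ preserves the dualities.
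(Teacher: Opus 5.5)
The paper gives no proof of this lemma; it is quoted from \cite[Lemma 3.4]{XYZ}. Your argument is the standard one for such statements and is correct. The interchange law reduces $F\mathrm{A}_t^n\circ F\mathrm{U}_s^t$ and $F\mathrm{U}_s^t\circ F\mathrm{A}_t^n$ to the two zigzag identities $(\id_V^{\otimes t}\otimes F(A_t))(F(U_t)\otimes \id_V^{\otimes t})=\id_V^{\otimes t}=(F(A_t)\otimes\id_V^{\otimes t})(\id_V^{\otimes t}\otimes F(U_t))$. These follow by applying the functor $F$ of Theorem~\ref{RT-functor} to the planar isotopies $(I_t\otimes A_t)\circ(U_t\otimes I_t)=I_t=(A_t\otimes I_t)\circ(I_t\otimes U_t)$ in $\mathbb{D}(\mathfrak{sp}_{2m})$. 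Part (2) is then just strict monoidality of $F$.

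One correction concerns the hand computation you offer as an alternative. If the pairing in the formula for $E$ is taken to be the skew form $\langle\ ,\ \rangle$, as you write it, then the coefficient you display equals $\langle v_{j'},v_j\rangle=-\epsilon_j$, not $1$. The other zigzag gives $\epsilon_j$, so with that reading neither composite is the identity.

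The normalization of $E$ dictated by $\gamma'=E\circ C$ is $E(v_{j'}\otimes v_j)=q^{\rho_j}\epsilon_j$; that is, the pairing $(v_i,v_j)$ in the printed formula must be read as $\delta_{i,j'}$. With this normalization both zigzag coefficients are exactly $1$, and $E(\alpha)=-\sum_k q^{-2\rho_k}=x$ for $r=-q^{2m+1}$, as it should be. Routing the argument through the functoriality of $F$, as you ultimately do, avoids this sign issue.
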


The following topological categorification of the quantized Schur--Weyl duality of type $C$
was essentially obtained in \cite[Theorem 4.8]{XYZ}.

\begin{theorem}\label{fullness of RT functor}
The RT-functor $F: \mathbb{D}(\mathfrak{sp}_{2m})\rightarrow
\mathcal{T}(V)$ is full, i.e. $F$ is surjective on $\Hom$ spaces.
Furthermore, the map $F^s_t: \mathcal{D}(s,t)\rightarrow H(s,t)$ is injective
if $s+t\leq 2m$
\end{theorem}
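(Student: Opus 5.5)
We reduce the statement to the existing quantized Schur--Weyl duality, Theorem~\ref{xx1.3}, via the Reshetikhin--Turaev functor and Lemma~\ref{tech lemma}. By Theorem~\ref{iso between tangle and BMW}, $\mathcal{D}(n,n)\cong\mathfrak{B}_n(r,q)$, and under this isomorphism $F^n_n$ is the action map $\varphi$. So fullness of $F$ on $\mathcal{D}(n,n)$ is exactly surjectivity of $\varphi$.

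For general $s,t$, first note that $H(s,t)=0$ and $\mathcal{D}(s,t)=0$ when $s+t$ is odd. The central element $-1\in \mathrm{Sp}$ acts by $(-1)^s$, and tangles with an odd number of endpoints cannot exist. So assume $s+t=2k$. If $s\le t$, set $t=s+2j$ and apply the bending isomorphisms of Lemma~\ref{tech lemma}. The map $F\mathrm{A}$ identifies $H(s+j,s+j)$ with $H(s,s+2j)$, hence with $H(s,t)$. Its diagrammatic counterpart $\mathrm{A}$ sends $\mathcal{D}(s+j,s+j)$ to $\mathcal{D}(s,t)$, and the two are compatible through $F$ by the commutative squares in part (2) of that lemma. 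Since $F^{s+j}_{s+j}$ is surjective, $F\mathrm{A}$ is bijective and the square commutes, $F^s_t$ is surjective. The case $s>t$ is symmetric, using $\mathrm{U}$ and $F\mathrm{U}$; one can also apply the duality $^*$, since $F$ is pivotal. This proves fullness.

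For injectivity when $s+t\le 2m$, use the same bending. The map $\mathrm{A}$ is a linear isomorphism on the diagram side, with inverse $\mathrm{U}$, because zigzag straightening holds in the pivotal category $\mathbb{D}(r,q)$. So injectivity of $F^s_t$ is equivalent to injectivity of $F^{k}_{k}$ with $k=(s+t)/2\le m$. That map is $\varphi$ on $V^{\otimes k}$, which is injective by Theorem~\ref{xx1.3}(2), since $m\ge k$.

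The main obstacle is bookkeeping rather than new mathematics: we must check that the diagrammatic bending maps are genuinely mutually inverse in $\mathbb{D}(r,q)$ and that the specialization $r=-q^{2m+1}$ matches the BMW algebra acting in Theorem~\ref{xx1.3}. This requires $F$ to send the tangle generators to $\beta'_i,\gamma'_i$, which holds since $\gamma'=E\circ C$.
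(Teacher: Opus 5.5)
Your proposal is correct and follows the paper's own one-line argument. You use the bending isomorphisms of Lemma~\ref{tech lemma} to reduce $F^s_t$ to $F^k_k$ on $\mathcal{D}(k,k)\cong\mathfrak{B}_k(r,q)$ with $k=(s+t)/2$. You then invoke Theorem~\ref{xx1.3}(1) for fullness and Theorem~\ref{xx1.3}(2) with $k\le m$ for injectivity. You also make explicit two points the paper leaves implicit: the case of odd $s+t$, and the invertibility of the diagrammatic bending maps.

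One cosmetic fix: in the quantum setting there is no group element $-1\in\mathrm{Sp}$ to use. Instead, argue that $H(s,t)=0$ for odd $s+t$ by weights: every weight of $V^{\otimes s}$ has coordinate sum $\equiv s \pmod 2$, and $\mathbb U$-homomorphisms preserve weight spaces.
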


\begin{proof}
It follows from Lemma \ref{tech lemma} and Theorem \ref{xx1.3}.
\end{proof}

\section{Good Filtrations and Truncation Functors}\label{xxsec3}

In this section, we relate the BMW-ideal filtration of $V^{\otimes n}$ to highest weight theory
and truncation functors. From this section on, we fix the notation $\mathbb U:=U_q(\mathfrak{sp}_{2m})$.
Let $K$ be an infinite field.
For $0\le f\le \lfloor n/2\rfloor$, let $\mathfrak{B}^{(f)}_{n,K}$ be the two-sided ideal of
$\mathfrak{B}_{n,K}$ generated by $E_1E_3\cdots E_{2f-1}$.
We show that the layers coming from  the chain of ideals $\{\mathfrak{B}^{(f)}_{n,K}\}$ admit good filtrations and
identify $V^{\otimes n}\mathfrak{B}^{(f)}_{n,K}$ with the truncation submodule
$\mathcal{O}_{\pi_f}(V^{\otimes n})$, from which the basic properties of quantum partially harmonic tensors follow.

\subsection{Good filtrations for dual harmonic tensor}\label{subsec:good-filtration}

With the above notation, we establish good filtrations for the harmonic tensor modules.

Let $k\in\mathbb{N}$. Recall that a composition of $k$ is a sequence of nonnegative integers
$\lambda=(\lambda_1,\lambda_2,\ldots)$ such that $\sum_{i\geq 1}\lambda_i=k$. A composition
$\lambda=(\lambda_1,\lambda_2,\ldots)$ of $k$ is said to be a {\em partition} if
$\lambda_1\geq\lambda_2\geq\cdots$, and in this case, we write $\lambda\vdash k$ and denote by
$\ell(\lambda)$ the largest integer $j$ with $\lambda_j\neq 0$.
Let $E$ be an Euclidian space with standard basis $\varepsilon_1,\varepsilon_2,\ldots,\varepsilon_{2m}$.
Then $S:=\{\alpha_i=\varepsilon_i-\varepsilon_{i+1},\alpha_m=2\varepsilon_m\mid 1\leq i<m\}$ is a set
of simple roots in the root system of type $C_m$. We identify each weight
$\lambda=\lambda_1 \varepsilon_1+\cdots+\lambda_m\varepsilon_m$
with $\lambda=(\lambda_1,\ldots,\lambda_m)\in\mathbb{Z}^m$. Thus a weight $\sum_{i=1}^m \lambda_i \varepsilon_i$
is dominant if and only if $\lambda=(\lambda_1,\lambda_2,\ldots,\lambda_m)$ is a partition of some $k\in\mathbb{N}$.
For any two weights $\lambda,\mu$, we define $\lambda\geq \mu$ if and only if
$\lambda-\mu=\sum_{i=1}^m \mathbb{Z}_{\geq 0}\alpha_i$.

For each dominant weight
$\lambda$, we use $\Delta(\lambda),\nabla(\lambda),$ $L(\lambda)$ to denote the Weyl module,
dual Weyl module (also called Schur module) and irreducible module of $\mathbb{U}$ labelled by
$\lambda$ respectively \cite{APW}.
Let $f$ be an integer with $0\leq f\leq \left\lfloor \frac n2 \right\rfloor$. Set
$$
\pi_f:=\{\lambda\vdash n-2f-2r \mid  \ell(\lambda)\leq m, 0\leq r\leq \left\lfloor \frac n2-f \right\rfloor\}.
$$
It is well-known that $\pi_f$ is the set of all dominant weights of $V^{\otimes n-2f}$
as a $\mathbb{U}$-module (see \cite{HuXiao11} for example). Let $\mathcal{C}(\pi_f)$ denote the category of
finite dimensional $\mathbb{U}$-modules $M$ such that all composition factors
of $M$ have the form $L(\lambda)$ with $\lambda\in\pi_f$.  

%\subsection{Truncation functors​}
We define a functor
$\mathcal{O}_{\pi_f}$ from the category of finite dimensional
$\mathbb{U}$-modules to the category $\mathcal{C}(\pi_f)$ as follows:
$$
\mathcal{O}_{\pi_f}(M)=\sum_{M'\subseteq M, M'\in
	\mathcal{C}(\pi_f)}M'.
$$

As usual, we denote $\mathbb{U}$-mod to be the category of all finite dimensional left $\mathbb{U}$-modules of type 1.
We use $\mathcal{F}(\Delta)$ (resp. $\mathcal{F}(\nabla)$) to denote the full subcategory of $\mathbb{U}$-mod
given by all $\mathbb{U}$-modules having a filtration with all subquotients of this filtration being isomorphic
to some $\Delta(\lambda)$ (resp. $\nabla(\lambda)$) with dominant weight $\lambda$. 
If $M\in \mathcal{F}(\Delta)$, we use $[M:\Delta(\lambda)]$ to present the number of factors
in a filtration which are isomorphic to $\Delta(\lambda)$. Note that the multiplicities $[M:\Delta(\lambda)]$
is independent of the choice of filtration. Similarly, we can define $[M:\nabla(\lambda)]$.
Recall that by a tilting module we mean an object in $\mathcal{F}(\Delta)\cap \mathcal{F}(\nabla)$.

\begin{definition}\label{good filtration}
If $M\in \mathcal{F}(\nabla)$, we say that $M$ has a {\em good filtration}.
\end{definition}

The proof of the following Lemma \ref{weight-functor O} is similar to the analogue
in \cite[Part II, Chapter A]{Jan}

\begin{lemma}\label{weight-functor O}
Let $M\in \mathbb{U}$-{\rm mod}.
\begin{enumerate}
\item[(1)] $M$ belongs to $\mathcal{C}(\pi_f)$ if and only if each dominant weight of $M$
	belongs to $\pi_f$.

\item[(2)] If $M$ has a good filtration, then 
   $\mathcal{O}_{\pi_f}(M)$ has a good filtration with $$
	[\mathcal{O}_{\pi_f}(M):\nabla(\lambda)]=\begin{cases}
		[M:\nabla(\lambda)], &\text{if $\lambda\in\pi_{f}$;}\\
		0, &\text{otherwise.}
	\end{cases}
	$$

\item[(3)] $\mathcal{O}_{\pi_f}$ is a left exact functor.
\end{enumerate}
\end{lemma}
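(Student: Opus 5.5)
The plan is to mimic Jantzen's argument \cite[II, A.10--A.12]{Jan}, using that $\pi_f$ is \emph{saturated}: if $\lambda\in\pi_f$ and $\mu$ is dominant with $\mu\le\lambda$, then $\mu\in\pi_f$. This holds because $\pi_f$ is the set of dominant weights of $V^{\otimes n-2f}$, and the set of weights of a finite dimensional module is $W$-stable and saturated. Concretely, if $\mu\le\lambda$ then $|\mu|\equiv|\lambda|\pmod 2$ and $|\mu|\le|\lambda|$, since every simple root has coordinate sum $0$ or $2$. Moreover $\ell(\mu)\le m$ holds automatically.

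For (1): if $M\in\mathcal{C}(\pi_f)$, every dominant weight of $M$ is a weight of some composition factor $L(\lambda)$ with $\lambda\in\pi_f$. Such a weight is $\le\lambda$, so it lies in $\pi_f$ by saturation. Conversely, if every dominant weight of $M$ lies in $\pi_f$, then each composition factor $L(\lambda)$ of $M$ has dominant highest weight $\lambda$, which is a weight of $M$, hence $\lambda\in\pi_f$.

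For (3): $\mathcal{C}(\pi_f)$ is closed under submodules, quotients and finite sums. Hence $\mathcal{O}_{\pi_f}(M)$ is the largest submodule of $M$ lying in $\mathcal{C}(\pi_f)$. It follows that $\mathcal{O}_{\pi_f}(N)=N\cap\mathcal{O}_{\pi_f}(M)$ for $N\subseteq M$. For an exact sequence $0\to N\to M\to Q$, this gives exactness of $0\to\mathcal{O}_{\pi_f}(N)\to\mathcal{O}_{\pi_f}(M)\to\mathcal{O}_{\pi_f}(Q)$. The last map is well defined because the image of a module in $\mathcal{C}(\pi_f)$ stays in $\mathcal{C}(\pi_f)$.

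For (2), the key steps are as follows.

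First, $\mathcal{O}_{\pi_f}(\nabla(\lambda))$ equals $\nabla(\lambda)$ if $\lambda\in\pi_f$ and is $0$ otherwise. Indeed, if $\lambda\in\pi_f$, all dominant weights of $\nabla(\lambda)$ are $\le\lambda$, hence lie in $\pi_f$. If $\lambda\notin\pi_f$, any nonzero submodule contains the socle $L(\lambda)$, which is not in $\mathcal{C}(\pi_f)$.

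Second, we need the vanishing $\Ext^1_{\mathbb U}(L(\mu),\nabla(\lambda))=0$ unless $\mu>\lambda$, together with the standard quantum analogues from \cite{APW}.

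Third, we order a good filtration of $M$ so that factors $\nabla(\lambda)$ with $\lambda\in\pi_f$ occur at the bottom. This is possible since $\Ext^1(\nabla(\lambda),\nabla(\mu))\ne0$ forces $\lambda>\mu$, so by saturation a $\pi_f$-factor can never sit above a non-$\pi_f$-factor in an essential way. Concretely, we use \cite[II.4.16]{Jan}-style reordering: choose a filtration refining a total order compatible with $\le$, in which the $\pi_f$-weights, being an ideal for $\le$, come first.

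This yields a submodule $M_1\subseteq M$ filtered by the $\nabla(\lambda)$ with $\lambda\in\pi_f$, and a quotient $M/M_1$ filtered by the $\nabla(\mu)$ with $\mu\notin\pi_f$. Since $M_1\in\mathcal{C}(\pi_f)$, we have $M_1\subseteq\mathcal{O}_{\pi_f}(M)$. By left exactness and the first step applied inductively, $\mathcal{O}_{\pi_f}(M/M_1)=0$, because its socle would contain some $L(\mu)$ lying in the socle of a $\nabla(\mu)$ with $\mu\notin\pi_f$. Therefore $\mathcal{O}_{\pi_f}(M)=M_1$, which gives the multiplicity formula.

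The main obstacle is the reordering step. It relies on the quantum analogue of the vanishing $\Ext^1(\nabla(\lambda),\nabla(\mu))=0$ unless $\lambda>\mu$, for $\mathbb U$ over arbitrary $K$ and $q$. I would take this from Kempf vanishing for quantum groups \cite{APW}.
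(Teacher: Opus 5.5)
Your proposal is correct and is essentially the paper's route. The paper gives no details beyond deferring to Jantzen's truncation-functor arguments in \cite[Part II, Chapter A]{Jan}, and you have written that argument out: saturation of $\pi_f$, $\mathcal{O}_{\pi_f}(M)$ as the largest submodule in $\mathcal{C}(\pi_f)$, and moving the $\pi_f$-layers of a good filtration to the bottom using that $\Ext^1_{\mathbb U}(\nabla(\lambda),\nabla(\mu))\neq 0$ forces $\lambda>\mu$.

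One small correction: your aside that the weight set of an arbitrary finite-dimensional module is saturated is false in general (e.g.\ simple modules $L(\lambda)$ when $q$ is a root of unity), but your direct coordinate-sum argument proves saturation of $\pi_f$ on its own, so nothing is lost.
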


Recall that the notation $\alpha=\sum_{1\leq k\leq 2m}q^{-\rho_k}\epsilon_k v_k\otimes v_{k'}$ in
subsection \ref{xsec2.2}. Then $K\alpha$ is a one dimensional trivial $\mathbb{U}$-submodule of
$V^{\otimes 2}$ (see \cite[Lemma 3.1]{HuXiao11}). For each integer $0\leq f\leq\left\lfloor \frac n2 \right\rfloor$,
let $\mathfrak{B}^{(f)}_{n,K}$
be the two-sided ideal of the specialized BMW algebra $\mathfrak{B}_{n,K}$ generated by
$E_1E_3\cdots E_{2f-1}$. By convention, we write $\mathfrak{B}^{(0)}_{n,K}=\mathfrak{B}_{n,K}$
and $\mathfrak{B}^{(\left\lfloor \frac n2 \right\rfloor+1)}_{n,K}=0$. 

The following lemma shows that the submodule of partial harmonic tensors is preserved by the truncation functor.

\begin{lemma}\label{O preserve partial tensor}
With the notation as above, we have 
$\mathcal{O}_{\pi_f}\bigl(V^{\otimes n}\mathfrak{B}^{(f)}_{n,K}\bigr)=V^{\otimes n}\mathfrak{B}^{(f)}_{n,K}$.
\end{lemma}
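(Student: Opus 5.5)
By the definition of $\mathcal{O}_{\pi_f}$, it suffices to show that $M:=V^{\otimes n}\mathfrak{B}^{(f)}_{n,K}$ lies in $\mathcal{C}(\pi_f)$. Since $\mathcal{O}_{\pi_f}(M)$ is the sum of all submodules of $M$ in $\mathcal{C}(\pi_f)$, it then equals $M$. By Lemma \ref{weight-functor O}(1), this in turn reduces to showing that every dominant weight of $M$ belongs to $\pi_f$. We exploit two facts: $\pi_f$ is exactly the set of dominant weights of $V^{\otimes n-2f}$, and the set of weights of a module is inherited by its quotients.

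\textbf{Step 1: generate the ideal.} The ideal $\mathfrak{B}^{(f)}_{n,K}$ is spanned by elements $a\,E_1E_3\cdots E_{2f-1}\,b$ with $a,b\in\mathfrak{B}_{n,K}$. Hence
$$M=\sum_{b}V^{\otimes n}\mathfrak{B}_{n,K}E_1E_3\cdots E_{2f-1}b=\sum_b \bigl(V^{\otimes n}E_1E_3\cdots E_{2f-1}\bigr)b .$$
Each map $v\mapsto vb$ is a $\mathbb{U}$-homomorphism, because the actions commute. So $M$ is a quotient of a finite direct sum of copies of $N:=V^{\otimes n}E_1E_3\cdots E_{2f-1}$, and it suffices to show that all weights (in particular all dominant weights) of $N$ are weights of $V^{\otimes n-2f}$.

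\textbf{Step 2: identify $N$.} Since $\gamma'=E\circ C$, the operator $E_1E_3\cdots E_{2f-1}$ acts as
$$(E\otimes\cdots\otimes E\otimes\id_V^{\otimes n-2f})\ \text{followed by}\ (C\otimes\cdots\otimes C\otimes \id_V^{\otimes n-2f}).$$
Both maps are $\mathbb{U}$-homomorphisms, in the paper's left-to-right convention. Therefore
$$N\subseteq (K\alpha)^{\otimes f}\otimes V^{\otimes n-2f}\cong V^{\otimes n-2f},$$
using that $K\alpha$ is the trivial module. Conversely, $N$ equals this image because $E$ is surjective onto $K$, though containment is all we need. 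Thus $N$ is a submodule of a module isomorphic to $V^{\otimes n-2f}$.

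\textbf{Step 3: conclude.} The weights of $N$ are therefore weights of $V^{\otimes n-2f}$. The weights of the quotient $M$ of $N^{\oplus k}$ are then also among these. Consequently the dominant weights of $M$ lie in $\pi_f$, so $M\in\mathcal{C}(\pi_f)$ by Lemma \ref{weight-functor O}(1), and $\mathcal{O}_{\pi_f}(M)=M$.

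\textbf{Main obstacle.} The delicate point is Step 2: checking that the action of $E_1E_3\cdots E_{2f-1}$ really factors as $\mathbb{U}$-equivariant maps through $K^{\otimes f}\otimes V^{\otimes n-2f}$, given the right-action and composition-order conventions. I would handle it through the RT-functor of Theorem \ref{RT-functor}. Under the isomorphism of Theorem \ref{iso between tangle and BMW}, the element $E_1E_3\cdots E_{2f-1}$ corresponds to the tangle $(U\circ A)^{\otimes f}\otimes I_{n-2f}$. This tangle factors through the object $n-2f$ as $(U^{\otimes f}\otimes I_{n-2f})\circ(A^{\otimes f}\otimes I_{n-2f})$. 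Applying $F$ exhibits the action as a composite of morphisms in $\mathcal{T}(V)$ passing through $V^{\otimes n-2f}$, so $N$ is a homomorphic image of $V^{\otimes n-2f}$. This also suffices for Step 3, since quotients likewise inherit weights.
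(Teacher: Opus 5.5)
Your proof is correct and takes essentially the same route as the paper: the contraction $E_1E_3\cdots E_{2f-1}$ lands in $\alpha^{\otimes f}\otimes V^{\otimes n-2f}\cong V^{\otimes n-2f}$, so every dominant weight of $V^{\otimes n}\mathfrak{B}^{(f)}_{n,K}$ lies in $\pi_f$, and Lemma \ref{weight-functor O}(1) finishes. Your Step 1, which exhibits $V^{\otimes n}\mathfrak{B}^{(f)}_{n,K}$ as a quotient of $N^{\oplus k}$ (a sum of homomorphic images of $V^{\otimes n-2f}$), is the accurate form of the paper's claimed isomorphism $V^{\otimes n}\mathfrak{B}^{(f)}_{n,K}\cong V^{\otimes n-2f}$; that isomorphism fails in general (for $n=3$, $f=1$ the left side contains $\alpha\otimes V\oplus V\otimes\alpha$), but only the weight containment is needed.
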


\begin{proof}
		$V^{\otimes n}\mathfrak{B}_{n,K}^{(f)}$ is a $\mathbb{U}$-submodule because the actions commute.  The ideal $\mathfrak{B}_{n,K}^{(f)}$ is generated by $E_1E_3\cdots E_{2f-1}$, which corresponds to contracting $f$ tensor pairs to the one-dimensional trivial submodule $K\alpha \subset V^{\otimes 2}$ by \cite[Lemma 3.1]{HuXiao11}. 
	
	Thus, $V^{\otimes n}\mathfrak{B}_{n,K}^{(f)}$ is spanned by elements of the form
	\[
	(\alpha^{\otimes f} \otimes v) T_{\sigma}, \quad v \in V^{\otimes n-2f}, \ \sigma \in \mathfrak{S}_n.
	\]

	Since the right action of $T_{\sigma}$ is $\mathbb{U}$-equivariant, the $\mathbb{U}$-module $V^{\otimes n}\mathfrak{B}_{n,K}^{(f)}$ is isomorphic to
	\[
	V^{\otimes n}\mathfrak{B}_{n,K}^{(f)} \cong \alpha^{\otimes f} \otimes V^{\otimes n-2f} \cong (K\alpha)^{\otimes f} \otimes V^{\otimes n-2f}.
	\]
	As $(K\alpha)^{\otimes f}$ is a trivial $\mathbb{U}$-module, we have
	\[
	(K\alpha)^{\otimes f} \otimes V^{\otimes n-2f} \cong V^{\otimes n-2f}.
	\]
	By definition of $\pi_f$, the dominant weights of $V^{\otimes n-2f}$ lie in $\pi_f$. Therefore, all dominant weights of $V^{\otimes n}\mathfrak{B}_{n,K}^{(f)}$ belong to $\pi_f$. By Lemma \ref{weight-functor O}(1), we conclude
	\[
	V^{\otimes n}\mathfrak{B}_{n,K}^{(f)} \in \mathcal{C}(\pi_f) \quad \text{and} \quad \mathcal{O}_{\pi_f}\bigl(V^{\otimes n}\mathfrak{B}^{(f)}_{n,K}\bigr) = V^{\otimes n}\mathfrak{B}^{(f)}_{n,K}. \qedhere
	\]
\end{proof}

	We want to show that $\mathcal{O}_{\pi_f}\bigl(V^{\otimes
	n}\bigr)=V^{\otimes n}\mbb_n^{(f)}$. Before proving this equality, we
need some preparation. For simplicity, for any two finite
dimensional $\mathbb{U}$-modules $M, N$, we use $\sum_{\phi: M\rightarrow
	N}\Image\phi$ to denote the sum of all the image subspaces
$\Image\phi$, where $\phi$ runs over all the $\mathbb{U}$-module
homomorphisms from $M$ to $N$. It is clearly a $\mathbb{U}$-submodule of $N$.
Let $H(s,t):=\Hom_{\mathbb{U}}(V^{\otimes s},V^{\otimes t})$ for all $s,t\in\mathbb{N}$, and $\mathcal{D}(s,t):=\Hom_{\mathbb{D}(r,q)}(s,t)$.
\begin{lemma} \label{keylem1} 	
	The natural embedding $$ \iota_1: \Hom_{\mathbb{U}}\bigl(V^{\otimes n-2f},
	V^{\otimes n}\mathfrak{B}_{n,K}^{(f)}\bigr)\rightarrow\Hom_{\mathbb{U}}\bigl(V^{\otimes
		n-2f}, V^{\otimes n}\bigr)
	$$
	is actually an isomorphism, and
	$$ \sum_{\phi: V^{\otimes
			n-2f}\rightarrow V^{\otimes n}}\Image\phi=V^{\otimes
		n}\mathfrak{B}_{n,K}^{(f)}=\sum_{\phi: V^{\otimes n-2f}\rightarrow V^{\otimes
			n}\mathfrak{B}_{n,K}^{(f)}}\Image\phi.$$

\end{lemma}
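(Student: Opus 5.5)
We will prove the lemma by passing from $\mathbb{U}$-homomorphisms to tangles via the fullness of the RT-functor. The key point is that every $\phi\in H(n-2f,n)$ lands inside $V^{\otimes n}\mathfrak{B}^{(f)}_{n,K}$. Granting this, $\iota_1$ is surjective, and it is trivially injective, so it is an isomorphism. Moreover, the three sums of images then coincide once we show that $V^{\otimes n}\mathfrak{B}^{(f)}_{n,K}$ is itself a sum of images of maps $V^{\otimes n-2f}\to V^{\otimes n}$.

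\textbf{Step 1 (the inclusion $\supseteq$).} By the proof of Lemma~\ref{O preserve partial tensor}, $V^{\otimes n}\mathfrak{B}^{(f)}_{n,K}$ is spanned by the elements $(\alpha^{\otimes f}\otimes v)T_\sigma$ with $v\in V^{\otimes n-2f}$ and $\sigma\in\mathfrak{S}_n$. For fixed $\sigma$, consider the map
$$\phi_\sigma:\ v\mapsto(\alpha^{\otimes f}\otimes v)T_\sigma.$$
It is a $\mathbb{U}$-homomorphism, since $K\alpha$ is trivial and $T_\sigma$ acts equivariantly. Its image lies in $V^{\otimes n}\mathfrak{B}^{(f)}_{n,K}$. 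Hence $V^{\otimes n}\mathfrak{B}^{(f)}_{n,K}=\sum_\sigma \Image\phi_\sigma$, which is contained in both sums in the statement.

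\textbf{Step 2 (the main step: every $\phi\in H(n-2f,n)$ has image in $V^{\otimes n}\mathfrak{B}^{(f)}_{n,K}$).} By Theorem~\ref{fullness of RT functor}, $\phi=F(D)$ for some $D\in\mathcal{D}(n-2f,n)$, and we may take $D$ to be a single tangle. Since $D$ has $n-2f$ top vertices and $n$ bottom vertices, at least $f$ of its arcs join two bottom vertices. By regular isotopy together with relation Q1, we can rewrite $D$ as a linear combination of tangles of the form
$$(A^{\otimes f}\otimes D')\circ \tau,$$
where $D'\in\mathcal{D}(n-2f,n-2f)$ and $\tau$ is a braid word in $\mathcal{D}(n,n)$ that moves the $f$ caps to the first $2f$ positions. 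One way to see this is to slide each bottom–bottom arc upward: crossings can be passed through arcs by RII and RIII, and Q1 only introduces terms with more cups and caps, which are handled by induction on the number of crossings.

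Now apply $F$. By Theorem~\ref{RT-functor}, $F(A)=C$, so $v\mapsto \alpha^{\otimes f}\otimes F(D')(v)$, and the final $\tau$ acts as an element of $\mathfrak{B}_{n,K}$. The resulting image therefore lies in the span of the elements $(\alpha^{\otimes f}\otimes w)b$ with $b\in\mathfrak{B}_{n,K}$.

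It remains to check that $\alpha^{\otimes f}\otimes w$ lies in $V^{\otimes n}\mathfrak{B}^{(f)}_{n,K}$. Up to the invertible scalar $x$, this vector equals
$$(\alpha^{\otimes f}\otimes w)E_1E_3\cdots E_{2f-1},$$
since $\alpha\gamma'=x\alpha$. Here $x\neq0$ at $r=-q^{2m+1}$ in the nondegenerate case. If $x$ vanishes, we instead use $E_{1}E_{2}$-type relations: write $\alpha\otimes u$ as $(u'\otimes\alpha)$ acted on by $E_2E_1$ up to scalar. The right $\mathfrak{B}_{n,K}$-invariance of $V^{\otimes n}\mathfrak{B}^{(f)}_{n,K}$ then finishes the step.

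\textbf{Expected obstacle.} The main obstacle is making Step 2 rigorous, that is, producing the normal form for tangles in $\mathcal{D}(n-2f,n)$. An alternative is to use Lemma~\ref{tech lemma}: transport $D$ to $\mathcal{D}(n,n)$ via $\mathrm{A}^{n-2f}_{2f}$ or $\mathrm{U}$, and then use the standard fact that a BMW element factoring through $n-2f$ strands lies in $\mathfrak{B}^{(f)}_{n}$. Concretely, one takes $D\circ$ (cap $U$ applied to $2f$ new strands) and expresses $\phi(v)$ as $(\alpha^{\otimes f}\otimes v)\cdot b$ with $b$ built from $D$ and cups, where $b\in\mathfrak{B}^{(f)}$ because it has at least $f$ horizontal arcs. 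I would try this route first.
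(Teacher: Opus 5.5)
Your proof is correct in outline, but your main route to the key inclusion $\Image F(D)\subseteq V^{\otimes n}\mathfrak{B}^{(f)}_{n,K}$ differs from the paper's.

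\textbf{How the routes compare.} The paper pads the source. It forms $U^{\otimes f}\otimes D\in\mathcal{D}(n,n)$, uses $F(U^{\otimes f}\otimes D)=E^{\otimes f}\otimes F(D)$, and recovers $F(D)(x)=F(U^{\otimes f}\otimes D)(u_0\otimes x)$ for some $u_0$ with $E^{\otimes f}(u_0)=1$. The inclusion then rests on a fact the paper uses without comment: this $(n,n)$-tangle lies in the ideal generated by $E_1E_3\cdots E_{2f-1}$. The alternative in your last paragraph is essentially this argument. Your main route instead factors $D$ as caps followed by an $(n,n)$-tangle, so the image lands visibly in $V^{\otimes n}E_1E_3\cdots E_{2f-1}\mathfrak{B}_{n,K}$. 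This needs no information about which tangles lie in $\mathfrak{B}^{(f)}_{n,K}$, and it even supplies the paper's unstated step.

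\textbf{Step 2 is easier than you make it.} The difficulty comes from insisting that $\tau$ be a braid. Your sketch of that is also imprecise: RII and RIII alone cannot free a knotted or linked arc; what Q1 provides is crossing changes whose error terms have fewer crossings. Since $F$ maps $\mathcal{D}(n,n)$ onto the image of $\mathfrak{B}_{n,K}$, any $\tau\in\mathcal{D}(n,n)$ will do. The factorization $D=(A^{\otimes f}\otimes I_{n-2f})\circ\tau$ is then immediate from the zigzag identity $I_1=(A\otimes I_1)\circ(I_1\otimes U)$. For $D\in\mathcal{D}(s,t)$ with $s\geq 1$ this gives $D=(A\otimes I_s)\circ\bigl((I_1\otimes U\otimes I_{s-1})\circ D\bigr)$. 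Iterating $f$ times, and bending an output strand instead of an input when $s=0$, gives the factorization with no skein induction. It also yields $U^{\otimes f}\otimes D=(U^{\otimes f}\otimes I_{n-2f})\circ(A^{\otimes f}\otimes I_{n-2f})\circ\tau$, i.e.\ the tangle of $E_1E_3\cdots E_{2f-1}$ followed by $\tau$. That is exactly what the paper needs.

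\textbf{Drop the detour through $x$.} The lemma sits in Section~3, where $q$ may be a root of unity and $x$ can vanish. For $m=1$, $x=-(q^2+q^{-2})$, which is $0$ when $q^4=-1$. Your fallback then fails, because its first step gives $(u'\otimes\alpha)E_2=x\,(u'\otimes\alpha)=0$. Nothing of the kind is needed. $E_1$ acts on $V^{\otimes 2}$ by $u\mapsto E(u)\alpha$ with $E\neq 0$, so $\alpha^{\otimes f}\otimes w=(u_0\otimes w)E_1E_3\cdots E_{2f-1}$ whenever $E^{\otimes f}(u_0)=1$, for every $q$. This is what your Step~1 already uses. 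The paper's parenthetical choice $u_0=\alpha^{\otimes f}$ has the same slip: $E_1^2=xE_1$ forces $E(\alpha)=x$, so $E^{\otimes f}(\alpha^{\otimes f})=x^f$.
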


\begin{proof} 
	
	First observe that $V^{\otimes n}\mathfrak{B}_{n,K}^{(f)}$ is a $\mathbb{U}$-submodule of $V^{\otimes n}$ which decomposes as a sum of copies of $V^{\otimes n-2f}$. We hence have 
	$$ V^{\otimes n}\mathfrak{B}_{n,K}^{(f)} = \sum_{\phi: V^{\otimes n-2f}\rightarrow V^{\otimes n}\mathfrak{B}_{n,K}^{(f)}} \Image\phi \subseteq \sum_{\phi: V^{\otimes n-2f}\rightarrow V^{\otimes n}} \Image\phi \subseteq \sum_{F(D), D\in \mathcal{D}(n-2f,n) } \Image F(D). $$
	
	Using the Reshetikhin--Turaev functor $F$ and by Theorem \ref{RT-functor} and \ref{fullness of RT functor}, $H(s,t)$ is linearly spanned by $F(\mathcal{D}(s,t))$. Recall $U$ is an elementary tangle in Definition 2.1, and $F(U) = E$ where $E(v_i \otimes v_j) = q^{-\rho_i}\epsilon_j(v_i,v_j)$ by Theorem \ref{RT-functor}.
	
	Thus, we have the following commutative diagram
	\[
	\begin{tikzcd}
		\mathcal{D}(n-2f,n) \arrow[r, twoheadrightarrow, "F_n^{n-2f}"] \arrow[d, hookrightarrow,"i_1"'] & H(n-2f,n) \arrow[d, hookrightarrow,"i_2"] \\
		\mathcal{D}(n,n) \arrow[r, twoheadrightarrow,"F_n^n"] & H(n,n)
	\end{tikzcd}
	\]
	
	Here, $i_1(D) = U^{\otimes f}\otimes D$ and $i_2(\phi) = E^{\otimes f}\otimes \phi$ defined by
	\[
	i_2(\phi)(v_{i_1}\otimes\cdots \otimes v_{i_n})
	=
	E^{\otimes f}(v_{i_1}\otimes\cdots \otimes v_{i_{2f}})\cdot
	\phi(v_{i_{2f+1}}\otimes\cdots \otimes v_{i_n}).
	\]
	
	Thus, for any $D \in \mathcal{D}(n-2f,n)$, we have:
	\[
	\begin{aligned}
		i_2 F_n^{n-2f}(D) 
		&= i_2\big( F_n^{n-2f}(D) \big) \\
		&= E^{\otimes f}\otimes F_n^{n-2f}(D) \\
		&= F_0^{2}(U)^{\otimes f}\otimes F_n^{n-2f}(D) \\
		&= F_n^{n}\big( U^{\otimes f}\otimes D \big) \\
		&= F_n^n i_1 (D).
	\end{aligned}
	\]
	
	Thus $\Image\big(i_2F_n^{n-2f}(D)\big)=\Image F_n^n(i_1(D)) \subseteq V^{\otimes n}\mathfrak{B}_{n,K}^{(f)}$.
	Choose $u_0\in V^{\otimes 2f}$ such that $E^{\otimes f}(u_0)=1$ (for instance $u_0=\alpha^{\otimes f}$).
	Then for any $x\in V^{\otimes(n-2f)}$ we have
	$F_n^{n-2f}(D)(x)=\big(i_2F_n^{n-2f}(D)\big)(u_0\otimes x)\in V^{\otimes n}\mathfrak{B}_{n,K}^{(f)}$,
	hence $\Image F_n^{n-2f}(D)\subseteq V^{\otimes n}\mathfrak{B}_{n,K}^{(f)}$.
\end{proof}

For later use, we collect some well-known homological properties of $\mathbb{U}$-modules as follows (see \cite{AST,Donkin}).
\begin{lemma} \label{homological properties}
	Let $\lambda,\mu$ be two dominant weights, $i\in\mathbb{Z}_{\geq 0}$.
	\begin{enumerate}
		\item[(1)] $\Ext_{\mathbb{U}}^{i}\bigl(\Delta(\lam),\nabla(\mu)\bigr)=\delta_{i,0}\delta_{\lam,\mu}K$;
		
		\item[(2)] $\Ext_{\mathbb{U}}^{1}\bigl(\nabla(\lam),
		\nabla(\mu)\bigr)\neq 0$ implies that $\lam>\mu$. In particular,
		$$\Ext_{\mathbb{U}}^{1}\bigl(\nabla(\lam), \nabla(\lam)\bigr)=0;$$
		
		\item[(3)] $\Ext_{\mathbb{U}}^{1}\bigl(\Delta(\lam),
		\Delta(\mu)\bigr)\neq 0$ implies that $\lam<\mu$. In particular,
		$$\Ext_{\mathbb{U}}^{1}\bigl(\Delta(\lam), \Delta(\lam)\bigr)=0;$$
		
		\item[(4)] $\Hom_{\mathbb{U}}\bigl(\nabla(\lam),\nabla(\mu)\bigr)\neq 0$ unless $\mu\leq\lam$, and
		$\Hom_{\mathbb{U}}\bigl(\nabla(\lam),\nabla(\lam)\bigr)=K$.
	\end{enumerate}
\end{lemma}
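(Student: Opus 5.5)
The plan is not to reprove the general theory. The key input is that the category of finite-dimensional type $1$ $\mathbb{U}$-modules is a highest weight category in the sense of Cline--Parshall--Scott, with standard objects $\Delta(\lambda)$, costandard objects $\nabla(\lambda)$ and simples $L(\lambda)$. This rests on the quantum Kempf vanishing theorem of Andersen--Polo--Wen \cite{APW} (see also \cite{AST,Donkin}). From it I take two consequences. First, for every finite saturated set $\pi$ of dominant weights, the truncated category $\mathcal{C}(\pi)$ has enough injectives. Second, the $\Ext$-groups computed in $\mathcal{C}(\pi)$ agree with those computed in $\mathbb{U}$-mod. I prove (4) first, because (2) reduces to it.

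\textbf{Step 1: (4).} The module $\nabla(\mu)$ has simple socle $L(\mu)$, and every composition factor of $\nabla(\lambda)$ has the form $L(\nu)$ with $\nu\le\lambda$. Let $\phi:\nabla(\lambda)\to\nabla(\mu)$ be nonzero. Its image is a nonzero submodule of $\nabla(\mu)$, so it contains the socle $L(\mu)$. Hence $L(\mu)$ is a composition factor of $\nabla(\lambda)$, which forces $\mu\le\lambda$. This is the intended reading of (4): the Hom space vanishes unless $\mu\le\lambda$.

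For $\lambda=\mu$, any endomorphism $\phi$ of $\nabla(\lambda)$ preserves the one-dimensional $\lambda$-weight space. So $\phi-c\cdot\id$ kills that weight space for a suitable scalar $c$. If $\phi-c\cdot\id$ were nonzero, its image would contain the socle $L(\lambda)$, and then the image would contain a vector of weight $\lambda$; but that is impossible, since the image of the $\lambda$-weight space is zero and weight spaces map to weight spaces. Hence $\End_{\mathbb{U}}(\nabla(\lambda))=K$.

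\textbf{Step 2: (1).} By Frobenius reciprocity $\nabla(\mu)$ is an induced module, so $\Ext^i_{\mathbb{U}}(M,\nabla(\mu))$ is computed through the quantum Borel subalgebra. Using $\Delta(\lambda)\cong\nabla(-w_0\lambda)^*$ together with Kempf vanishing, one obtains the vanishing for $i>0$. For $i=0$, a homomorphism $\Delta(\lambda)\to\nabla(\mu)$ factors through the head $L(\lambda)$ of $\Delta(\lambda)$ and must land in the socle $L(\mu)$ of $\nabla(\mu)$. Therefore it is zero unless $\lambda=\mu$, and when $\lambda=\mu$ the Hom space is one-dimensional because the $\lambda$-weight spaces are one-dimensional. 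This is the standard Cline--Parshall--Scott/Donkin argument, and I will cite it rather than redo the cohomology.

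\textbf{Step 3: (2).} Fix a finite saturated set $\pi$ containing $\lambda$ and $\mu$, and let $I_\pi(\mu)$ be the injective hull of $L(\mu)$ in $\mathcal{C}(\pi)$. Highest weight theory gives a short exact sequence
$$0\to\nabla(\mu)\to I_\pi(\mu)\to Q\to 0,$$
where $Q$ has a $\nabla$-filtration whose factors are $\nabla(\nu)$ with $\nu>\mu$. Applying $\Hom(\nabla(\lambda),-)$ and using that $\Ext^1_{\mathcal{C}(\pi)}(\nabla(\lambda),I_\pi(\mu))=0$, we see that $\Ext^1(\nabla(\lambda),\nabla(\mu))$ is a quotient of $\Hom(\nabla(\lambda),Q)$. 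If this Ext group is nonzero, then by left exactness along the filtration of $Q$ some $\Hom(\nabla(\lambda),\nabla(\nu))$ with $\nu>\mu$ is nonzero. By Step 1 this gives $\nu\le\lambda$, and so $\lambda\ge\nu>\mu$. The special case $\lambda=\mu$ follows immediately, since $\lambda>\lambda$ is impossible.

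\textbf{Step 4: (3).} I deduce (3) from (2) by a contravariant duality: composing the linear dual with the Chevalley-type antiautomorphism of $\mathbb{U}$. This functor fixes each $L(\lambda)$, interchanges $\Delta(\lambda)$ and $\nabla(\lambda)$, and reverses the order of arguments in $\Ext$. It therefore turns $\Ext^1(\Delta(\lambda),\Delta(\mu))$ into $\Ext^1(\nabla(\mu),\nabla(\lambda))$, and (2) then yields $\mu>\lambda$.

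The main obstacle is not contained in this lemma. It is the foundational input behind Steps 2 and 3: quantum Kempf vanishing at an arbitrary $\zeta\in K^\times$, including roots of unity, and the agreement of $\Ext$ in $\mathcal{C}(\pi)$ with $\Ext$ in $\mathbb{U}$-mod. I would take both from \cite{APW,Donkin} rather than reprove them.
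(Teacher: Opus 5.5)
Your proposal is correct, and it does more than the paper does. The paper gives no argument for this lemma; it lists the four items as well known and cites \cite{AST,Donkin}.

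You also take the foundational input from the literature: quantum Kempf vanishing, hence (1), and the highest weight structure of the truncated categories $\mathcal{C}(\pi)$. Beyond that, you derive the remaining items from standard structural facts:
\begin{enumerate}
\item[(4)] from the simple socle $L(\mu)$ of $\nabla(\mu)$ and the weights of $\nabla(\lambda)$;
\item[(2)] from the costandard filtration $0\to\nabla(\mu)\to I_\pi(\mu)\to Q\to 0$ of the injective hull, combined with (4);
\item[(3)] from (2), via a contravariant duality that fixes simples.
\end{enumerate}
What this buys is a reduction of three of the four items to facts about simples, weights and injective hulls, leaving only (1) as a pure citation.

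You also correctly repair item (4). As printed, ``$\Hom_{\mathbb U}(\nabla(\lambda),\nabla(\mu))\neq 0$ unless $\mu\le\lambda$'' is false. For example, take $\lambda=0$ and $\mu\neq 0$; then $\Hom_{\mathbb U}(K,\nabla(\mu))=0$. The intended reading is your ``$=0$ unless $\mu\le\lambda$''.

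Three small remarks.
\begin{enumerate}
\item[(a)] In Step 3 you only use $\Ext^1$. The equality $\Ext^1_{\mathcal{C}(\pi)}=\Ext^1_{\mathbb U}$ is automatic, because $\mathcal{C}(\pi)$ is defined by composition factors and is therefore closed under extensions. So the comparison of higher $\Ext$-groups you list as a second consequence is never actually needed.
\item[(b)] The $i=0$ case of (1) is cleaner by weights than by ``factoring through the head''. A nonzero image has $L(\lambda)$ and $L(\mu)$ as composition factors, and its weights are bounded by both $\lambda$ and $\mu$. This forces $\lambda=\mu$. The map is then determined by where the highest weight generator goes, which lies in a one-dimensional weight space.
\item[(c)] Step 3 can be shortened. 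Use directly that $\Ext^1_{\mathbb U}(L(\nu),\nabla(\mu))\neq 0$ implies $\nu>\mu$, and apply it to the composition factors $L(\nu)$, $\nu\le\lambda$, of $\nabla(\lambda)$. This rests on the same highest weight input as your injective-hull route, which is equally valid.
\end{enumerate}
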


\begin{lemma} \label{BP2} Let $M$ be a $\mathbb{U}$-module,  and $\lam$ be a dominant weight. Suppose $\rho: \nabla(\lam)\rightarrow
	M$ is a surjective $\mathbb{U}$-module homomorphism such that
	$\rho(u_{\lam})\neq 0$ for some highest weight vector
	$u_{\lam}\in\nabla(\lam)$ of weight $\lam$, then $\rho$ is an isomorphism. In
	particular, $M\cong\nabla(\lam)$.
\end{lemma}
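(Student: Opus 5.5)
The plan is the standard highest-weight argument. Since $\rho$ is surjective, $M$ is a quotient of $\nabla(\lam)$, so it suffices to show $\Ker\rho=0$. Set $N:=\Ker\rho\subseteq\nabla(\lam)$, a $\mathbb{U}$-submodule.

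The key input is that the socle of $\nabla(\lam)$ is simple and equal to $L(\lam)$. This holds in the quantum setting \cite{APW}, because $\nabla(\lam)$ is the induced module $H^0(\lam)$ and
\[
\Hom_{\mathbb{U}}\bigl(L(\mu),\nabla(\lam)\bigr)=\delta_{\lam\mu}K
\]
by Frobenius reciprocity. Moreover, the $\lam$-weight space of $\nabla(\lam)$ is one-dimensional, spanned by $u_\lam$, and $u_\lam$ lies in this socle $L(\lam)$, since $L(\lam)$ has $\lam$ as a weight. So every nonzero submodule of $\nabla(\lam)$ contains $\operatorname{soc}\nabla(\lam)=L(\lam)$, and therefore contains $u_\lam$.

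Now suppose $N\neq 0$. Then $u_\lam\in N=\Ker\rho$, so $\rho(u_\lam)=0$. The hypothesis only gives $\rho(u'_\lam)\neq0$ for \emph{some} highest weight vector $u'_\lam$ of weight $\lam$. But the $\lam$-weight space is one-dimensional, so $u'_\lam$ is a nonzero scalar multiple of $u_\lam$, and hence $\rho(u'_\lam)=0$. This is a contradiction, so $N=0$, $\rho$ is injective, and $\rho$ is an isomorphism. In particular $M\cong\nabla(\lam)$.

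The only step with real content is the socle statement $\operatorname{soc}\nabla(\lam)=L(\lam)$ with $u_\lam\in L(\lam)$. If one prefers to avoid citing it, it can be derived directly. Any simple submodule $L(\mu)\subseteq\nabla(\lam)$ gives a nonzero element of $\Hom_{\mathbb{U}}(L(\mu),\nabla(\lam))$. There is a nonzero map $\Delta(\mu)\to L(\mu)$, and composing gives a nonzero element of $\Hom_{\mathbb{U}}(\Delta(\mu),\nabla(\lam))$. By Lemma~\ref{homological properties}(1) this forces $\mu=\lam$, and the same statement shows the multiplicity is one. The image of $\Delta(\lam)$ in $L(\lam)$ contains its highest weight vector, which is of weight $\lam$ and so is proportional to $u_\lam$. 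This finishes the argument.
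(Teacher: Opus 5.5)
Your proposal is correct and follows the paper's route. The paper's proof is the one-line citation $\mathrm{soc}_{\mathbb{U}}\bigl(\nabla(\lambda)\bigr)=L(\lambda)$. You supply the remaining details: a nonzero kernel would contain this simple socle, hence the one-dimensional $\lambda$-weight space and $u_\lambda$. Your optional derivation of the socle statement from Lemma~\ref{homological properties}(1) is also sound.
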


\begin{proof} This follows from the fact that
	$\rm{soc}_{\mathbb{U}}\bigl(\nabla(\lam)\bigr)=L(\lam)$.
\end{proof}

\begin{lemma}\label{lem:weight-comparison} {\rm (\cite[Lemma 4.16]{HuXiao21})}
	Let $0 \leq a < b \leq \left\lfloor \frac n2 \right\rfloor$ and $\lambda \vdash n-2a$, $\mu \vdash n-2b$ with $\ell(\lambda), \ell(\mu) \leq m$. Then $\lambda \nleq \mu$.
\end{lemma}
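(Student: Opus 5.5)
The plan is to argue by contradiction, using a linear functional on the weight lattice that measures the size of a partition.

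Suppose $\lambda\leq\mu$. By the definition of the dominance order recalled above, this means
$$\mu-\lambda=\sum_{i=1}^{m}c_i\alpha_i,\qquad c_i\in\mathbb{Z}_{\geq 0}.$$
Consider the linear functional $s:\mathbb{Z}^m\to\mathbb{Z}$, $s(\nu)=\nu_1+\cdots+\nu_m$.

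First I evaluate $s$ on the simple roots. For $1\leq i<m$ we have $s(\alpha_i)=s(\varepsilon_i-\varepsilon_{i+1})=0$, while $s(\alpha_m)=s(2\varepsilon_m)=2$. Applying $s$ to the expression for $\mu-\lambda$ therefore gives
$$s(\mu)-s(\lambda)=2c_m\geq 0.$$

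Next I evaluate $s$ on $\lambda$ and $\mu$ themselves. Since $\ell(\lambda),\ell(\mu)\leq m$, both are identified with partitions in $\mathbb{Z}^m$ without losing any parts, so $s(\lambda)=n-2a$ and $s(\mu)=n-2b$. Hence
$$s(\mu)-s(\lambda)=2(a-b)<0,$$
because $a<b$. This contradicts $s(\mu)-s(\lambda)=2c_m\geq 0$, so $\lambda\nleq\mu$.

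The argument also covers the trivial case $\lambda=\mu$, which would force equal sizes, so no separate treatment is needed. There is no real obstacle here. The only point to check is that the length hypothesis lets the sizes of the partitions be read off as the coordinate sums in $\mathbb{Z}^m$, and this is exactly where $\ell(\lambda),\ell(\mu)\leq m$ is used.
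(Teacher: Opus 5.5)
Your proof is correct and complete. For $i<m$ the simple root $\alpha_i$ has coordinate sum $0$, and $\alpha_m$ has coordinate sum $2$. So $\lambda\le\mu$ forces $|\mu|-|\lambda|=2c_m\ge 0$, which contradicts $|\lambda|=n-2a>n-2b=|\mu|$; the hypothesis $\ell(\lambda),\ell(\mu)\le m$ is used exactly where you say.

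The paper gives no proof of its own and simply cites \cite[Lemma 4.16]{HuXiao21}. Your coordinate-sum argument is the standard way to verify that statement.
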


As a $\mathbb{U}$-module, $V^{\otimes n}$ is tilting. We construct a filtration respecting valence and dominance order:

\begin{equation}\label{eq:good-filtration}
	0 = M_0 \subset M_1 \subset \cdots \subset M_p = V^{\otimes n},
\end{equation}
such that:
\begin{enumerate}
	\item[(1)]  $M_i/M_{i-1} \cong \nabla(\lambda^{(i)})^{\oplus n_i}$ where $\lambda^{(i)} \vdash n-2g_i$, $0 \leq g_i \leq \left\lfloor \frac n2 \right\rfloor$, $\ell(\lambda^{(i)}) \leq m;$
	\item[(2)]  $\lambda^{(i)} \neq \lambda^{(j)}$ for $i \neq j$, and $\lambda^{(i)} < \lambda^{(j)} \implies i < j;$ and
	\item[(3)]  for some $1 \leq t \leq p$, $\lambda^{(i)} \in \pi_f \iff 1 \leq i \leq t$ where $\pi_f = \{\lambda \vdash n-2f-2r \mid \ell(\lambda) \leq m, 0 \leq r \leq\left\lfloor \frac n2-f \right\rfloor\}.$
\end{enumerate}

Now we assume that $\lambda$ is a partition of $n-2f$ satisfying
$\ell(\lam)\leq m$. We define maximal vectors \cite[Definition 3.1]{HuXiao11},\label{def:special-vectors} $$ z_{f,\lambda}:=\alpha^{\otimes
	f}\otimes v_{\lambda}T_{w_{\lambda}}Y_{\lambda'},
$$
where $\lam'$ denotes the conjugate of $\lam$,  $Y_{\lam'}=\sum_{w\in\bBS_{\lam'}}(-q)^{-\ell(w)}{T}_w$ and
$$v_{\lambda}:=\underbrace{v_1\otimes\cdots\otimes v_1}_{\text{$\lam_1$ copies}}\otimes
\underbrace{v_2\otimes\cdots\otimes v_2}_{\text{$\lam_2$
		copies}}\otimes\cdots\otimes \underbrace{v_m\otimes\cdots\otimes
	v_m}_{\text{$\lam_m$ copies}}. $$

	\begin{lemma} {\rm (\cite[Lemmas 3.1, 3.2]{HuXiao11})} \label{lmhx} \emph{(1)} $\mmQ(q)\alpha$ is the one dimensional trivial
	$\mmU_{\mmQ(q)}(\mathfrak{sp}_{2m})$-submodule of
	$V_{\mmQ(q)}^{\otimes 2}$, i.e., $x\alpha=\varepsilon(x)\alpha$
	for any $x\in\mmU_{\mmQ(q)}(\mathfrak{sp}_{2m})$;
	
\emph{	(2) }$z_{f,\lam}$ is a non-zero maximal vector in $V_{\mmQ(q)}^{\otimes n}$ of
	weight $\lam$ with respect to the action of ${\mmU}_{\mmQ(q)}(\mathfrak{sp}_{2m})$.
\end{lemma}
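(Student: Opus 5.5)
The two parts are independent, and I would argue each one directly from the structures set up in Section~\ref{xxsec2}. The base ring plays no role: the argument is the same over $\mmQ(q)$ as over $K$.

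For part (1), I would avoid computing with generators and use the type $C$ RT-functor instead. By Theorem~\ref{RT-functor}, $C=F(A)\colon V^{\otimes 0}\to V^{\otimes 2}$ is a morphism in $\mathcal{T}(V)$, i.e.\ a $\mmU$-module homomorphism from the trivial module. Its image is $\mmQ(q)\alpha$, so $x\alpha=x\,C(1)=C(\varepsilon(x)1)=\varepsilon(x)\alpha$. As a sanity check, or if one prefers not to lean on Theorem~\ref{RT-functor}, one can verify directly from the coproduct that $E_i\alpha=F_i\alpha=0$ and $K_i\alpha=\alpha$. This works because the coefficients $q^{-\rho_k}\epsilon_k$ are designed so that the two contributions of $\Delta(E_i)$ on $v_k\otimes v_{k'}$ cancel in pairs.

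For part (2), I would first reduce to the factor $y:=v_\lambda T_{w_\lambda}Y_{\lambda'}\in V^{\otimes(n-2f)}$. Since $\Delta$ is an algebra map and $\alpha^{\otimes f}$ spans a trivial submodule, we get $x(\alpha^{\otimes f}\otimes y)=\alpha^{\otimes f}\otimes xy$ for all $x\in\mmU$. So $z_{f,\lambda}$ is a maximal vector of weight $\lambda$ if and only if $y$ is one, and $z_{f,\lambda}\neq 0$ if and only if $y\ne0$. The weight of $y$ is $\lambda$ because $v_\lambda$ has weight $\lambda$ and the Hecke operators $T_w$ commute with the $K_i$.

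Next I would observe that the subspace $V_+:=\mathrm{span}\{v_1,\dots,v_m\}$ is stable in the following sense. For $a,b\le m$ the terms $E_{i,j'}\otimes E_{i',j}$ of $\beta'$ vanish on $v_a\otimes v_b$, since they would require $a=b'$. Hence on $V_+^{\otimes(n-2f)}$ the operator $\beta'_i$ acts by the type $A$ Jimbo $R$-matrix. Consequently $y$ is, up to ordering conventions, the tensor product over the columns of $\lambda$ of the $q$-wedges $v_1\wedge\cdots\wedge v_{\lambda'_j}$, i.e.\ the familiar $q$-Schur highest weight vector for $U_q(\mathfrak{gl}_m)$.

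Then I would check annihilation by each $E_i$:
\begin{enumerate}
\item[(a)] For $i<m$, $E_i$ acts within $V_+$ as the $\mathfrak{gl}_m$ generator sending $v_{i+1}\mapsto v_i$. Each $q$-wedge on an initial segment $\{1,\dots,c\}$ is killed: if $c=i$, then $v_{i+1}$ does not occur; if $c\ge i+1$, the two resulting terms cancel by $q$-antisymmetry. The coproduct then shows that the tensor product of such wedges is killed.
\item[(b)] For $i=m$, the generator $E_m$ only moves $v_{m'}=v_{m+1}$ to $v_m$, so it kills $V_+$ entirely.
\end{enumerate}

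Nonvanishing of $y$ comes from looking at the coefficient of the basis tensor $v_\lambda T_{w_\lambda}$ itself. Expanding $Y_{\lambda'}=\sum_w(-q)^{-\ell(w)}T_w$, the term $w=1$ contributes that tensor with coefficient $1$. Every other $w\in\bBS_{\lambda'}$ permutes positions carrying distinct indices within a column, so by the triangularity of the $R$-matrix action its contribution involves only tensors different from $v_\lambda T_{w_\lambda}$.

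The main obstacle is this last triangularity claim together with the conventions. One must identify $v_\lambda T_{w_\lambda}$ precisely as the column-reading tensor with distinct indices in each column. One must also confirm that the right action $T_w$ of type $A$ on tensors with pairwise distinct entries in a block is a signed permutation plus strictly "lower" terms, rather than mixing back into the leading tensor. I would handle this by induction on $\ell(w)$, using the explicit formula $\beta'(v_a\otimes v_b)=v_b\otimes v_a+(q-q^{-1})\delta_{a<b}\,v_a\otimes v_b$ for $a\ne b\le m$. I would then order tensors lexicographically so that each step either permutes two factors or adds a term strictly smaller than the leading one.
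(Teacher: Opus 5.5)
The paper gives no proof of this lemma; it is quoted from \cite{HuXiao11}. Your overall plan is the standard one. For (1) you use that $C=F(A)$ is a $\mathbb U$-homomorphism. This is legitimate in the paper's order, but the construction of $F$ in \cite{XYZ} already presupposes it, so the direct coproduct check is the real content. For (2) you split off $\alpha^{\otimes f}$ and work in $V_+^{\otimes(n-2f)}$, where $E_m$ acts by $0$ and the $T_k$ act through the type $A$ Hecke algebra.

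\textbf{The gap.} It lies in your treatment of $y=v_\lambda T_{w_\lambda}Y_{\lambda'}$, and it comes from the very formula you quote. The formula $\beta'(v_a\otimes v_b)=v_b\otimes v_a+(q-q^{-1})\delta_{a<b}\,v_a\otimes v_b$ produces the extra term exactly on \emph{increasing} pairs. Increasing pairs occur both along a reduced word of $w_\lambda$ applied to $v_\lambda$ and inside each column block of the column-reading tensor. Two consequences follow.

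First, $v_\lambda T_{w_\lambda}$ is not a basis tensor. For $\lambda=(2,1)$ and $w_\lambda=s_2$ it equals $v_1\otimes v_2\otimes v_1+(q-q^{-1})\,v_1\otimes v_1\otimes v_2$. So the identification of $y$ with a product of $q$-wedges does not follow ``consequently''.

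Second, the triangularity claim in your non-vanishing step is false. For $\lambda=(1,1)$,
\[
y=(v_1\otimes v_2)(1-q^{-1}T_1)=q^{-2}\,v_1\otimes v_2-q^{-1}\,v_2\otimes v_1 ,
\]
so the term $w=s_1$ does feed back into the column-reading tensor, whose coefficient is $q^{-2}$, not $1$. Your lexicographic induction cannot rule this out. The column-reading tensor is lex-minimal among its block rearrangements, so it is always ``strictly smaller than the leading term''.

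\textbf{The repair.} On $V_+^{\otimes(n-2f)}$ we have $T_kY_{\lambda'}=-q^{-1}Y_{\lambda'}$ for $s_k\in\mathfrak S_{\lambda'}$. Hence interchanging two adjacent entries of a block multiplies $v_{\mathbf j}Y_{\lambda'}$ by $-q^{\pm1}$. If two equal entries of a block are adjacent, then $v_{\mathbf j}T_k=qv_{\mathbf j}$ and so $v_{\mathbf j}Y_{\lambda'}=0$. Therefore $v_{\mathbf j}Y_{\lambda'}=0$ whenever some block of $\mathbf j$ has a repeated index.

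Now take a word of content $\lambda$ with distinct entries in every block. There are $\lambda_1+\cdots+\lambda_i=\sum_j\min(i,\lambda'_j)$ entries $\le i$, and block $j$ holds at most $\min(i,\lambda'_j)$ of them. So the $j$-th block must be a rearrangement of $1,\dots,\lambda'_j$. Such a word has at least as many inversions as the column-reading word, namely $\ell(w_\lambda)$. The correction terms of $v_\lambda T_{w_\lambda}$ come from non-swapping steps and have fewer inversions, so $Y_{\lambda'}$ kills them.

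Thus $y=\bigotimes_j(v_1\otimes\cdots\otimes v_{\lambda'_j})Y_{\lambda'_j}$, and your steps (a) and (b) apply. Alternatively, the same count gives $E_iy=(E_iv_\lambda)T_{w_\lambda}Y_{\lambda'}=0$ for $i<m$ directly, since content $\lambda+\alpha_i$ forces a repeat in some block.

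For $y\neq 0$, look at the opposite end. For the column-reading word $\mathbf i$ one has $v_{\mathbf i}T_w\in v_{\mathbf iw}+\sum_{u<w}\mathbb Q(q)\,v_{\mathbf iu}$. So the block-reversed tensor arises only from $w=w_{0,\lambda'}$, and its coefficient in $y$ is $(-q)^{-\ell(w_{0,\lambda'})}\neq 0$.
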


	\begin{lemma}  {\rm (\cite[Lemma 4.6]{Hu})}\label{lm44} Let $K$ be an algebraically closed field. Suppose that $\lam$ is  a
	partition of $n$ with $\ell(\lam)\leq m$. Then
	\begin{enumerate}
		\item[(1)]  $z_{0,\lam}$ is a non-zero maximal vector of weight $\lam$ in $V^{\otimes
			n}$;
		\item[(2)]  $\mathbb{U}z_{0,\lam}\cong\Delta(\lam)$, and $V^{\otimes n}/\mathbb{U}z_{0,\lam}$ has a Weyl filtration;
		\item[(3)]  the dimension of $z_{0,\lam}\mathfrak{B}_{n,K}$ is independent of the
		field $K$.
	\end{enumerate}
\end{lemma}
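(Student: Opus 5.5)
The plan is to deduce the three assertions of Lemma~\ref{lm44} from the generic statement in Lemma~\ref{lmhx}(2) by integrality and specialization. I will also use the standard facts on Weyl modules for Lusztig's form $U_{\mathscr{A}}(\mathfrak{sp}_{2m})$, exactly as in \cite[Lemma 4.6]{Hu}, with the Brauer algebra replaced by $\mathfrak{B}_{n,K}$. The basic observation is that $z_{0,\lambda}=v_\lambda T_{w_\lambda}Y_{\lambda'}$ is defined over $\mathscr{A}$. Indeed, $v_\lambda$ lies in the $\mathscr{A}$-lattice $V_{\mathscr{A}}^{\otimes n}$, and the matrices $\beta'_i$ have entries in $\mathscr{A}$. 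Hence $z_{0,\lambda}\in V_{\mathscr{A}}^{\otimes n}$, and its image in $V^{\otimes n}=V_{\mathscr{A}}^{\otimes n}\otimes_{\mathscr{A}}K$ is the vector written $z_{0,\lambda}$ over $K$.

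For (1), I will show non-vanishing by computing one coefficient. Consider the coefficient of $z_{0,\lambda}$ on the basis tensor $v_\lambda T_{w_\lambda}$ itself, i.e. on $v_{\mathbf{i}}$ with $\mathbf{i}=\mathbf{i}_\lambda w_\lambda$ (the column reading of $\lambda$). Write $v_{\mathbf{i}}T_w$ for $w\in\mathfrak{S}_{\lambda'}$ in the type~$A$ form. Since $\mathbf{i}$ has distinct indices within each column block and no pair $j,j'$ occurs (all indices are $\le m$), the $E$-terms of $\beta'$ contribute nothing. Consequently $v_{\mathbf{i}}T_w$ for $w\ne 1$ has no component on $v_{\mathbf{i}}$, and the coefficient is $1$ (up to a power of $q$). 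This coefficient is a unit of $\mathscr{A}$, so the vector stays nonzero after any specialization. Maximality means that $E_i^{(k)}z_{0,\lambda}=0$ for all divided powers, and that $z_{0,\lambda}$ has weight $\lambda$. Both hold in $V_{\mathbb{Q}(q)}^{\otimes n}$ by Lemma~\ref{lmhx}(2). Since $E_i^{(k)}\in U_{\mathscr{A}}$ preserves the lattice, these identities hold over $\mathscr{A}$ and therefore after base change to $K$.

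For (2), a nonzero maximal vector of weight $\lambda$ gives a homomorphism $\Delta(\lambda)\to V^{\otimes n}$ sending the highest weight vector to $z_{0,\lambda}$, by the universal property of the Weyl module. To show this map is injective I will use a lattice argument. Take $M_{\mathscr{A}}:=U_{\mathscr{A}}z_{0,\lambda}$ and its saturation in $V_{\mathscr{A}}^{\otimes n}$. Generically $U_{\mathbb{Q}(q)}z_{0,\lambda}\cong\Delta_{\mathbb{Q}(q)}(\lambda)$ is simple of dimension given by the Weyl character formula. Since the coefficient found in (1) is a unit, the saturated lattice is cut out compatibly, and $U_{\mathscr{A}}z_{0,\lambda}$ is an $\mathscr{A}$-form of $\Delta(\lambda)$, namely the Weyl module $\Delta_{\mathscr{A}}(\lambda)$, which is $\mathscr{A}$-free. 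The alternative is to dualize the problem: it is enough to show that the dual map $V^{\otimes n}\cong(V^{\otimes n})^*\to\nabla(\lambda)$ is surjective, which follows by Lemma~\ref{BP2} once its composite with the highest weight vector is nonzero. I will try to reduce to exactly this situation, using the contravariant form on $V^{\otimes n}$ and the pairing of $z_{0,\lambda}$ against $v_\lambda T_{w_\lambda}$. The Weyl filtration on the quotient comes from a general fact. $V^{\otimes n}$ is tilting, and $\Delta(\lambda)\subseteq V^{\otimes n}$ with $\lambda$ a maximal-type weight of appropriate order. Using Lemma~\ref{homological properties}(1), a quotient $T/\Delta(\lambda)$ lies in $\mathcal{F}(\Delta)$ as soon as the inclusion is a $\Delta$-section, i.e. when $\Hom(-,\nabla(\mu))$ is exact. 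I will verify this by checking that $\dim\Hom(V^{\otimes n},\nabla(\mu))$ drops by exactly $\delta_{\lambda\mu}$ and is field-independent, using the field independence of $[V^{\otimes n}:\Delta(\mu)]$.

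For (3), $z_{0,\lambda}\mathfrak{B}_{n,K}$ is the specialization of $z_{0,\lambda}\mathfrak{B}_{n,\mathscr{A}}$. I plan to exhibit it as the image of $\mathfrak{B}_{n,K}$ under a map whose kernel is given by an $\mathscr{A}$-free module with free cokernel. The intended identification is $z_{0,\lambda}\mathfrak{B}_{n,K}\cong\Hom_{\mathbb{U}}(\Delta(\lambda),V^{\otimes n})$, through $b\mapsto(\text{h.w.}\mapsto z_{0,\lambda}b)$, which uses Theorem~\ref{xx1.3}(1) and (2). The right-hand side has dimension $[V^{\otimes n}:\nabla(\lambda)]$, which is independent of $K$. \textbf{The main obstacle} is the injectivity of this identification and the injectivity in (2): both require a careful argument that the lattice $U_{\mathscr{A}}z_{0,\lambda}$ is pure in $V^{\otimes n}_{\mathscr{A}}$.
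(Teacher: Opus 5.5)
The paper gives no argument of its own for this lemma. It imports it from \cite[Lemma 4.6]{Hu}, where it is stated for the Brauer algebra and $\mathrm{Sp}(V)$. Judged on its own terms, your proposal has a genuine gap in (2), and (3) inherits it.

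Part (1) is essentially adequate. Integrality of $z_{0,\lambda}$, Lemma~\ref{lmhx}(2), and stability of $V_{\mathscr{A}}^{\otimes n}$ under the divided powers give maximality over $K$, and a unit coordinate gives non-vanishing. Your stated reason is off, however: with the $\beta'$ used here, $(v_1\otimes v_2)T_1=v_2\otimes v_1+(q-q^{-1})v_1\otimes v_2$, so $v_{\mathbf i}T_w$ can have a $v_{\mathbf i}$-component for $w\neq 1$. The conclusion survives anyway, e.g.\ $z_{0,(1,1)}=q^{-2}v_1\otimes v_2-q^{-1}v_2\otimes v_1$.

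The genuine gap is (2). You flag it yourself but do not close it, and none of the mechanisms you sketch can close it.
\begin{itemize}
\item A unit coordinate only shows that the rank-one lattice $\mathscr{A}z_{0,\lambda}$ is pure. Let $v^+$ be the highest weight vector of $\Delta(\lambda)$. The kernel of the specialized map $\Delta(\lambda)\to V^{\otimes n}$, $v^+\mapsto z_{0,\lambda}$, misses the $\lambda$-weight space, so it lies in $\mathrm{rad}\,\Delta(\lambda)$ and is invisible to any such coefficient.
\item Lemma~\ref{BP2} concerns surjections \emph{out of} $\nabla(\lambda)$. A map $V^{\otimes n}\to\nabla(\lambda)$ whose image contains the highest weight vector is only known to have image containing $\mathrm{soc}\,\nabla(\lambda)=L(\lambda)$.
\item Pairing $z_{0,\lambda}$ with the non-maximal vector $v_\lambda T_{w_\lambda}$ is not induced by a $\mathbb{U}$-homomorphism, so it proves nothing.
\item $\lambda$ is in general not a maximal weight of $V^{\otimes n}$ (e.g.\ $(n)>\lambda$). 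So you cannot reorder a Weyl filtration to put $\Delta(\lambda)$ at the bottom, and for a non-maximal weight injectivity alone does not force a $\Delta$-filtered cokernel.
\end{itemize}

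What is actually needed is the following. Since $V^{\otimes n}\in\mathcal{F}(\Delta)$, a standard argument with $\Ext^{\bullet}_{\mathbb U}(-,\nabla(\mu))$ and Lemma~\ref{homological properties}(1) shows: $v^+\mapsto z_{0,\lambda}$ gives an embedding $\Delta(\lambda)\hookrightarrow V^{\otimes n}$ with $\Delta$-filtered cokernel if and only if $\psi(z_{0,\lambda})\neq0$ for some $\psi\in\Hom_{\mathbb U}(V^{\otimes n},\nabla(\lambda))$. Your ``dimension drops by $\delta_{\lambda\mu}$'' check is a restatement of exactly this, and field-independence of $[V^{\otimes n}:\Delta(\mu)]$ cannot decide it. Producing such a $\psi$ for every $K$ and every $q$ is the real content, and the proposal never supplies it.

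Naive choices can fail. If $\psi'$ is defined on $V^{\otimes n}Y_{\lambda'}$, then $\psi'(-\,Y_{\lambda'})$ takes the value $\bigl(\sum_{w\in\mathfrak{S}_{\lambda'}}q^{-2\ell(w)}\bigr)\psi'(z_{0,\lambda})$ at $z_{0,\lambda}$. This scalar can vanish in $K$, e.g.\ for $q^2=-1$ and $\lambda=(1,1)$.

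Part (3) inherits the gap. The map $y\mapsto(v^+\mapsto y)$ from $z_{0,\lambda}\mathfrak{B}_{n,K}$ to $\Hom_{\mathbb U}(\Delta(\lambda),V^{\otimes n})$ is well defined by (1) and trivially injective. So the injectivity you single out is not where the difficulty lies.

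The content is surjectivity. Each $\phi\colon\Delta(\lambda)\to V^{\otimes n}$ must extend along $\Delta(\lambda)\cong\mathbb{U}z_{0,\lambda}\subseteq V^{\otimes n}$ to an endomorphism of $V^{\otimes n}$, which then comes from $\mathfrak{B}_{n,K}$ by Theorem~\ref{xx1.3}(1). That extension requires $\Ext^1_{\mathbb U}(V^{\otimes n}/\mathbb{U}z_{0,\lambda},V^{\otimes n})=0$, i.e.\ both halves of (2) together with the good filtration of $V^{\otimes n}$. Theorem~\ref{xx1.3}(2) needs $m\ge n$ and plays no role here.

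Once (2) is available, this gives $\dim z_{0,\lambda}\mathfrak{B}_{n,K}=[V^{\otimes n}:\nabla(\lambda)]$, independent of $K$, as you intend. This is the same mechanism the paper uses in Lemma~\ref{Hu2lem}.
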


	\begin{corollary} {\rm (\cite[Corollary 4.7]{Hu})}\label{cor45} Let $f$ be an integer with $0\leq f\leq \left\lfloor \frac n2 \right\rfloor$ and $\lam$ a partition of $n-2f$ satisfying
	$\ell(\lam)\leq m$. Then there exists an embedding
	$\Delta(\lam)\hookrightarrow
	V^{\otimes n-2f}$ such that $V^{\otimes n-2f}/\Delta(\lam)$ has a Weyl filtration. In particular, $$
	\Ext_{\mathbb{U}}^{1}\bigl(V^{\otimes n-2f}/\Delta(\lam),V^{\otimes n}\bigr)=0.
	$$
\end{corollary}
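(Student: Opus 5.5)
We need to prove Corollary \ref{cor45}: there is an embedding $\Delta(\lam)\hookrightarrow V^{\otimes n-2f}$ whose cokernel has a Weyl filtration, and consequently $\Ext^1_{\mathbb U}\bigl(V^{\otimes n-2f}/\Delta(\lam),V^{\otimes n}\bigr)=0$.

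The plan is to apply Lemma \ref{lm44} with $n$ replaced by $n-2f$. Since $\lam\vdash n-2f$ and $\ell(\lam)\le m$, the vector $z_{0,\lam}\in V^{\otimes n-2f}$ is a nonzero maximal vector of weight $\lam$. By Lemma \ref{lm44}(2), the submodule $\mathbb U z_{0,\lam}\subseteq V^{\otimes n-2f}$ is isomorphic to $\Delta(\lam)$, and $V^{\otimes n-2f}/\mathbb U z_{0,\lam}$ has a Weyl filtration. This inclusion is the required embedding.

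Lemma \ref{lm44} is stated for algebraically closed $K$, while here $K$ is only infinite. I would descend along the extension $K\subset\overline K$.

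1. Both $V^{\otimes n-2f}$ and $\mathbb U z_{0,\lam}$ are defined over $K$. The vector $z_{0,\lam}$ has coefficients in the image of $\mathscr A$, and forming $\mathbb U$-submodules commutes with $-\otimes_K\overline K$.
2. Weyl modules are obtained by base change from their $\mathscr A$-forms, so $\Delta_K(\lam)\otimes_K\overline K\cong\Delta_{\overline K}(\lam)$.
3. The equality $\dim\mathbb Uz_{0,\lam}=\dim\Delta(\lam)$ and the Weyl-filtration multiplicities of the cokernel are governed by characters. Weyl modules are characterized by Ext-vanishing against $\nabla$, so Weyl filtrations can be detected after base change.

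If this descent is awkward, the cleaner route is the Ext criterion. A finite-dimensional $M$ lies in $\mathcal F(\Delta)$ if and only if $\Ext^1_{\mathbb U}(M,\nabla(\mu))=0$ for all dominant $\mu$. This condition is compatible with field extension by flatness.

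For the Ext statement, recall that $V^{\otimes n}$ is tilting, so it lies in $\mathcal F(\nabla)$. Let $N:=V^{\otimes n-2f}/\Delta(\lam)$, which lies in $\mathcal F(\Delta)$. I would induct on the length of the Weyl filtration of $N$ and the length of the good filtration of $V^{\otimes n}$, using the long exact sequences in both arguments. This reduces the claim to $\Ext^1_{\mathbb U}\bigl(\Delta(\mu),\nabla(\nu)\bigr)=0$, which holds by Lemma \ref{homological properties}(1).

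The main obstacle is the passage from algebraically closed to arbitrary infinite $K$, that is, checking that the construction and the Weyl filtration are defined over $K$. The homological consequence is routine.
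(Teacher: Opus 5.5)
Your proposal is correct and follows the intended argument: the paper gives no proof and cites \cite[Corollary 4.7]{Hu}, which is obtained exactly this way. You apply Lemma~\ref{lm44} to $V^{\otimes n-2f}$ to get $\mathbb{U}z_{0,\lambda}\cong\Delta(\lambda)$ with Weyl-filtered cokernel, then deduce $\Ext^1_{\mathbb U}\bigl(V^{\otimes n-2f}/\Delta(\lambda),V^{\otimes n}\bigr)=0$ by d\'evissage from Lemma~\ref{homological properties}(1) and the good filtration of the tilting module $V^{\otimes n}$. Your descent from $\overline{K}$ to $K$ handles a point the paper leaves implicit (it silently inherits the algebraically closed hypothesis of Lemma~\ref{lm44}), and your Ext-criterion sketch for it is adequate.
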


%Recall that $U$ is the unipotent radical of the positive Borel subgroup $B$. 
For each $\mathbb{U}$-module $M$, we use $M^{U}_{\lam}$ to denote the
subspace of maximal vectors in $M$ of weight $\lam$.

\begin{lemma}\label{Hu2lem}
	Let $f$ be an integer with $0\le f\le \left\lfloor \frac n2\right\rfloor$ and let $\lambda$ be a
	partition of $n-2f$ with $\ell(\lambda)\le m$. Then
	\[
	\bigl(V^{\otimes n}\bigr)^{U}_{\lambda}=z_{f,\lambda}\mathfrak{B}_{n,K}.
	\]
	In particular, $z_{f,\lambda}$ is a non-zero maximal vector of weight $\lambda$ in $V^{\otimes n}$.
	Moreover, the dimensions of $\bigl(V^{\otimes n}\bigr)^{U}_{\lambda}$ and
	$z_{f,\lambda}\mathfrak{B}_{n,K}$ are both independent of $K$.
\end{lemma}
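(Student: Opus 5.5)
We will reduce the statement to the case $f=0$ (Lemma \ref{lm44}) together with the fullness of the RT-functor. There are three things to prove: containment, the reverse containment, and independence of $K$.

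\emph{Step 1 (containment).} The vector $z_{f,\lambda}=\alpha^{\otimes f}\otimes z_{0,\lambda}$ lies in $V^{\otimes n}$. Here $z_{0,\lambda}=v_\lambda T_{w_\lambda}Y_{\lambda'}\in V^{\otimes n-2f}$ is a maximal vector of weight $\lambda$ by Lemma \ref{lm44}(1). Since $K\alpha$ is trivial, $z_{f,\lambda}$ is also maximal of weight $\lambda$, and it is nonzero. Because the right action of $\mathfrak{B}_{n,K}$ commutes with $\mathbb{U}$ and preserves weight spaces, we get $z_{f,\lambda}\mathfrak{B}_{n,K}\subseteq (V^{\otimes n})^U_\lambda$.

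\emph{Step 2 (reverse containment).} We identify $(V^{\otimes n})^U_\lambda$ with $\Hom_{\mathbb{U}}(\Delta(\lambda),V^{\otimes n})$, using that $\Delta(\lambda)$ is generated by a highest weight vector with universal property. By Lemma \ref{lm44}(2) and Corollary \ref{cor45}, $\Delta(\lambda)\cong \mathbb{U}z_{0,\lambda}\hookrightarrow V^{\otimes n-2f}$ with Weyl-filtered cokernel. Since $V^{\otimes n}$ is tilting, $\Ext^1(V^{\otimes n-2f}/\Delta(\lambda),V^{\otimes n})=0$, so every $\phi:\Delta(\lambda)\to V^{\otimes n}$ extends to some $\tilde\phi\in H(n-2f,n)$. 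Hence every maximal vector of weight $\lambda$ has the form $\tilde\phi(z_{0,\lambda})$. By Theorem \ref{fullness of RT functor}, $\tilde\phi=F(D)$ for some $D\in\mathcal{D}(n-2f,n)$. We now compose with the cap-insertion from the proof of Lemma \ref{keylem1}: writing $D'=U^{\otimes f}\otimes D\in\mathcal{D}(n,n)$ and normalizing by the scalar $E^{\otimes f}(\alpha^{\otimes f})$, which is a nonzero power of $x$ or of the constant $E(\alpha)$, we get $F(D)(z_{0,\lambda})=c\,F(D')(\alpha^{\otimes f}\otimes z_{0,\lambda})$. By Theorem \ref{iso between tangle and BMW}, $F(D')$ is the right action of an element $b\in\mathfrak{B}_{n,K}$, so this vector equals $c\, z_{f,\lambda}b$. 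Some care is needed with the left/right conventions, since $F$ composes top-to-bottom while the BMW action is on the right. We must also check that $E(\alpha)\neq0$ in $K$: we have $E(\alpha)=\sum_k q^{-2\rho_k}\epsilon_k\epsilon_{k'}\cdot(\pm1)$, a Laurent expression that is nonzero. If it happened to vanish, we would instead choose $u_0$ as in Lemma \ref{keylem1} with $E^{\otimes f}(u_0)=1$, and then express $u_0\otimes z_{0,\lambda}$ in terms of $z_{f,\lambda}\mathfrak{B}_{n,K}$ using the diagram that caps and re-cups.

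\emph{Step 3 (independence of $K$).} We have $\dim(V^{\otimes n})^U_\lambda=\dim\Hom(\Delta(\lambda),V^{\otimes n})=[V^{\otimes n}:\nabla(\lambda)]$ by Lemma \ref{homological properties}(1), because $V^{\otimes n}$ has a good filtration. This multiplicity is given by the character of $V^{\otimes n}$ and Weyl's character formula, so it is independent of $K$. By Step 2 the same holds for $z_{f,\lambda}\mathfrak{B}_{n,K}$.

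\textbf{Main obstacle.} The delicate point is Step 2. The identification of maximal vectors with $\Hom(\Delta(\lambda),-)$ holds for modules of type $1$ over Lusztig's form at arbitrary $q\in K^\times$. We must also verify the transfer from $V^{\otimes n-2f}$ to $V^{\otimes n}$ via $\alpha^{\otimes f}$ with correct scalars. I would first try to realize this transfer purely diagrammatically as $D\mapsto (A^{\otimes f}\text{-capped})\,U^{\otimes f}\otimes D$, checking in the tangle category that the result acts on $z_{f,\lambda}$ as $D$ acts on $z_{0,\lambda}$, with the factor $x^f$ (from Q4) or the constant $E(\alpha)^f$ absorbed into $c$.
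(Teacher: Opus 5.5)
\textbf{The gap: the contraction scalar in Step 2.} Your argument follows the paper's proof step for step. You identify $(V^{\otimes n})^U_\lambda$ with $\Hom_{\mathbb U}(\Delta(\lambda),V^{\otimes n})$, extend along $\Delta(\lambda)\hookrightarrow V^{\otimes n-2f}$ via Corollary~\ref{cor45}, write the extension as $F(D)$ by Theorem~\ref{fullness of RT functor}, and move $F(D)(z_{0,\lambda})$ into $z_{f,\lambda}\mathfrak{B}_{n,K}$ by passing to $U^{\otimes f}\otimes D$. Steps 1 and 3 are fine.

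The problem is the scalar in Step 2. $F$ sends the closed loop $A\circ U$ to $1\mapsto E(\alpha)$, and this loop equals $x\,\id_0$ by Q4. Hence $E^{\otimes f}(\alpha^{\otimes f})=x^f$ with $x=1-[2m+1]_q$. That $x$ is a nonzero element of $\mathscr A$ does not make its image under $q\mapsto\zeta$ nonzero, and the lemma is stated for arbitrary $\zeta\in K^\times$. For instance, $x=0$ for $m=2$, $\zeta=\sqrt{-1}\in\mathbb C$, or for $\zeta=1$ with $\operatorname{char}K$ dividing $2m$. In those cases $F(U^{\otimes f}\otimes D)(z_{f,\lambda})=0$, and Step 2 proves nothing.

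Your fallback does not rescue this. The vector $u_0\otimes z_{0,\lambda}$ is in general not a maximal vector, so it cannot lie in $z_{f,\lambda}\mathfrak{B}_{n,K}$. Choosing $u_0$ to be $\mathbb U$-invariant does not help either. By fullness $u_0=F(D_0)(1)$ for some $D_0\in\mathcal D(0,2f)$, so $E^{\otimes f}(u_0)$ is the value of a nonempty closed diagram, and skein reduction shows that is a multiple of $x$.

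The paper's written proof has the same weak point: it asserts $E^{\otimes f}(\alpha^{\otimes f})=1$. The contraction argument is valid exactly when $x\neq 0$ in $K$. This does hold when $q$ is not a root of unity, since $x=0$ forces $q^2=1$ or $(q^{2m+2}+1)(q^{2m}-1)=0$.

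\textbf{The fix.} Factor $D$ through $\tau_f=F(A^{\otimes f}\otimes I_{n-2f})$ instead of contracting with $U^{\otimes f}$.
\begin{enumerate}
\item A tangle $D\in T(n-2f,n)$ has at least $f$ arcs joining two bottom endpoints.
\item Pull a finger of each of $f$ such arcs up to $2f$ new top endpoints on the left, passing over all other strands. This produces $D'\in T(n,n)$ with $D=(A^{\otimes f}\otimes I_{n-2f})\circ D'$ up to regular isotopy. Only RII and RIII moves are involved, so no framing factor appears.
\item With the paper's left-to-right convention this reads $F(D)(v)=F(D')(\alpha^{\otimes f}\otimes v)=(\alpha^{\otimes f}\otimes v)\,b$, where $b\in\mathfrak{B}_{n,K}$ corresponds to $D'$ under Theorem~\ref{iso between tangle and BMW}.
\item Taking $v=z_{0,\lambda}$ gives $F(D)(z_{0,\lambda})=z_{f,\lambda}b$ with no scalar, for every specialization.
\end{enumerate}
With this replacement in Step 2, the rest of your proof goes through.
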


\begin{proof}
	By Corollary \ref{cor45}, we have an embedding $\Delta(\lambda)\hookrightarrow V^{\otimes n-2f}$ such that
	$V^{\otimes n-2f}/\Delta(\lambda)$ has a Weyl filtration. Consider the commutative diagram
	\[
	\xymatrix{
		0 \ar[r] &
		\Hom_{\mathbb{U}}\bigl(V^{\otimes n-2f}, V^{\otimes n}\mathfrak{B}_{n,K}^{(f)}\bigr)
		\ar[r]^{\sim} \ar[d]_{\beta} &
		\Hom_{\mathbb{U}}\bigl(V^{\otimes n-2f}, V^{\otimes n}\bigr) \ar@{->>}[d] \\
		0 \ar[r] &
		\Hom_{\mathbb{U}}\bigl(\Delta(\lambda), V^{\otimes n}\mathfrak{B}_{n,K}^{(f)}\bigr)
		\ar@{^{(}->}[r] &
		\Hom_{\mathbb{U}}\bigl(\Delta(\lambda), V^{\otimes n}\bigr).
	}
	\]
	The top horizontal isomorphism follows from Lemma \ref{keylem1}. The Ext vanishing
	$\Ext_{\mathbb{U}}^{1}\bigl(V^{\otimes n-2f}/\Delta(\lambda), V^{\otimes n}\bigr)=0$
	implies that the right vertical map is surjective, forcing $\beta$ to be surjective and the bottom map to be an isomorphism.
	
	Define the embedding $\tau_f: V^{\otimes n-2f} \hookrightarrow V^{\otimes n}$ by
	\[
	\tau_f(v_{i_1} \otimes \cdots \otimes v_{i_{n-2f}})
	= \alpha^{\otimes f} \otimes v_{i_1} \otimes \cdots \otimes v_{i_{n-2f}}.
	\]
	By Lemma \ref{keylem1},
	$\Hom_{\mathbb{U}}\bigl(\Delta(\lambda), V^{\otimes n}\mathfrak{B}_{n,K}^{(f)}\bigr)$
	is spanned by $F(D)\circ\tau_f$ for $D\in \mathcal{D}(n-2f,n)$.
	
	Now consider the diagrammatic lifting
	\[
	\iota:\mathcal{D}(n-2f,n)\to \mathcal{D}(n,n),\qquad D\longmapsto U^{\otimes f}\otimes D,
	\]
	where $U$ is the evaluation tangle (satisfying $F(U)=E$). For any $D\in \mathcal{D}(n-2f,n)$, we have
	\[
	F(\iota(D))=F(U^{\otimes f}\otimes D)=E^{\otimes f}\otimes F(D).
	\]
	Acting on $z_{f,\lambda}=\alpha^{\otimes f}\otimes z_{0,\lambda}$, we obtain
	\[
	F(\iota(D))(z_{f,\lambda})
	=(E^{\otimes f}\otimes F(D))(\alpha^{\otimes f}\otimes z_{0,\lambda})
	=E^{\otimes f}(\alpha^{\otimes f})\cdot F(D)(z_{0,\lambda})
	=F(D)(z_{0,\lambda}),
	\]
	since $E^{\otimes f}(\alpha^{\otimes f})=1$. Thus
	\[
	\{F(D)(z_{0,\lambda}) \mid D \in \mathcal{D}(n-2f, n)\}
	=\{F(\iota(D))(z_{f,\lambda}) \mid D \in \mathcal{D}(n-2f, n)\}
	\subseteq z_{f,\lambda}\mathfrak{B}_{n,K},
	\]
	where the last inclusion holds because $\iota(D)\in\mathcal{D}(n,n)$ and $F$ maps $\mathcal{D}(n,n)$ onto $\mathfrak{B}_{n,K}$.
	Since $\beta$ is surjective, these elements span the maximal vectors of weight $\lambda$, proving that
	\[
	\bigl(V^{\otimes n}\bigr)^{U}_{\lambda}=z_{f,\lambda}\mathfrak{B}_{n,K}.
	\]
Since $V^{\otimes n}$ is a tilting $\mathbb U$-module, it has a good filtration. Hence, the dimension of
$\Hom_{\mathbb U}(\Delta(\lambda),V^{\otimes n})$ is independent of $K$. Therefore,
the dimensions of $\bigl(V^{\otimes n}\bigr)^{U}_{\lambda}$ and $z_{f,\lambda}\mathfrak{B}_{n,K}$ are  both  independent of $K$.
\end{proof}

We now establish a  structural result for the filtration layers.
\begin{proposition}\label{mainprop2}
	For each $i$ with $1\le i\le p$, the special maximal vector $z_{f_i,\lambda^{(i)}}$ lies in $M_i$, and there is an isomorphism of $\mathbb{U}$-$\mathfrak{B}_{n,K}$-bimodules
	\[
	M_i/M_{i-1} \cong \nabla(\lambda^{(i)}) \otimes z_{f_i,\lambda^{(i)}} \mathfrak{B}_{n,K},
	\]
	where $\nabla(\lambda^{(i)})$ is a left $\mathbb{U}$-module and $z_{f_i,\lambda^{(i)}} \mathfrak{B}_{n,K}$ is a right $\mathfrak{B}_{n,K}$-module.
\end{proposition}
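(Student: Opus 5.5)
The plan is to identify each layer $M_i/M_{i-1}$ through its space of maximal vectors of weight $\lambda^{(i)}$, and then to invoke Lemma \ref{Hu2lem}. Throughout, write $\lambda=\lambda^{(i)}$, $f_i=g_i$ (so $\lambda\vdash n-2f_i$), and $Q:=M_i/M_{i-1}\cong\nabla(\lambda)^{\oplus n_i}$.

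\emph{Step 1: each $M_i$ is a truncation and is $\mathfrak{B}_{n,K}$-stable.} Set $\Gamma_i:=\{\lambda^{(1)},\ldots,\lambda^{(i)}\}$.
\begin{enumerate}
\item[(a)] Since the weights of $\nabla(\mu)$ are $\le\mu$, conditions (1)--(2) of \eqref{eq:good-filtration} show that $\Gamma_i$ is saturated (downward closed) among the dominant weights of $V^{\otimes n}$. Indeed, if $\mu<\lambda^{(j)}$ with $\mu=\lambda^{(k)}$, then (2) forces $k<j$.
\item[(b)] I would then show $M_i=\mathcal{O}_{\Gamma_i}(V^{\otimes n})$, with $\mathcal{O}_{\Gamma_i}$ defined exactly like $\mathcal{O}_{\pi_f}$. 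The inclusion $\subseteq$ is clear, since every composition factor of $M_i$ is labelled by an element of $\Gamma_i$.
\item[(c)] For $\supseteq$, apply the analogue of Lemma \ref{weight-functor O}(2) for the saturated set $\Gamma_i$. Both modules have good filtrations with the same $\nabla$-multiplicities, namely $[V^{\otimes n}:\nabla(\mu)]$ for $\mu\in\Gamma_i$, so they have equal dimensions.
\item[(d)] Every $\mathbb{U}$-endomorphism of $V^{\otimes n}$ preserves $\mathcal{O}_{\Gamma_i}(V^{\otimes n})$, because the image of a submodule in $\mathcal{C}(\Gamma_i)$ again lies in $\mathcal{C}(\Gamma_i)$. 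Hence each $M_i$ is a $\mathbb{U}$-$\mathfrak{B}_{n,K}$-subbimodule, and $Q$ is a bimodule.
\end{enumerate}
I expect this step, pinning the abstractly chosen filtration down as a truncation, to be the main technical obstacle. If the filtration was only asserted to exist, I would instead construct it as $M_i:=\mathcal{O}_{\Gamma_i}(V^{\otimes n})$ for an ordering refining dominance and compatible with valence (using Lemma \ref{lem:weight-comparison} for condition (3)), and then verify (1) via Lemma \ref{weight-functor O}(2) and Lemma \ref{homological properties}(2).

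\emph{Step 2: $z_{f_i,\lambda}\in M_i$, and maximal vectors of weight $\lambda$ inject into $Q$.} By Lemma \ref{Hu2lem}, $z_{f_i,\lambda}$ is a nonzero maximal vector of weight $\lambda$. Let $v$ be any maximal vector of weight $\lambda$ in $V^{\otimes n}$.
\begin{itemize}
\item The module $\mathbb{U}v$ is a quotient of $\Delta(\lambda)$, so all its composition factors have highest weights $\le\lambda$. By the saturation of $\Gamma_i$ these lie in $\Gamma_i$, so $\mathbb{U}v\subseteq M_i$. Thus $(V^{\otimes n})^U_\lambda=(M_i)^U_\lambda$, and in particular $z_{f_i,\lambda}\in M_i$.
\item No $\lambda^{(j)}$ with $j<i$ satisfies $\lambda\le\lambda^{(j)}$, by (2). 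Hence $(M_{i-1})_\lambda=0$, and the projection gives an injective right $\mathfrak{B}_{n,K}$-map $(V^{\otimes n})^U_\lambda\hookrightarrow Q^U_\lambda$.
\item Both sides have dimension $n_i$. For the source, $\dim\Hom_{\mathbb{U}}(\Delta(\lambda),V^{\otimes n})=[V^{\otimes n}:\nabla(\lambda)]=n_i$ by Lemma \ref{homological properties}(1). For the target, $\nabla(\lambda)^U_\lambda$ is one-dimensional.
\end{itemize}
So this map is an isomorphism, and by Lemma \ref{Hu2lem} we obtain $z_{f_i,\lambda}\mathfrak{B}_{n,K}\cong Q^U_\lambda$ as right $\mathfrak{B}_{n,K}$-modules.

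\emph{Step 3: the bimodule isomorphism.} Fix a highest weight vector $u_\lambda\in\nabla(\lambda)$.
\begin{itemize}
\item Evaluation at $u_\lambda$ gives a map $\Hom_{\mathbb{U}}(\nabla(\lambda),Q)\to Q^U_\lambda$ of right $\mathfrak{B}_{n,K}$-modules. It is injective, since $\mathrm{soc}\,\nabla(\lambda)=L(\lambda)$ contains $u_\lambda$ (compare Lemma \ref{BP2}). Both sides have dimension $n_i$ by Lemma \ref{homological properties}(4), so it is bijective.
\item The evaluation map $\nabla(\lambda)\otimes_K\Hom_{\mathbb{U}}(\nabla(\lambda),Q)\to Q$, $x\otimes\phi\mapsto\phi(x)$, is a $\mathbb{U}$-$\mathfrak{B}_{n,K}$-bimodule map. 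It is an isomorphism because $Q\cong\nabla(\lambda)^{\oplus n_i}$ and $\End_{\mathbb{U}}(\nabla(\lambda))=K$.
\end{itemize}
Composing with Step 2 yields $M_i/M_{i-1}\cong\nabla(\lambda^{(i)})\otimes z_{f_i,\lambda^{(i)}}\mathfrak{B}_{n,K}$.
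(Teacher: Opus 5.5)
Your proof is correct, and its core is the same as the paper's. Both write the isotypic layer $Q=M_i/M_{i-1}\cong\nabla(\lambda^{(i)})^{\oplus n_i}$ as $\nabla(\lambda^{(i)})$ tensored with its space of highest weight vectors, and then replace that space by $z_{f_i,\lambda^{(i)}}\mathfrak{B}_{n,K}$ using Lemma~\ref{Hu2lem}.

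The paper works in coordinates. It chooses $Q=\bigoplus_s N_{i,s}$ and isomorphisms $\eta_s\colon\nabla(\lambda^{(i)})\to N_{i,s}$, sets $\widetilde\psi_i(v\otimes\sum_s a_su_s)=\sum_s a_s\eta_s(v)$, and checks equivariance by showing $\sum_t a_t\eta_t-\eta_s\circ b=0$. Your evaluation map $\nabla(\lambda)\otimes_K\Hom_{\mathbb U}(\nabla(\lambda),Q)\to Q$ is the basis-free form of $\widetilde\psi_i$, so equivariance comes for free.

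The real gain of your route is that it proves three things the paper only uses tacitly:
\begin{itemize}
\item[(a)] $M_i$ and $M_{i-1}$ are $\mathfrak{B}_{n,K}$-stable, which you get from $M_i=\mathcal{O}_{\Gamma_i}(V^{\otimes n})$. Without this, $Q$ carries no right action and the paper's expression $u_sb$ is undefined.
\item[(b)] $z_{f_i,\lambda^{(i)}}\in M_i$. This is asserted in the statement but never argued in the paper.
\item[(c)] Projection $(V^{\otimes n})^U_{\lambda^{(i)}}\to Q^U_{\lambda^{(i)}}$ is bijective, via $(M_{i-1})_{\lambda^{(i)}}=0$ and the dimension count. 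The paper simply writes elements of $(V^{\otimes n})^U_{\lambda^{(i)}}$ as $\sum_s a_su_s$ with $u_s\in Q$.
\end{itemize}
You also avoid the paper's incorrect claim that $\nabla(\lambda^{(i)})$ is generated by its highest weight vector; that holds for $\Delta$, not for $\nabla$. The correct reason the relevant map vanishes is $\End_{\mathbb U}(\nabla(\lambda))=K$, i.e.\ $\Hom_{\mathbb U}(\nabla(\lambda),Q)=\bigoplus_s K\eta_s$, which is what you use.

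Two minor points should be tightened.
\begin{itemize}
\item[(1)] In Step 3, the fact that $u_\lambda\in\mathrm{soc}\,\nabla(\lambda)$ does not by itself make evaluation injective. A map killing $u_\lambda$ merely has kernel containing the socle of the source. Argue instead from $\mathrm{soc}\,Q=L(\lambda)^{\oplus n_i}$: a nonzero image would then have a nonzero $\lambda$-weight space, which is the image of $Ku_\lambda$. Alternatively, use $\Hom_{\mathbb U}(\nabla(\lambda),Q)=\bigoplus_s K\eta_s$ directly.
\item[(2)] In Step 1(a) you verify downward closure only inside $\{\lambda^{(1)},\dots,\lambda^{(p)}\}$. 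Later you apply it to arbitrary composition factors $L(\mu)$ with $\mu\le\lambda^{(j)}$. Either observe that every element of $\pi_0$ occurs in $V^{\otimes n}$ with positive $\nabla$-multiplicity, or replace $\Gamma_i$ by its downward closure among dominant weights. By condition (2), the latter leaves the $\nabla$-multiplicities of $\mathcal{O}_{\Gamma_i}(V^{\otimes n})$ unchanged.
\end{itemize}
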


\begin{proof}
	Fix a decomposition $M_i/M_{i-1} = \bigoplus_{s=1}^{n_i} N_{i,s}$ where $N_{i,s} \cong \nabla(\lambda^{(i)})$. Let $u_s \in N_{i,s}$ be a maximal vector of weight $\lambda^{(i)}$, and $\eta_s: \nabla(\lambda^{(i)}) \to N_{i,s}$ the isomorphism sending $u_0$ to $u_s$. Define:
	\[
	\widetilde{\psi}_i: \nabla(\lambda^{(i)}) \otimes (V^{\otimes n})_{\lambda^{(i)}}^U \to M_i/M_{i-1}, \quad v \otimes \sum_{s=1}^{n_i} a_s u_s \mapsto \sum_{s=1}^{n_i} a_s \eta_s(v).
	\]
	This is a $\mathbb{U}$-module isomorphism. To show it is $\mathfrak{B}_{n,K}$-equivariant, fix $b \in \mathfrak{B}_{n,K}$ and write $u_s b = \sum_{t=1}^{n_i} a_t u_t$. The map
	\[
	\phi := \sum_{t=1}^{n_i} a_t \eta_t - \eta_s \circ b: \nabla(\lambda^{(i)}) \to M_i/M_{i-1}
	\]
	satisfies $\phi(u_0) = \sum_t a_t u_t - u_s b = 0$ (as $u_s b = \sum_t a_t u_t$). 
	Since $\nabla(\lambda^{(i)})$ is generated as a $\mathbb U$-module by its highest weight vector $u_0$,
	the equality $\phi(u_0)=0$ implies $\phi=0$.
	Thus
	\[
	\widetilde{\psi}_i(v \otimes (u_s b)) = \sum_t a_t \eta_t(v) = \eta_s(v) b = \widetilde{\psi}_i(v \otimes u_s) b.
	\]
	By Lemma \ref{Hu2lem}, $(V^{\otimes n})_{\lambda^{(i)}}^U = z_{f_i,\lambda^{(i)}} \mathfrak{B}_{n,K}$, and hence
	\[
	M_i/M_{i-1} \cong \nabla(\lambda^{(i)}) \otimes z_{f_i,\lambda^{(i)}} \mathfrak{B}_{n,K}. \qedhere
	\]
\end{proof}

The following theorem identifies the partial harmonic tensors with the truncation of the tensor space.

\begin{theorem} \label{keystep3} 
	We have $M_t = \sum_{\substack{\phi: V^{\otimes n-2f} \to V^{\otimes n}}} \Image\phi = V^{\otimes n}\mathfrak{B}^{(f)}_{n,K} = \mathcal{O}_{\pi_f}\bigl(V^{\otimes n}\bigr)$. 
	In particular, both $\dim V^{\otimes n}\mathfrak{B}^{(f)}_{n,K}$ and $\dim V^{\otimes n}/V^{\otimes n}\mathfrak{B}^{(f)}_{n,K}$ are independent of $K$.
\end{theorem}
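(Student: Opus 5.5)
The plan is to prove the chain of equalities by establishing two inclusions, $M_t\subseteq V^{\otimes n}\mathfrak{B}^{(f)}_{n,K}$ and $V^{\otimes n}\mathfrak{B}^{(f)}_{n,K}\subseteq \mathcal{O}_{\pi_f}(V^{\otimes n})$, and then closing the loop with $\mathcal{O}_{\pi_f}(V^{\otimes n})\subseteq M_t$. The middle equality $\sum_{\phi}\Image\phi=V^{\otimes n}\mathfrak{B}^{(f)}_{n,K}$ is already Lemma \ref{keylem1}, so only the outer two identifications need work.

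First, $V^{\otimes n}\mathfrak{B}^{(f)}_{n,K}\subseteq\mathcal{O}_{\pi_f}(V^{\otimes n})$ is immediate. By Lemma \ref{O preserve partial tensor}, $V^{\otimes n}\mathfrak{B}^{(f)}_{n,K}$ lies in $\mathcal{C}(\pi_f)$, and $\mathcal{O}_{\pi_f}(V^{\otimes n})$ is by definition the largest submodule of $V^{\otimes n}$ in $\mathcal{C}(\pi_f)$.

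Second, $\mathcal{O}_{\pi_f}(V^{\otimes n})\subseteq M_t$. Property (3) of the filtration \eqref{eq:good-filtration} gives that $M_t$ has all $\nabla$-factors in $\pi_f$. The quotient $V^{\otimes n}/M_t$ has a good filtration with factors $\nabla(\lambda^{(i)})$, $i>t$, and these $\lambda^{(i)}\notin\pi_f$. Write $N=\mathcal{O}_{\pi_f}(V^{\otimes n})$ and suppose the image of $N$ in $V^{\otimes n}/M_t$ were non-zero. That image lies in $\mathcal{C}(\pi_f)$, so it contains a simple submodule $L(\mu)$ with $\mu\in\pi_f$. But the socle of $V^{\otimes n}/M_t$ only involves $L(\lambda^{(i)})$ with $i>t$, because the socle of $\nabla(\lambda)$ is $L(\lambda)$ and socles of extensions embed into the socle pieces of the layers. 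This gives $\mu=\lambda^{(i)}\notin\pi_f$, a contradiction. Hence $N\subseteq M_t$. Alternatively, one can apply Lemma \ref{weight-functor O}(2): $\dim\mathcal{O}_{\pi_f}(V^{\otimes n})=\sum_{\lambda\in\pi_f}[V^{\otimes n}:\nabla(\lambda)]\dim\nabla(\lambda)=\dim M_t$, and since $M_t\in\mathcal{C}(\pi_f)$ forces $M_t\subseteq N$, equality follows.

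Third, and this is the main obstacle, $M_t\subseteq V^{\otimes n}\mathfrak{B}^{(f)}_{n,K}$. I would induct on $i\le t$, showing $M_i\subseteq V^{\otimes n}\mathfrak{B}^{(f)}_{n,K}$. By Proposition \ref{mainprop2}, $M_i/M_{i-1}$ is generated as a $\mathbb{U}$-module modulo $M_{i-1}$, up to the $\nabla$ socle issue, by the images of maximal vectors $z_{f_i,\lambda^{(i)}}b$ with $b\in\mathfrak{B}_{n,K}$. Here $\lambda^{(i)}\vdash n-2f_i$ with $f_i\ge f$ since $\lambda^{(i)}\in\pi_f$. Each such vector has the form $\alpha^{\otimes f_i}\otimes(\cdots)$ acted on by $\mathfrak{B}_{n,K}$, so it lies in $V^{\otimes n}\mathfrak{B}^{(f_i)}_{n,K}\subseteq V^{\otimes n}\mathfrak{B}^{(f)}_{n,K}$. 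The difficulty is that $\nabla(\lambda^{(i)})$ need not be generated by its highest weight vector. I would sidestep this with the dimension count. Comparing $\nabla$-multiplicities via Lemma \ref{weight-functor O}(2) together with the previous step gives $\dim V^{\otimes n}\mathfrak{B}^{(f)}_{n,K}\le\dim M_t$. For the reverse inequality I would exhibit $\Delta(\lambda)$-images: Lemma \ref{Hu2lem} and the surjectivity of $\beta$ in its proof show that every homomorphism $\Delta(\lambda)\to V^{\otimes n}$ with $\lambda\in\pi_f$ factors through $V^{\otimes n}\mathfrak{B}^{(f)}_{n,K}$. Hence $\dim\Hom(\Delta(\lambda),V^{\otimes n}\mathfrak{B}^{(f)}_{n,K})=[V^{\otimes n}:\nabla(\lambda)]$ for all $\lambda\in\pi_f$. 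Since $V^{\otimes n}\mathfrak{B}^{(f)}_{n,K}\subseteq M_t$ and $M_t$ has a good filtration with exactly these multiplicities, I would show the submodule also has a good filtration, via $\Ext^1$-vanishing from Lemma \ref{homological properties} and Lemma \ref{lem:weight-comparison}, which orders valences. It then follows that the submodule has the same character as $M_t$, and so equals it.

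Finally, $\dim M_t=\sum_{\lambda\in\pi_f}[V^{\otimes n}:\nabla(\lambda)]\dim\nabla(\lambda)$. Here $\dim\nabla(\lambda)$ is given by the Weyl character formula, and the multiplicities are independent of $K$ because $V^{\otimes n}$ is tilting with character $\mathrm{ch}(V)^n$. This yields the independence of $K$ for both $\dim V^{\otimes n}\mathfrak{B}^{(f)}_{n,K}$ and the quotient.
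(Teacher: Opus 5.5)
Your first, second and last steps are fine. Your socle argument for $\mathcal O_{\pi_f}(V^{\otimes n})=M_t$ is in fact cleaner than a pure dimension count. The third step, however, is the real content of the theorem, and it has a genuine gap.

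From $S:=V^{\otimes n}\mathfrak{B}^{(f)}_{n,K}\subseteq M_t$ and $\Hom_{\mathbb U}(\Delta(\lambda),S)=\Hom_{\mathbb U}(\Delta(\lambda),M_t)$ for $\lambda\in\pi_f$, you want to conclude that $S$ has a good filtration. You give no argument for this, and the claim is no weaker than $S=M_t$. Apply $\Hom_{\mathbb U}(\Delta(\lambda),-)$ to $0\to S\to M_t\to M_t/S\to 0$ and use $\Ext^1_{\mathbb U}(\Delta(\lambda),M_t)=0$. This gives $\Ext^1_{\mathbb U}(\Delta(\lambda),S)\cong\Hom_{\mathbb U}(\Delta(\lambda),M_t/S)$. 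Since $M_t/S\in\mathcal C(\pi_f)$, these groups vanish for all $\lambda\in\pi_f$ only if $M_t/S=0$.

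The data you collected cannot settle this, because maps out of Weyl modules only see socles. The pair $L(\lambda)\subsetneq\nabla(\lambda)$, for non-simple $\nabla(\lambda)$, satisfies both of your conditions. Lemma \ref{homological properties} presupposes a $\nabla$-filtration of $S$. Lemma \ref{lem:weight-comparison} only separates different valences, whereas the failure already occurs inside a single layer $\nabla(\lambda^{(i)})^{\oplus n_i}$.

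You were right to distrust generation by highest weight vectors. The paper's own induction relies on exactly that. It treats $\mathbb U z'_{h_i,\lambda^{(i)}}$, which is a quotient of $\Delta(\lambda^{(i)})$, as isomorphic to $\nabla(\lambda^{(i)})$ via Lemma \ref{BP2}. It then asserts that $M_i/M_{i-1}$ is $\mathbb U$-generated by $\widetilde N\mathfrak{B}_{n,K}$ via Proposition \ref{mainprop2}.

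The missing idea is to use $V^{\otimes n-2f}$ itself as the source, not $\Delta(\lambda)$.

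1. Take $\lambda\in\pi_f$ with $\lambda\vdash n-2f-2r$. Dualize the embedding of Corollary \ref{cor45}, using $(V^{\otimes k})^*\cong V^{\otimes k}$ and $\Delta(\lambda)^*\cong\nabla(\lambda)$ in type $C$. This gives a surjection $V^{\otimes n-2f-2r}\twoheadrightarrow\nabla(\lambda)$.
2. Precompose with the surjective contraction $E^{\otimes r}\otimes\id_{V^{\otimes n-2f-2r}}$. This gives $V^{\otimes n-2f}\twoheadrightarrow\nabla(\lambda)$.
3. Since $V^{\otimes n-2f}\in\mathcal F(\Delta)$ and $M_{i-1}\in\mathcal F(\nabla)$, we have $\Ext^1_{\mathbb U}(V^{\otimes n-2f},M_{i-1})=0$. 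Hence these surjections onto $M_i/M_{i-1}\cong\nabla(\lambda^{(i)})^{\oplus n_i}$ lift to maps into $M_i$.
4. Therefore $M_i=M_{i-1}+\sum_{\phi:V^{\otimes n-2f}\to M_i}\Image\phi$. Induction together with Lemma \ref{keylem1} then gives $M_t\subseteq S$. This is the inclusion your dimension count needs.
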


\begin{proof}
	By Lemma \ref{weight-functor O}(2), $\mathcal{O}_{\pi_f}(V^{\otimes n})$ has a good filtration with
	\[
	\dim \mathcal{O}_{\pi_f}(V^{\otimes n}) = \sum_{\lambda\in\pi_f} (V^{\otimes n}:\nabla(\lambda)) \dim\nabla(\lambda).
	\]
	The multiplicities $(V^{\otimes n}:\nabla(\lambda))$ and dimensions $\dim\nabla(\lambda)$ are independent of $K$ (determined by the integral form of $\mathbb U$). Thus $\dim \mathcal{O}_{\pi_f}(V^{\otimes n})$ is independent of $K$. 
	
	Since the filtration $M_\bullet$ satisfies
	\[
	\dim M_t = \sum_{\lambda\in\pi_f} (V^{\otimes n}:\nabla(\lambda)) \dim\nabla(\lambda) = \dim \mathcal{O}_{\pi_f}(V^{\otimes n}),
	\]
	we conclude $\dim M_t$ is independent of $K$ and equals $\dim \mathcal{O}_{\pi_f}(V^{\otimes n})$.
	
	By Lemma \ref{keylem1},
	\[
	\sum_{\substack{\phi: V^{\otimes n-2f} \to V^{\otimes n}}} \Image\phi = V^{\otimes n}\mathfrak{B}_{n,K}^{(f)}.
	\]
	Lemma \ref{O preserve partial tensor} gives $V^{\otimes n}\mathfrak{B}_{n,K}^{(f)} \subseteq \mathcal{O}_{\pi_f}(V^{\otimes n})$, hence
	\[
	\dim \left( \sum_{\phi} \Image\phi \right) = \dim \left( V^{\otimes n}\mathfrak{B}_{n,K}^{(f)} \right) \leq \dim \mathcal{O}_{\pi_f}(V^{\otimes n}) = \dim M_t.
	\]
	To establish equality, we prove $M_t \subseteq \sum_{\phi} \Image\phi$ by induction on $i$ ($0 \leq i \leq t$).
	
For  the case $i=0$: This is trivial as $M_0 = 0$.
	
Assume $M_{i-1} \subseteq \sum_{\phi} \Image\phi$ for $1 \leq i \leq t$. 
 Since $\lambda^{(i)} \in \pi_f$, write $g_i = f + h_i$ ($h_i \geq 0$). Consider the embedding
	\[
	\iota_f: V^{\otimes n-2f} \hookrightarrow V^{\otimes n}, \quad v \mapsto \alpha^{\otimes f} \otimes v.
	\]
Let $\{M'_j\}$ be a good filtration of $V^{\otimes n-2f}$ with $M'_j/M'_{j-1} \cong \nabla(\mu^{(j)})$. Then
\[
z_{g_i,\lambda^{(i)}} = \iota_f(z'_{h_i,\lambda^{(i)}}) \in \iota_f(M'_{j_i}) \subseteq M_i
\]
where $z'_{h_i,\lambda^{(i)}}$ is a maximal vector in $V^{\otimes n-2f}$.

	The submodule $N' = \mathbb{U} z'_{h_i,\lambda^{(i)}} \subseteq M'_{j_i}$ satisfies $N' \cong \nabla(\lambda^{(i)})$ (Lemma \ref{BP2}). Thus $\widetilde{N} := M_{i-1} + \iota_f(N')$ satisfies:
	\[
	\widetilde{N}/M_{i-1} \cong \iota_f(N') \cong \nabla(\lambda^{(i)}).
	\]
	By induction, $M_{i-1} \subseteq \sum_{\phi} \Image\phi$ and $\iota_f(N') \subseteq \Image(\iota_f) \subseteq \sum_{\phi} \Image\phi$, so \[
	\widetilde{N} \subseteq \sum\nolimits_{\phi} \Image\phi = V^{\otimes n}\mathfrak{B}_{n,K}^{(f)}.
	\]
	
By Proposition \ref{mainprop2}, $M_i/M_{i-1}$ is generated by $\widetilde{N}\mathfrak{B}_{n,K}$ as $\mathbb{U}$-module. Since $\widetilde{N} \subseteq V^{\otimes n}\mathfrak{B}_{n,K}^{(f)}$ and $\mathfrak{B}_{n,K}^{(f)}$ is an ideal:
\[
\widetilde{N}\mathfrak{B}_{n,K}\subseteq V^{\otimes n}\mathfrak{B}_{n,K}^{(f)}\mathfrak{B}_{n,K} = V^{\otimes n}\mathfrak{B}_{n,K}^{(f)}.
\]
Thus $M_i \subseteq V^{\otimes n}\mathfrak{B}_{n,K}^{(f)}$, completing the induction.

Finally, since $\dim M_t = \dim \mathcal{O}_{\pi_f}(V^{\otimes n}) = \dim V^{\otimes n}\mathfrak{B}_{n,K}^{(f)}$, we have
\[
M_t = V^{\otimes n}\mathfrak{B}_{n,K}^{(f)} = \mathcal{O}_{\pi_f}(V^{\otimes n}). \qedhere
\]
\end{proof}

Our main result in this section is the existence of good filtrations for the submodules and quotients.

\begin{theorem}\label{mainthm1} 
	Let $K$ be an algebraically closed field. For each integer $1\leq f\leq \left\lfloor \frac n2 \right\rfloor$, both $V^{\otimes n}/V^{\otimes n}\mathfrak{B}_{n,K}^{(f)}$ and $V^{\otimes n}\mathfrak{B}_{n,K}^{(f)}$ have a good filtration as $\mathbb{U}$-modules.
\end{theorem}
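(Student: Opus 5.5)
The plan is to read both statements off the filtration \eqref{eq:good-filtration} together with Theorem \ref{keystep3}. By Theorem \ref{keystep3} we have $V^{\otimes n}\mathfrak{B}^{(f)}_{n,K}=M_t$, where $M_t$ is the $t$-th term of the filtration $0=M_0\subset M_1\subset\cdots\subset M_p=V^{\otimes n}$. By property (1) of that filtration, every layer $M_i/M_{i-1}$ is a direct sum $\nabla(\lambda^{(i)})^{\oplus n_i}$.

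\textbf{The submodule.} Refine each layer by splitting off its summands one at a time. The chain $0=M_0\subset\cdots\subset M_t$ then refines to a filtration whose subquotients are all of the form $\nabla(\lambda^{(i)})$ with $\lambda^{(i)}\in\pi_f$. Hence $V^{\otimes n}\mathfrak{B}^{(f)}_{n,K}=M_t\in\mathcal{F}(\nabla)$. One can also see this from Lemma \ref{weight-functor O}(2): $V^{\otimes n}$ is tilting, so it has a good filtration, and therefore $\mathcal{O}_{\pi_f}(V^{\otimes n})$ does too.

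\textbf{The quotient.} Consider the induced chain
\[
0=M_t/M_t\subset M_{t+1}/M_t\subset\cdots\subset M_p/M_t=V^{\otimes n}/V^{\otimes n}\mathfrak{B}^{(f)}_{n,K}.
\]
Its layers are $(M_i/M_t)/(M_{i-1}/M_t)\cong M_i/M_{i-1}\cong\nabla(\lambda^{(i)})^{\oplus n_i}$ for $t<i\le p$. Refining these direct sums as above gives a good filtration of the quotient. The multiplicities are $[V^{\otimes n}:\nabla(\lambda)]$ for $\lambda\notin\pi_f$ and $0$ otherwise, which is consistent with the dimension count in Theorem \ref{keystep3}.

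\textbf{Main obstacle.} The real content is the existence of the filtration \eqref{eq:good-filtration} with property (3), namely that the $\pi_f$-weights can be placed first. The quotient argument depends entirely on this. I would justify it as follows.

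By Lemma \ref{lem:weight-comparison}, no weight of larger valence lies above one of smaller valence. So $\pi_f$ is a coideal (downward closed) for $\le$ among the dominant weights of $V^{\otimes n}$. Since $V^{\otimes n}$ has a good filtration, it has one ordered compatibly with any linear extension of the dominance order in which smaller weights come first. This uses the Ext-vanishing in Lemma \ref{homological properties}(2): whenever $\lambda\not>\mu$ we have $\Ext^1(\nabla(\lambda),\nabla(\mu))=0$, so adjacent layers can be swapped. Choosing a linear extension that lists $\pi_f$ first is legitimate precisely because $\pi_f$ is downward closed. This yields property (3).

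As a cross-check I would use the standard fact (Donkin) that for a saturated set $\pi$ and a module $M$ with a good filtration, $M/\mathcal{O}_{\pi}(M)$ also has a good filtration. Saturation of $\pi_f$ again follows from Lemma \ref{lem:weight-comparison} together with the explicit description of $\pi_f$. Combined with $\mathcal{O}_{\pi_f}(V^{\otimes n})=V^{\otimes n}\mathfrak{B}^{(f)}_{n,K}$, this gives the quotient statement directly.
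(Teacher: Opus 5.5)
Your proof is correct, but for the quotient it takes a different route from the paper.

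\textbf{The paper's route.} It handles the submodule exactly as in your secondary remark: Theorem~\ref{keystep3} gives $V^{\otimes n}\mathfrak{B}^{(f)}_{n,K}=\mathcal{O}_{\pi_f}(V^{\otimes n})$, and Lemma~\ref{weight-functor O}(2) gives this module a good filtration. For the quotient it argues purely cohomologically. Both $V^{\otimes n}$ and $V^{\otimes n}\mathfrak{B}^{(f)}_{n,K}$ have good filtrations, so \cite[Part~II, 4.17]{Jan} forces the cokernel to have one. That result rests on the $\Ext^1_{\mathbb U}(\Delta(\lambda),-)$-criterion and the long exact sequence.

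\textbf{Your route.} You read the quotient's good filtration directly off the ordered filtration \eqref{eq:good-filtration}, as the image chain $M_i/M_t$ for $t<i\le p$. This has three advantages.
\begin{itemize}
\item It avoids the Ext-criterion for good filtrations.
\item It gives the quotient's multiplicities explicitly: $[V^{\otimes n}:\nabla(\lambda)]$ for $\lambda\notin\pi_f$, and $0$ otherwise.
\item Your swap argument actually proves that a filtration with property~(3) exists, which the paper only asserts. The argument uses Lemma~\ref{homological properties}(2), together with the fact from Lemma~\ref{lem:weight-comparison} that $\pi_f$ is downward closed and so can be listed first in a linear extension.
\end{itemize}

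Your dependence on Theorem~\ref{keystep3} is no stronger than the paper's. Any filtration with property~(3) automatically satisfies $M_t=\mathcal{O}_{\pi_f}(V^{\otimes n})$. Indeed, $M_t\in\mathcal{C}(\pi_f)$ because $\pi_f$ is saturated, so $M_t\subseteq\mathcal{O}_{\pi_f}(V^{\otimes n})$; and the two have equal dimension by Lemma~\ref{weight-functor O}(2). So you only need the final equality $V^{\otimes n}\mathfrak{B}^{(f)}_{n,K}=\mathcal{O}_{\pi_f}(V^{\otimes n})$, exactly as the paper does.

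The paper's route has its own advantage: it does not care how any filtration is ordered, and it applies to any submodule with a good filtration, not only to truncations.

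A minor point of terminology: ``coideal'' usually means upward closed. What you actually use, correctly, is that $\pi_f$ is an order ideal, i.e.\ a saturated set.
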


\begin{proof}
	By Theorem~\ref{keystep3} and Lemma~\ref{weight-functor O}(1), we have
	\[
	V^{\otimes n}\mathfrak{B}_{n,K}^{(f)} = \mathcal{O}_{\pi_f}(V^{\otimes n}).
	\]
	Moreover, Theorem~\ref{keystep3} shows that $\mathcal{O}_{\pi_f}(V^{\otimes n})$
	admits a good filtration with successive quotients $\nabla(\lambda)$ for $\lambda\in\pi_f$.
	
	Since $V$ is a tilting $\mathbb{U}$-module, $V^{\otimes n}$ is also tilting and hence has a good filtration.
	Therefore, applying Jantzen~\cite[Part~II, 4.17]{Jan} to the short exact sequence
	\[
	0\to V^{\otimes n}\mathfrak{B}_{n,K}^{(f)}\to V^{\otimes n}\to V^{\otimes n}/V^{\otimes n}\mathfrak{B}_{n,K}^{(f)}\to 0,
	\]
	we obtain that the quotient $V^{\otimes n}/V^{\otimes n}\mathfrak{B}_{n,K}^{(f)}$ also has a good filtration.
	
	Similarly, $V^{\otimes n}\mathfrak{B}_{n,K}^{(f)}/V^{\otimes n}\mathfrak{B}_{n,K}^{(f+1)}$ has a good filtration,
	since it is isomorphic to $\mathcal{O}_{\pi_f}(V^{\otimes n})/\mathcal{O}_{\pi_{f+1}}(V^{\otimes n})$.
\end{proof}

As an immediate consequence, the preceding filtration results can be rephrased in terms of
quantum partially harmonic tensors.  We write $\mathcal{HT}_{f,q}^{\otimes n}$ for the quantized
analogue of $\mathcal{HT}_f^{\otimes n}$, namely
\[
\mathcal{HT}_{f,q}^{\otimes n}
:=\left\{\,v\in V^{\otimes n}\mathfrak{B}^{(f)}_{n,K}\ \middle|\ vx=0,\ \forall x\in \mathfrak{B}^{(f+1)}_{n,K}\right\}.
\]
In view of Corollary~\ref{maincor1} blow, $\mathcal{HT}_{f,q}^{\otimes n}$ can be identified with
(the dual of) the corresponding layer
$V^{\otimes n}\mathfrak{B}^{(f)}_{n,K}/V^{\otimes n}\mathfrak{B}^{(f+1)}_{n,K}$.

\begin{corollary}\label{maincor15}
	Let $K$ be an algebraically closed field and let $0\le f\le \left\lfloor \frac n2 \right\rfloor$.
	\begin{enumerate}
		\item[(1)]  The $\mathbb U$-module $V^{\otimes n}\mathfrak{B}^{(f)}_{n,K}/V^{\otimes n}\mathfrak{B}^{(f+1)}_{n,K}$
		admits a good filtration.
		
		\item[(2)]  The $\mathbb U$-module $\mathcal{HT}_{f,q}^{\otimes n}$ admits a Weyl filtration.
	\end{enumerate}
\end{corollary}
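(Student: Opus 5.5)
For part (1), I will identify the layer with a quotient of truncations and use the good filtrations already in hand. By Theorem~\ref{keystep3}, applied with $f$ and with $f+1$, we have
\[
V^{\otimes n}\mathfrak{B}^{(f)}_{n,K}=\mathcal{O}_{\pi_f}(V^{\otimes n})=M_t,\qquad
V^{\otimes n}\mathfrak{B}^{(f+1)}_{n,K}=\mathcal{O}_{\pi_{f+1}}(V^{\otimes n}).
\]
Since $\pi_{f+1}\subseteq\pi_f$, the submodule $\mathcal{O}_{\pi_{f+1}}(V^{\otimes n})$ is contained in $\mathcal{O}_{\pi_f}(V^{\otimes n})$. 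We may choose the filtration \eqref{eq:good-filtration} to satisfy condition (3) for $f$ and for $f+1$ simultaneously. To do this, order the weights so that those of valence $\geq f+1$ come first, then those of valence exactly $f$, then the rest. Lemma~\ref{lem:weight-comparison} shows that this ordering is compatible with condition (2). With this choice, $\mathcal{O}_{\pi_{f+1}}(V^{\otimes n})=M_{t'}$ for some $t'\le t$. The layer $M_t/M_{t'}$ is then filtered by the $M_i/M_{t'}$ for $t'\le i\le t$, whose subquotients are direct sums of $\nabla(\lambda^{(i)})$. Hence it has a good filtration. As an alternative route, both modules have good filtrations by Theorem~\ref{mainthm1}, and Lemma~\ref{weight-functor O}(2) gives the inclusion with the correct multiplicities. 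The quotient $\mathcal{O}_{\pi_f}(M)/\mathcal{O}_{\pi_{f+1}}(M)$ of a good-filtration module $M$ then has a good filtration by Jantzen's criterion: $\Ext^1(\Delta(\mu),-)$ vanishes, via the long exact sequence and the fact that $\Ext^1(\Delta(\mu),\mathcal{O}_{\pi_{f+1}}(M))$ vanishes.

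For part (2), I will pass to duality. The plan is to establish that $\mathcal{HT}^{\otimes n}_{f,q}$ is the $\mathbb{U}$-dual, suitably twisted, of the layer in (1). The contravariant form on $V^{\otimes n}$ is induced by $\langle\ ,\ \rangle$, or equivalently by the RT-functor pairing $F(U_n)$. I will check that the adjoint of the right action of $x\in\mathfrak{B}_{n,K}$ with respect to this form is $x^\ast$, where $\ast$ is the anti-involution coming from the functor $^*$ on $\mathbb{D}$. The ideals $\mathfrak{B}^{(f)}_{n,K}$ are $\ast$-stable, since $E_1E_3\cdots E_{2f-1}$ is $\ast$-fixed. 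Consequently the annihilator of $V^{\otimes n}\mathfrak{B}^{(f+1)}_{n,K}$ is $\{v: vx=0,\ \forall x\in\mathfrak{B}^{(f+1)}_{n,K}\}$. Intersecting this annihilator with $V^{\otimes n}\mathfrak{B}^{(f)}_{n,K}$ yields $\mathcal{HT}^{\otimes n}_{f,q}$, and this gives a perfect pairing between $\mathcal{HT}^{\otimes n}_{f,q}$ and the layer.

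The main obstacle is this identification: we need a nondegenerate pairing between the subspace $V^{\otimes n}\mathfrak{B}^{(f)}$ and the quotient. I would attack it by dimension counting. First, I will show that the form restricted to $V^{\otimes n}\mathfrak{B}^{(f)}\times V^{\otimes n}\mathfrak{B}^{(f)}$ has radical containing the annihilator piece. Then I will use self-duality of the tilting module $V^{\otimes n}$: under the form, the orthogonal of $\mathcal{O}_{\pi}(V^{\otimes n})$ is $V^{\otimes n}$ modulo the dual truncation, and its dimension is computed via Lemma~\ref{weight-functor O}(2). From this I will conclude
\[
\mathcal{HT}^{\otimes n}_{f,q}\cong\bigl(V^{\otimes n}\mathfrak{B}^{(f)}_{n,K}/V^{\otimes n}\mathfrak{B}^{(f+1)}_{n,K}\bigr)^{\ast}.
\]
Finally, the dual of a module with a good filtration has a Weyl filtration, since $\nabla(\lambda)^\ast\cong\Delta(-w_0\lambda)=\Delta(\lambda)$ in type $C$. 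Combined with (1), this gives (2).
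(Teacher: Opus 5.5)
Your part (1) is correct and is essentially the paper's argument. Your second route is exactly the paper's: Theorem~\ref{mainthm1} together with \cite[Part~II, 4.17]{Jan}. Your first route is the remark at the end of the proof of Theorem~\ref{mainthm1}. One small slip: in the long exact sequence the vanishing you need is $\Ext^2_{\mathbb U}(\Delta(\mu),\mathcal{O}_{\pi_{f+1}}(M))=0$, not $\Ext^1$. It holds for the same reason.

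In (2) your strategy is the paper's: identify $\mathcal{HT}_{f,q}^{\otimes n}$ with the dual of the layer, then dualize. The paper gets the identification by citing Corollary~\ref{maincor1}(2). The argument you propose for it fails.

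Put $P_g:=V^{\otimes n}\mathfrak{B}^{(g)}_{n,K}$ and $A_g:=\{w\mid w\mathfrak{B}^{(g)}_{n,K}=0\}=P_g^{\perp}$. The vectors of $\mathcal{HT}_{f,q}^{\otimes n}=P_f\cap A_{f+1}$ that pair to zero with the whole layer form $P_f\cap A_f$. This is exactly the radical of the form on $P_f$ that you mention. So your pairing is perfect if and only if the form is nondegenerate on $V^{\otimes n}\mathfrak{B}^{(f)}_{n,K}$, and that fails in general.

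Concrete counterexample: suppose the loop value $x=1-\frac{q^{2m+1}-q^{-2m-1}}{q-q^{-1}}$ is $0$ in $K$, e.g. $m=2$, $q^2=-1$ (the corollary puts no condition on $q$). Take $n=2$, $f=1$. Then:
\begin{itemize}
\item $\mathfrak{B}^{(2)}_{2,K}=0$, so $\mathcal{HT}_{1,q}^{\otimes 2}=P_1=K\alpha$, which is also the layer;
\item writing $\alpha=uE_1$ gives $\alpha E_1=uE_1^2=x\alpha=0$, so $\alpha\in P_1\cap A_1$;
\item hence the pairing between these two one-dimensional spaces is identically zero.
\end{itemize}

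Dimension counting does not repair this. Since $(\mathcal{HT}_{f,q}^{\otimes n})^{\perp}=A_f+P_{f+1}$, one gets
\[
\dim\mathcal{HT}_{f,q}^{\otimes n}=\dim\bigl(P_f/P_{f+1}\bigr)+\dim\bigl(P_{f+1}\cap A_f\bigr).
\]
So even equality of dimensions needs the extra vanishing $P_{f+1}\cap\mathrm{Ann}(\mathfrak{B}^{(f)}_{n,K})=0$, which your plan never addresses. And equal dimensions would still not give a $\mathbb U$-isomorphism.

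What the form does give is $\mathcal{HT}_{f,q}^{\otimes n}\cong\bigl(V^{\otimes n}/(A_f+P_{f+1})\bigr)^{*}$. So (2) would follow from a good filtration on $V^{\otimes n}/(A_f+P_{f+1})$. The natural map from the layer to this quotient is an isomorphism exactly when $P_f\cap A_f=0$, so proving that good filtration is the missing idea.

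Simply citing Corollary~\ref{maincor1}(2), as the paper does, would make (2) one line. But the paper's proof of that corollary is the same one-line appeal to a ``natural pairing'', so it has the same gap.
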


\begin{proof}
	(1) By Theorem~\ref{mainthm1}, both $V^{\otimes n}\mathfrak{B}^{(f)}_{n,K}$ and
	$V^{\otimes n}\mathfrak{B}^{(f+1)}_{n,K}$ admit good filtrations. Hence the quotient
	$V^{\otimes n}\mathfrak{B}^{(f)}_{n,K}/V^{\otimes n}\mathfrak{B}^{(f+1)}_{n,K}$ also admits a good filtration,
	by \cite[Part~II, 4.17]{Jan}.
	
	(2) By Corollary~\ref{maincor1} blow, $V^{\otimes n}\mathfrak{B}^{(f)}_{n,K}/V^{\otimes n}\mathfrak{B}^{(f+1)}_{n,K}$
	is (canonically) isomorphic to $(\mathcal{HT}_{f,q}^{\otimes n})^*$. Dualizing the good filtration in~(1)
	yields a Weyl filtration on $\mathcal{HT}_{f,q}^{\otimes n}$.
\end{proof}

\subsection{Characters of harmonic tensor}\label{subsec:characters}
We now turn to the characters of harmonic tensor modules.
To compare characters over different base fields and to establish base-field independence,
we use integral forms and study the behavior of the BMW ideals under base change.

Let $\mathfrak{B}_{n,\mathscr{A}}$ be the $\A$-form of the BMW  algebra. The following result shows that the  ideals of $\mathfrak{B}_{n,\mathscr{A}}$ are $\A$-free and behave well under base change.

	\begin{lemma}\label{lem:cellular-basechange}
	For each $1\le f\le \left\lfloor \frac n2 \right\rfloor$, the $\A$-algebra $\mathfrak{B}_{n,\mathscr{A}}$ is $\A$-free and the two-sided ideal $\mathfrak{B}^{(f)}_{n,\mathscr{A}}$ is $\A$-free. Moreover, for any commutative $\A$-algebra $R$ there is a canonical isomorphism
	\[
	\mathfrak{B}^{(f)}_{n,\mathscr{A}}\otimes_{\A}R \;\xrightarrow{\ \cong\ }\; \mathfrak{B}^{(f)}_{n,R}.
	\]
	Consequently, for any $\A$-module $X$ the formation of the submodule generated by $\mathfrak{B}^{(f)}_{n,\mathscr{A}}$ commutes with base change: in the commutative diagram
	\[
	\begin{tikzcd}
		X\otimes_{\A}\mathfrak{B}^{(f)}_{n,\mathscr{A}} \ar[r, "\mu_X"] \ar[d, "-\otimes_{\A}R"'] &
		X \ar[d, "-\otimes_{\A}R"] \\
		(X\otimes_{\A}R)\otimes_{R}\mathfrak{B}^{(f)}_{n,R} \ar[r, "\mu_{X\otimes R}"] &
		X\otimes_{\A}R
	\end{tikzcd}
	\]
	the left vertical arrow is an isomorphism and the images of the horizontal maps correspond. In particular,
	\[
	V_{\A}^{\otimes n}\mathfrak{B}^{(f)}_{n,\mathscr{A}}\otimes_{\A}R \;\cong\; V_{R}^{\otimes n}\mathfrak{B}^{(f)}_{n,R}.
	\]
\end{lemma}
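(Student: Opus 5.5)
The plan is to use the cellular structure of the BMW algebra over $\A$, in the sense of Graham--Lehrer, as established by Xi and by Enyang. First, $\mathfrak{B}_{n,\A}$ has an $\A$-basis of Murphy type
\[
\{\,C^{\lambda}_{\mathfrak{s}\mathfrak{t}} \mid (f',\lambda)\in\Lambda,\ \mathfrak{s},\mathfrak{t}\in \mathcal{T}(f',\lambda)\,\},
\]
indexed by pairs $(f',\lambda)$ with $0\le f'\le \lfloor n/2\rfloor$ and $\lambda\vdash n-2f'$, and by suitable tableaux. The ordering puts larger $f'$ lower. This gives that $\mathfrak{B}_{n,\A}$ is $\A$-free, with rank $(2n-1)!!$. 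Base change is also fine here, because the generic presentation in Definition~\ref{def BMW} specializes to $\mathfrak{B}_{n,R}$ and the cellular basis specializes to a basis. Equivalently, under Theorem~\ref{iso between tangle and BMW} one can take the basis of reduced Brauer-type tangle diagrams.

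\textbf{Step 1: a basis for the ideal.} I will show that $\mathfrak{B}^{(f)}_{n,\A}$ equals the $\A$-span of those basis elements with $f'\ge f$. The containment ``$\supseteq$'' is diagrammatic. A tangle with at least $f$ horizontal arcs in its top row can be written, using regular isotopy together with RII and RIII, as $a\,(E_1E_3\cdots E_{2f-1})\,b$ for tangles $a,b$. The containment ``$\subseteq$'' holds because the span is a two-sided ideal. Here I use the standard BMW fact that multiplying a diagram by generators never decreases its number of horizontal arcs, modulo the Q1 skein relation, whose correction terms have at least as many arcs. The generator $E_1E_3\cdots E_{2f-1}$ itself lies in the span. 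This yields $\A$-freeness of $\mathfrak{B}^{(f)}_{n,\A}$, and moreover that it is an $\A$-direct summand of $\mathfrak{B}_{n,\A}$, with complement spanned by the basis elements with $f'<f$.

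\textbf{Step 2: base change of the ideal.} Because $\mathfrak{B}^{(f)}_{n,\A}$ is a direct summand, the map
\[
\mathfrak{B}^{(f)}_{n,\A}\otimes_\A R\to\mathfrak{B}_{n,\A}\otimes_\A R=\mathfrak{B}_{n,R}
\]
is injective. Its image is the $R$-span of the specialized basis elements with $f'\ge f$. By the same argument as in Step 1, now carried out over $R$, this span is exactly the ideal of $\mathfrak{B}_{n,R}$ generated by $E_1E_3\cdots E_{2f-1}$, namely $\mathfrak{B}^{(f)}_{n,R}$.

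\textbf{Step 3: the diagram and its consequence.} The left vertical arrow in the diagram is the canonical isomorphism
\[
X\otimes_\A \mathfrak{B}^{(f)}_{n,\A}\otimes_\A R\cong (X\otimes_\A R)\otimes_R(\mathfrak{B}^{(f)}_{n,\A}\otimes_\A R),
\]
composed with the isomorphism from Step 2. The diagram commutes because the action map commutes with $-\otimes_\A R$. Right exactness of $\otimes$ then gives that the image of $\mu_X$, tensored with $R$, surjects onto the image of $\mu_{X\otimes R}$; this is the sense in which ``images correspond''.

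For the final isomorphism $V_\A^{\otimes n}\mathfrak{B}^{(f)}_{n,\A}\otimes_\A R\cong V_R^{\otimes n}\mathfrak{B}^{(f)}_{n,R}$, surjectivity alone is not enough; injectivity is also needed. I will get it from the fact that $W:=V_\A^{\otimes n}/V_\A^{\otimes n}\mathfrak{B}^{(f)}_{n,\A}$ is $\A$-free, hence $\operatorname{Tor}_1^\A(W,R)=0$. This freeness follows from Theorem~\ref{keystep3}: the dimension of $V^{\otimes n}/V^{\otimes n}\mathfrak{B}^{(f)}_{n,K}$ is independent of $K$ for every specialization, including residue fields of $\A$. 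Since $W$ is finitely generated over the Noetherian domain $\A$ and its fiber dimensions are constant, $W$ is projective. The ring $\A=\mathbb{Z}[q,q^{-1}]$ has dimension two, so this step needs care: constant fiber dimension over all primes implies locally free for a finitely generated module over a reduced Noetherian ring. This is the main obstacle, namely making sure the independence statement covers all residue fields, finite ones included. If it does not, I will fall back on the explicit spanning set $(\alpha^{\otimes f}\otimes v)T_\sigma$ over $\A$ to produce an $\A$-basis directly.
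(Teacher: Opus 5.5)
\textbf{Step 1 has a genuine error, although its conclusion is true and citable.} Your two diagrammatic claims in Step 1 are false for general tangles, and one example with $n=2$, $f=1$ refutes both.

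Let $C\in\mathcal{D}(2,2)$ be the clasp: a top arc and a bottom arc hooked through each other once, i.e. the $90^\circ$ rotation of $X\circ X$. It has one horizontal arc in each row. Apply Q1 at one of its two crossings.
\begin{itemize}
\item Switching that crossing gives, after RII, exactly $E=U\circ A$.
\item Both smoothings of a crossing between a top arc and a bottom arc join top endpoints to bottom endpoints. So both correction terms have two through strands: one is a single crossing $X^{\pm1}$, the other is an identity with a kink, $r^{\pm1}I_2$.
\end{itemize}
Hence $C=E\pm(q-q^{-1})\bigl(X^{\pm1}-r^{\pm1}I_2\bigr)$. Its image in $\mathfrak{B}_2/\mathfrak{B}^{(1)}_2$, which is the Hecke algebra with basis $1,T_1$, is nonzero. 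So $C\notin\mathfrak{B}^{(1)}_2=KE_1$, and therefore $C$ cannot be written as $a\,E_1\,b$.

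The same example shows that Q1 correction terms can have fewer horizontal arcs. In fact, the span of all tangles with at least $f$ horizontal arcs is a two-sided ideal, by the connectivity argument, but it is strictly larger than $\mathfrak{B}^{(f)}$. The correct diagrammatic notion is tangles that factor through $n-2f$ points, not tangles with $\ge f$ arcs.

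The repair is to do what the paper does: cite Xi and Enyang for the fact that $\mathfrak{B}^{(f)}_{n,\mathscr{A}}$ is a cell ideal, i.e. it is spanned by the cellular basis elements with $f'\ge f$. With that citation, your Steps 1--2 coincide with the paper's argument. Your direct-summand remark is the right reason the base-change map into $\mathfrak{B}_{n,R}$ is injective. You also do not need to redo Step 1 over $R$: the image of $\mathfrak{B}^{(f)}_{n,\mathscr{A}}\otimes_{\mathscr{A}}R$ is automatically the ideal of $\mathfrak{B}_{n,R}$ generated by $E_1E_3\cdots E_{2f-1}$.

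\textbf{The final isomorphism: your route is correct and more careful than the paper's proof of this lemma.} The paper goes from base change of the ideal to $V_{\mathscr{A}}^{\otimes n}\mathfrak{B}^{(f)}_{n,\mathscr{A}}\otimes_{\mathscr{A}}R\cong V_R^{\otimes n}\mathfrak{B}^{(f)}_{n,R}$ with a bare ``consequently''. That step only yields surjectivity. Later, in Step 1 of Theorem~\ref{mainthm0}, the paper uses only surjectivity from this lemma and re-derives injectivity by a dimension count.

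Your argument runs as follows:
\begin{itemize}
\item By right exactness and your surjectivity, the fibre of $W$ at $\mathfrak{p}$ is $V_{\kappa}^{\otimes n}/V_{\kappa}^{\otimes n}\mathfrak{B}^{(f)}_{n,\kappa}$.
\item Its dimension is constant by Theorem~\ref{keystep3}.
\item So $W$ is projective over the reduced Noetherian ring $\mathscr{A}$, and $\mathrm{Tor}_1^{\mathscr{A}}(W,R)=0$.
\end{itemize}
This is sound, and it is not circular, because Theorem~\ref{keystep3} is proved without this lemma.

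The obstacle you flag is harmless. We have $\dim_{\kappa}(W\otimes\kappa)=\dim_{\bar\kappa}(W\otimes\bar\kappa)$, and $\bar\kappa$ is infinite and algebraically closed, so finite residue fields are covered. Projectivity already gives the Tor vanishing, so you need neither the Quillen--Suslin step nor your fallback.
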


\begin{proof}
	It is well known that the specialized BMW algebra $\mathfrak{B}_{n,\mathscr A}$ admits a cellular
	basis in the sense of Graham--Lehrer  \cite{GrahamLehrer96}, hence it is free as an $\mathscr A$-module.
	Moreover, for any $\lambda\in\Lambda$ the cellular ideal
	$\mathsf J^{\unrhd\lambda}(\mathfrak{B}_{n,\mathscr A})$ is also $\mathscr A$-free.
	Therefore, for any commutative $\mathscr A$-algebra $R$ we have a canonical isomorphism
	\[
	\mathsf J^{\unrhd\lambda}(\mathfrak{B}_{n,\mathscr A})\otimes_{\mathscr A}R
	\;\cong\;
	\mathsf J^{\unrhd\lambda}(\mathfrak{B}_{n,R}).
	\]
	
	On the other hand, it follows from \cite{Xi00,Enyang04} that, for each $f$,
	the two-sided ideal $\mathfrak{B}^{(f)}_{n,\mathscr A}$ generated by
	$E_1E_3\cdots E_{2f-1}$ is a cellular ideal of $\mathfrak{B}_{n,\mathscr A}$.
	Hence there exists $\lambda_f\in\Lambda$ such that
	$\mathfrak{B}^{(f)}_{n,\mathscr A}=\mathsf J^{\unrhd\lambda_f}(\mathfrak{B}_{n,\mathscr A})$.
	By the above base-change property, we obtain
	\[
	\mathfrak{B}^{(f)}_{n,\mathscr A}\otimes_{\mathscr A}R
	\;\cong\;
	\mathsf J^{\unrhd\lambda_f}(\mathfrak{B}_{n,\mathscr A})\otimes_{\mathscr A}R
	\;\cong\;
	\mathsf J^{\unrhd\lambda_f}(\mathfrak{B}_{n,R})
	\;=\;
	\mathfrak{B}^{(f)}_{n,R}.
	\]
	
	Consequently, the natural map
	\[
	V^{\otimes n}_{\mathscr A}\mathfrak{B}^{(f)}_{n,\mathscr A}\otimes_{\mathscr A}R
	\longrightarrow
	V^{\otimes n}_{R}\mathfrak{B}^{(f)}_{n,R}
	\]
	is an isomorphism, and the commutative diagram stated in the lemma follows.
\end{proof}

We now establish the integral form of the previous results.

\begin{theorem}\label{mainthm0}
	Let $\mathscr A=\mathbb Z[q,q^{-1}]$.  For each $f$ with $0\le f\le\lfloor n/2\rfloor$, set
	\[
	\mathfrak{B}^{(f)}_{n,\mathscr A}:=\big\langle E_1E_3\cdots E_{2f-1}\big\rangle
	\trianglelefteq \mathfrak{B}_{n,\mathscr A}.
	\]
	Then the following statements hold.
	
	\begin{enumerate}
		\item[(1)]  The $\mathscr A$-submodule $V_{\mathscr A}^{\otimes n}\mathfrak{B}^{(f)}_{n,\mathscr A}$
		is a pure submodule of the free $\mathscr A$-module $V_{\mathscr A}^{\otimes n}$.
		Equivalently, the quotient
		$V_{\mathscr A}^{\otimes n}/V_{\mathscr A}^{\otimes n}\mathfrak{B}^{(f)}_{n,\mathscr A}$
		is a free $\mathscr A$-module.
		
		\item[(2)]  For any commutative $\mathscr A$-algebra $R$, the canonical maps
		\[
		V_{\mathscr A}^{\otimes n}\mathfrak{B}^{(f)}_{n,\mathscr A}\otimes_{\mathscr A}R
		\longrightarrow
		V_{R}^{\otimes n}\mathfrak{B}^{(f)}_{n,R}
		~\text{and}~
		\Bigl(V_{\mathscr A}^{\otimes n}/V_{\mathscr A}^{\otimes n}\mathfrak{B}^{(f)}_{n,\mathscr A}\Bigr)\otimes_{\mathscr A}R
		\longrightarrow
		V_R^{\otimes n}/V_R^{\otimes n}\mathfrak{B}^{(f)}_{n,R}
		\]
		are isomorphisms.	
		In particular, for any field $K$ admitting an $\mathscr A$-algebra structure,
		the characters of the $\mathbb U$-modules
		$V^{\otimes n}_K\mathfrak{B}^{(f)}_{n,K}$ and
		$V^{\otimes n}_K / V^{\otimes n}_K\mathfrak{B}^{(f)}_{n,K}$
		are independent of the base field $K$.
		
	\end{enumerate}
\end{theorem}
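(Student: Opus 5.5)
The plan is to reduce both statements to a comparison of ranks after specialization, using Lemma~\ref{lem:cellular-basechange} together with the base-field independence of $\dim V^{\otimes n}_K\mathfrak{B}^{(f)}_{n,K}$ from Theorem~\ref{keystep3}. Write $W:=V_{\mathscr A}^{\otimes n}$, which is free of rank $(2m)^n$ over $\mathscr A$. Write $W^{(f)}:=W\mathfrak{B}^{(f)}_{n,\mathscr A}$. Since $\mathscr A$ is a domain and $W$ is free, the submodule $W^{(f)}$ is torsion free. It is also finitely generated, because $\mathfrak{B}^{(f)}_{n,\mathscr A}$ is $\mathscr A$-free of finite rank by Lemma~\ref{lem:cellular-basechange}.

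For (1), I will use the criterion that $W/W^{(f)}$ is free if and only if its rank equals $\dim_K (W/W^{(f)})\otimes_{\mathscr A}K$ for every specialization $\mathscr A\to K$ to a field. The direction used is the following: a finitely generated $\mathscr A$-module whose fibre dimension is constant at all points of $\operatorname{Spec}\mathscr A$, including the points of characteristic $p$, is projective. By Quillen--Suslin it is then free over $\mathbb Z[q,q^{-1}]$. One could instead argue purity directly, by showing that $W^{(f)}=W\cap (W^{(f)}\otimes\mathbb Q(q))$.

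Now compute the fibres. By right exactness of $\otimes$, we have $(W/W^{(f)})\otimes K=V_K^{\otimes n}/\operatorname{Im}(W^{(f)}\otimes K\to V_K^{\otimes n})$. By Lemma~\ref{lem:cellular-basechange}, this image is $V_K^{\otimes n}\mathfrak{B}^{(f)}_{n,K}$, since $W\otimes\mathfrak{B}^{(f)}_{n,\mathscr A}\otimes K\cong V_K^{\otimes n}\otimes\mathfrak{B}^{(f)}_{n,K}$ surjects onto it. Hence the fibre dimension equals $(2m)^n-\dim V_K^{\otimes n}\mathfrak{B}^{(f)}_{n,K}$.

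Theorem~\ref{keystep3} gives $\dim V_K^{\otimes n}\mathfrak{B}^{(f)}_{n,K}=\dim\mathcal O_{\pi_f}(V_K^{\otimes n})=\sum_{\lambda\in\pi_f}(V^{\otimes n}:\nabla(\lambda))\dim\nabla(\lambda)$, which is independent of $K$. That theorem is stated for infinite fields, so for residue fields of closed points, which are finite, I pass to the algebraic closure; dimensions are unchanged by field extension. The fibre dimension is therefore constant, and it equals the generic rank computed over $\mathbb Q(q)$. This gives (1).

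For (2), once $Q:=W/W^{(f)}$ is free, the sequence $0\to W^{(f)}\to W\to Q\to 0$ splits. Consequently $W^{(f)}\otimes R\to W\otimes R$ is injective for every $R$. The image of this map is $V_R^{\otimes n}\mathfrak{B}^{(f)}_{n,R}$, by the surjectivity argument above (Lemma~\ref{lem:cellular-basechange} over $R$). Both displayed maps are therefore isomorphisms, the second by the five lemma. The maps are $\mathbb U$-equivariant, so the weight space decompositions of $W^{(f)}$ and $Q$ base change compatibly. Each weight space of $W^{(f)}$ is a direct summand, since $W$ is the direct sum of its weight spaces spanned by the $v_{i_1}\otimes\cdots\otimes v_{i_n}$ and $W^{(f)}$ is stable under the Cartan part. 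Hence each weight space is free of constant rank, which gives independence of characters.

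The main obstacle is the step in (1) from constant fibre dimension to freeness over the two-dimensional ring $\mathbb Z[q,q^{-1}]$. I would first try to avoid projectivity arguments by a direct purity proof. Given $w\in W$ and $0\neq a\in\mathscr A$ with $aw\in W^{(f)}$, one needs to show $w\in W^{(f)}$. Localizing at each height-one prime $(\pi)$ and using the fibre-dimension equality at $\mathscr A/(\pi)$ shows that the rank of $W^{(f)}\otimes\mathscr A/(\pi)\to W\otimes\mathscr A/(\pi)$ is full. This makes the cokernel torsion free at $\pi$, so $w\in W^{(f)}_{(\pi)}$ for all $\pi$. Since $W/W^{(f)}$ is then torsion free, the purity statement in (1) follows. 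Care is needed in matching "pure" with "free", and I expect to invoke the reflexive-module/Quillen--Suslin argument there.
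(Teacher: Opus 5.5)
Your main argument is correct, but it runs in the opposite order from the paper's.

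\textbf{The paper's route.} It proves (2) first. It takes the surjection $\phi_R\colon W^{(f)}\otimes_{\mathscr A}R\to V_R^{\otimes n}\mathfrak{B}^{(f)}_{n,R}$ and claims that both sides have dimension $r_f=\operatorname{rank}_{\mathbb Z}V_{\mathbb Z}^{\otimes n}B_n^{(f)}$ after reduction to a residue field. It concludes by Nakayama. Only then does it deduce (1): it shows $\operatorname{Tor}_1^{\mathscr A}(\kappa(\mathfrak p),Q)=0$ for every prime, applies the local flatness criterion, and passes from finitely presented flat to projective to free via Quillen--Suslin/Swan.

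\textbf{Your route.} You prove (1) first. You compute the fibres of the cokernel $Q=W/W^{(f)}$ by right exactness:
$\dim_\kappa Q\otimes\kappa=(2m)^n-\dim_\kappa V_\kappa^{\otimes n}\mathfrak{B}^{(f)}_{n,\kappa}$.
Theorem~\ref{keystep3} makes this constant, so $Q$ is projective over the reduced noetherian ring $\mathscr A$, hence free. Then (2) follows at once from the splitting of $0\to W^{(f)}\to W\to Q\to 0$.

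\textbf{What your ordering buys.} You never need the fibre dimension of the submodule $W^{(f)}$ itself. The paper's claim $\dim_{\Bbbk}(W^{(f)}\otimes_{\mathscr A}\Bbbk)=r_f$ is equivalent to the injectivity it is trying to prove. Its Nakayama step also only gives injectivity of $(\phi_R)_{\mathfrak m}$ once the target is known to be flat. Fibres of a cokernel, by contrast, come directly from right exactness.

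\textbf{What both rely on.} Both arguments need the base-field independence of Theorem~\ref{keystep3} at every residue field of $\mathscr A$. You are more careful here than the paper, since you pass to $\bar\kappa$ at the finite residue fields of closed points. Both also need Quillen--Suslin for $\mathbb Z[q,q^{-1}]$.

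\textbf{Drop the fallback purity argument} in your last paragraph. Checking torsion-freeness only after localizing at height-one primes cannot see torsion supported at closed points. For example, with $W=\mathscr A$ and $W^{(f)}=(p,q-1)$, every height-one localization of the quotient vanishes, yet the quotient is torsion. A torsion-free quotient is also weaker than the free quotient asserted in (1). Your constant-fibre-rank argument already checks the closed points and needs no such supplement.
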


\begin{proof}
		Specializing $q\mapsto 1$ yields
	\[
	\mathfrak{B}_{n,\mathscr{A}}\otimes_{\A}\mathbb{Z}\cong B_n(-2m),\qquad
	\mathfrak{B}^{(f)}_{n,\mathscr{A}}\otimes_{\A}\mathbb{Z}\cong B_n^{(f)},\qquad
	V_{\A}^{\otimes n}\otimes_{\A}\mathbb{Z}\cong V_{\mathbb{Z}}^{\otimes n},
	\]
	and hence
	\[
V_{\mathscr A}^{\otimes n}\mathfrak{B}^{(f)}_{n,\mathscr A}\otimes_{\mathscr A}\mathbb Z
\;\cong\;
V_{\mathbb Z}^{\otimes n}B_n^{(f)}.
\]
	Set
	\[
	r_f:=\operatorname{rank}_{\mathbb Z}\bigl(V_{\mathbb Z}^{\otimes n}B_n^{(f)}\bigr).
	\]
	
	\smallskip
	\noindent\emph{Step 1: Proof of (2) for the submodule.}
	By Lemma~\ref{lem:cellular-basechange}, for every commutative $\mathscr A$-algebra $R$
	the natural map
	\[
	\phi_R:\ 
	V_{\mathscr A}^{\otimes n}\mathfrak{B}^{(f)}_{n,\mathscr A}\otimes_{\mathscr A}R
	\longrightarrow
	V_R^{\otimes n}\mathfrak{B}^{(f)}_{n,R}
	\]
	is surjective, and both sides are finitely presented $R$-modules.
	Let $\mathfrak m$ be a maximal ideal of $R$ and put $\Bbbk:=R/\mathfrak m$.
	Tensoring $\phi_R$ with $\Bbbk$ yields a surjective map
	\[
	\phi_R\otimes_R\Bbbk:\ 
	\bigl(V_{\mathscr A}^{\otimes n}\mathfrak{B}^{(f)}_{n,\mathscr A}\otimes_{\mathscr A}R\bigr)\otimes_R\Bbbk
	\longrightarrow
	V_{\Bbbk}^{\otimes n}\mathfrak{B}^{(f)}_{n,\Bbbk}.
	\]
	The left-hand side is canonically isomorphic to
	$V_{\mathscr A}^{\otimes n}\mathfrak{B}^{(f)}_{n,\mathscr A}\otimes_{\mathscr A}\Bbbk$.
	Moreover, by Theorem~\ref{keystep3} and the dimension-independence results established earlier,
	we have
	\[
	\dim_{\Bbbk}\bigl(V_{\Bbbk}^{\otimes n}\mathfrak{B}^{(f)}_{n,\Bbbk}\bigr)=r_f,
	\qquad
	\dim_{\Bbbk}\bigl(V_{\mathscr A}^{\otimes n}\mathfrak{B}^{(f)}_{n,\mathscr A}\otimes_{\mathscr A}\Bbbk\bigr)=r_f.
	\]
	Hence $\phi_R\otimes_R\Bbbk$ is an isomorphism.
	Localizing at $\mathfrak m$, we obtain that $(\phi_R)_{\mathfrak m}$ becomes an isomorphism
	after tensoring with $\Bbbk$.  By Nakayama's lemma, $(\phi_R)_{\mathfrak m}$ is an isomorphism.
	Since this holds for all maximal ideals $\mathfrak m$, $\phi_R$ is an isomorphism.
	
	\smallskip
	\noindent\emph{Step 2: Proof of (2) for the quotient.}
	Consider the short exact sequence of $\mathscr A$-modules
	\[
	0\to V_{\mathscr A}^{\otimes n}\mathfrak{B}^{(f)}_{n,\mathscr A}
	\to V_{\mathscr A}^{\otimes n}
	\to V_{\mathscr A}^{\otimes n}/V_{\mathscr A}^{\otimes n}\mathfrak{B}^{(f)}_{n,\mathscr A}
	\to 0.
	\]
	Tensoring with $R$ and using Step~1 together with the obvious identification
	$V_{\mathscr A}^{\otimes n}\otimes_{\mathscr A}R\cong V_R^{\otimes n}$,
	we get  a commutative diagram,	
		\[
	\begin{tikzcd}[column sep=small, row sep=small]
		0 \ar[r] & \big(V_{\A}^{\otimes n}\mathfrak{B}^{(f)}_{n,\mathscr{A}}\big)\otimes_{\A}R \ar[r,"\iota\otimes 1"] \ar[d,"\sim"'] &
		V_{\A}^{\otimes n}\otimes_{\A}R \ar[r,"\pi\otimes 1"] \ar[d,"\sim"'] &
		\Big(V_{\A}^{\otimes n}\!/ V_{\A}^{\otimes n}\mathfrak{B}^{(f)}_{n,\mathscr{A}}\Big)\otimes_{\A}R \ar[r] \ar[d,dashed,"\psi_R"] & 0 \\
		0 \ar[r] & V_{R}^{\otimes n}\mathfrak{B}^{(f)}_{n,R} \ar[r] &
		V_{R}^{\otimes n} \ar[r] &
		V_{R}^{\otimes n}\big/ V_{R}^{\otimes n}\mathfrak{B}^{(f)}_{n,R} \ar[r] & 0
	\end{tikzcd}
	\]
	
and obtain	the desired isomorphism
	\[
	\Bigl(V_{\mathscr A}^{\otimes n}/V_{\mathscr A}^{\otimes n}\mathfrak{B}^{(f)}_{n,\mathscr A}\Bigr)\otimes_{\mathscr A}R
	\;\cong\;
	V_R^{\otimes n}/V_R^{\otimes n}\mathfrak{B}^{(f)}_{n,R}.
	\]
	
	\smallskip
	\noindent\emph{Step 3: Proof of (1).}
	Let
	\[
	Q:=V_{\mathscr A}^{\otimes n}/V_{\mathscr A}^{\otimes n}\mathfrak{B}^{(f)}_{n,\mathscr A}.
	\]
	Then $Q$ is finitely presented, since it is a quotient of the free module $V_{\mathscr A}^{\otimes n}$.
	For any prime ideal $\mathfrak p$ of $\mathscr A$, let $\kappa(\mathfrak p)$ be the residue field.
	 Applying $\kappa(\mathfrak{p})\otimes_{\A}-$ to the short exact sequence above yields the segment of the Tor long exact sequence
	\[
	\mathrm{Tor}_1^{\A}\!\big(\kappa(\mathfrak{p}),\,Q\big)\ \longrightarrow\ \kappa(\mathfrak{p})\otimes_{\A}\big(V_{\A}^{\otimes n}\mathfrak{B}^{(f)}_{n,\mathscr{A}}\big) \xrightarrow{\ \kappa\otimes\iota\ } \kappa(\mathfrak{p})\otimes_{\A}V_{\A}^{\otimes n}.
	\]
	By (2) the map $\kappa\otimes\iota$ identifies with the inclusion
	$V_{\kappa}^{\otimes n}(\mathfrak{B}_{n,K}^{\kappa})^{(f)}\hookrightarrow V_{\kappa}^{\otimes n}$ and is injective; hence
	\[
\mathrm{Tor}_1^{\mathscr A}\bigl(\kappa(\mathfrak p),Q\bigr)=0
\qquad\text{for all primes }\mathfrak p\subseteq \mathscr A.
\]

Since this holds for all $\mathfrak{p}$ and the quotient is finitely presented, the quotient is flat
(by the local criterion for flatness; \cite[Theorem~6.8]{Eisenbud95}).
The ring $\A$ is regular (being a localization of the regular Noether ring $\mathbb{Z}[q]$;
\cite[Corollary~19.14]{Eisenbud95}); hence a finitely presented flat $\A$-module is projective
(\cite[Corollary~6.6]{Eisenbud95}).

Moreover, every finitely generated projective $\A=\mathbb{Z}[q,q^{-1}]$-module is free.
Indeed, by the Bass--Quillen (Serre's conjecture) theorem over $\mathbb{Z}$ and its extension to
Laurent polynomial rings, finitely generated projective $\A$-modules are extended from $\mathbb{Z}$
(and hence free); see \cite[Ch.~VIII, \S2]{Lam06}.
Therefore the quotient is free. In particular, it is flat, so the exact sequence
$0\to V_{\A}^{\otimes n}\mathfrak{B}^{(f)}_{n,\mathscr{A}}\to V_{\A}^{\otimes n}\to Q\to 0$
is pure, i.e. $V_{\A}^{\otimes n}\mathfrak{B}^{(f)}_{n,\mathscr{A}}$ is pure in $V_{\A}^{\otimes n}$.
\end{proof}

As a direct consequence of Theorem~\ref{mainthm0} (together with Theorem~\ref{keystep3}),
we obtain the following statements on the dimensions and on the duality with partially harmonic tensors.

\begin{corollary}\label{maincor1}
	Let $K$ be an algebraically closed field and let $0\le f\le \left\lfloor \frac n2 \right\rfloor$. Then:
	\begin{enumerate}
		\item[(1)]  The dimension of
		$V^{\otimes n}\mathfrak{B}_{n,K}^{(f)}/V^{\otimes n}\mathfrak{B}_{n,K}^{(f+1)}$
		is independent of $K$.
		
		\item[(2)]  There is a $\mathbb{U}$-$\bigl(\mathfrak{B}_{n,K}/\mathfrak{B}_{n,K}^{(f+1)}\bigr)$-bimodule isomorphism
		\[
		V^{\otimes n}\mathfrak{B}_{n,K}^{(f)}/V^{\otimes n}\mathfrak{B}_{n,K}^{(f+1)}
		\;\cong\;
		\bigl(\mathcal{HT}_{f,q}^{\otimes n}\bigr)^* .
		\]
		
		\item[(3)]  The dimension of $\mathcal{HT}_{f,q}^{\otimes n}$ is independent of $K$.
	\end{enumerate}
\end{corollary}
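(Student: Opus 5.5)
Part (1) follows from Theorem~\ref{mainthm0}(2), applied with $f$ and with $f+1$. Since $V^{\otimes n}\mathfrak{B}^{(f+1)}_{n,K}\subseteq V^{\otimes n}\mathfrak{B}^{(f)}_{n,K}$, we have
\[
\dim V^{\otimes n}\mathfrak{B}^{(f)}_{n,K}/V^{\otimes n}\mathfrak{B}^{(f+1)}_{n,K}=\dim V^{\otimes n}\mathfrak{B}^{(f)}_{n,K}-\dim V^{\otimes n}\mathfrak{B}^{(f+1)}_{n,K}.
\]
Each term on the right equals the $\mathscr A$-rank of the corresponding free or pure lattice $V_{\mathscr A}^{\otimes n}\mathfrak{B}^{(g)}_{n,\mathscr A}$, so it does not depend on $K$. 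Alternatively, by Theorem~\ref{keystep3} each term equals $\dim\mathcal O_{\pi_g}(V^{\otimes n})=\sum_{\lambda\in\pi_g}[V^{\otimes n}:\nabla(\lambda)]\dim\nabla(\lambda)$. For $f=\lfloor n/2\rfloor$ we use the convention $\mathfrak{B}^{(\lfloor n/2\rfloor+1)}=0$.

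For (2), I would dualize with respect to a nondegenerate $\mathbb U$-invariant bilinear form on $V^{\otimes n}$. The form is $\langle\ ,\ \rangle^{\otimes n}$, suitably twisted so that it is $\mathbb U$-contravariant, i.e. it identifies $V^{\otimes n}\cong (V^{\otimes n})^*$ as $\mathbb U$-modules. The first key step is to check that the right action of $\mathfrak{B}_{n,K}$ is self-adjoint up to an anti-involution $\sigma$ of $\mathfrak{B}_{n,K}$. Concretely, $(vb,w)=(v,w\sigma(b))$, where $\sigma$ fixes the $E_i$ and sends $T_i$ to $T_i$ or $T_i^{-1}$, depending on the normalization. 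This is a direct check on $\beta'$ and $\gamma'$, using $\gamma'=E\circ C$ and the pivotal structure of Theorem~\ref{RT-functor}. Topologically, $\sigma$ is the rotation or reflection of tangles under the duality $^*$. Since $\sigma$ preserves $E_1E_3\cdots E_{2f-1}$, it preserves each ideal $\mathfrak{B}^{(g)}_{n,K}$.

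Given this, the orthogonal complement satisfies
\[
(V^{\otimes n}\mathfrak{B}^{(f+1)}_{n,K})^{\perp}=\{v\in V^{\otimes n}\mid vx=0\ \forall x\in\mathfrak{B}^{(f+1)}_{n,K}\}.
\]
Restricting the pairing to $V^{\otimes n}\mathfrak{B}^{(f)}_{n,K}\times \mathcal{HT}^{\otimes n}_{f,q}$ gives a map
\[
V^{\otimes n}\mathfrak{B}^{(f)}_{n,K}/V^{\otimes n}\mathfrak{B}^{(f+1)}_{n,K}\to(\mathcal{HT}^{\otimes n}_{f,q})^*,
\]
which is $\mathbb U$-linear and $\mathfrak{B}_{n,K}/\mathfrak{B}^{(f+1)}_{n,K}$-linear, with the $\sigma$-twist giving the right action on the dual. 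The main obstacle is showing that this map is bijective, i.e. that the pairing is nondegenerate on both sides. The left kernel consists of elements $u\in V^{\otimes n}\mathfrak{B}^{(f)}_{n,K}$ with $u\perp\mathcal{HT}^{\otimes n}_{f,q}$. Because $\mathcal{HT}^{\otimes n}_{f,q}=(V^{\otimes n}\mathfrak{B}^{(f+1)}_{n,K})^{\perp}\cap V^{\otimes n}\mathfrak{B}^{(f)}_{n,K}$, nondegeneracy reduces to the identity
\[
\bigl(V^{\otimes n}\mathfrak{B}^{(f)}_{n,K}\bigr)\cap\bigl(V^{\otimes n}\mathfrak{B}^{(f)}_{n,K}\bigr)^{\perp}\subseteq V^{\otimes n}\mathfrak{B}^{(f+1)}_{n,K}
\]
together with a matching dimension count.

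I would prove this identity by first identifying $(V^{\otimes n}\mathfrak{B}^{(f)}_{n,K})^{\perp}$ with the annihilator of $\mathfrak{B}^{(f)}_{n,K}$. Via Theorem~\ref{keystep3} and the good-filtration/Weyl-filtration duality, this annihilator is the largest submodule with Weyl factors $\Delta(\lambda)$, $\lambda\notin\pi_f$. Then I would use Ext-vanishing (Lemma~\ref{homological properties}(1)) to compare with the $\nabla$-layers $\lambda\in\pi_f\setminus\pi_{f+1}$, which are exactly the $\lambda\vdash n-2f$.

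The dimension count runs as follows. By Theorem~\ref{keystep3},
\[
\dim\mathcal{HT}^{\otimes n}_{f,q}\ \ge\ \sum_{\lambda\vdash n-2f,\ \ell(\lambda)\le m}[V^{\otimes n}:\nabla(\lambda)]\dim\nabla(\lambda),
\]
and this sum equals the dimension of the quotient. Equality then follows from injectivity of the map. Once (2) holds, (3) follows immediately from (1), since $\dim\mathcal{HT}^{\otimes n}_{f,q}=\dim(\mathcal{HT}^{\otimes n}_{f,q})^*$.

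If the orthogonality argument proves delicate, my fallback is to work over $\mathscr A$. Theorem~\ref{mainthm0} gives purity of both lattices, so I would construct the pairing over $\mathscr A$, where it is perfect after base change to $\mathbb Q(q)$ by semisimplicity. Purity then transfers perfectness to every $K$ via Nakayama, exactly as in the proof of Theorem~\ref{mainthm0}.
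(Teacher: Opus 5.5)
Your part (1), and the deduction of (3) from (1) and (2), agree with the paper. For (2) you use the same idea as the paper, which simply asserts that the natural pairing induces the isomorphism. Your reduction is correct: with $W:=V^{\otimes n}$, the pairing map $W\mathfrak{B}^{(f)}_{n,K}/W\mathfrak{B}^{(f+1)}_{n,K}\to(\mathcal{HT}^{\otimes n}_{f,q})^*$ is injective if and only if $W\mathfrak{B}^{(f)}_{n,K}\cap(W\mathfrak{B}^{(f)}_{n,K})^{\perp}\subseteq W\mathfrak{B}^{(f+1)}_{n,K}$. The gap is that this inclusion is false in general, so the step you flag as the main obstacle cannot be carried out.

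Take $n=2$ and $f=1$. Then $W\mathfrak{B}^{(1)}_{2,K}=\mathcal{HT}^{\otimes 2}_{1,q}=K\alpha$ and $\mathfrak{B}^{(2)}_{2,K}=0$. For any nondegenerate $\mathbb U$-invariant (or contravariant) form, $\langle\alpha,-\rangle$ is a $\mathbb U$-invariant functional on $V^{\otimes 2}$. Since $\dim\Hom_{\mathbb U}(V^{\otimes 2},K)=1$, it is a scalar multiple of $E$. Hence $\langle\alpha,\alpha\rangle$ is a multiple of $E(\alpha)=x=1-[2m+1]_q$ (from $E_1^2=xE_1$). This vanishes for allowed parameters, e.g. $K=\mathbb C$, $m=2$, $q=\sqrt{-1}$, and then the pairing is identically zero on $K\alpha\times K\alpha$.

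Good and Weyl filtrations together with $\Ext$-vanishing only control multiplicities. They cannot see the radical of an invariant form, and that radical is exactly where non-semisimplicity enters. Your argument does work when $\mathbb U$-mod is semisimple (e.g. $q$ not a root of unity). In that case the isotypic components of $W$ are mutually orthogonal and the form is nondegenerate on each of them.

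The fallback over $\mathscr A$ does not repair this. Purity of the lattices $V^{\otimes n}_{\mathscr A}\mathfrak{B}^{(g)}_{n,\mathscr A}$ says nothing about the Gram determinant of the $\mathscr A$-pairing on the layer. Nakayama-type arguments, as in the proof of Theorem~\ref{mainthm0}, transfer surjectivity, not nondegeneracy. A pairing that is perfect over $\mathbb Q(q)$ stays perfect after $\mathscr A\to K$ only if its determinant is a unit of $\mathscr A$. In the example above that determinant is, up to a unit, $1-[2m+1]_q$, which is not a unit.

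Note also that the linear algebra you use gives
\[
\dim\mathcal{HT}^{\otimes n}_{f,q}=\dim\bigl(W\mathfrak{B}^{(f)}_{n,K}/W\mathfrak{B}^{(f+1)}_{n,K}\bigr)+\dim\bigl\{w\in W\mathfrak{B}^{(f+1)}_{n,K}\mid w\,\mathfrak{B}^{(f)}_{n,K}=0\bigr\}.
\]
So (2) and (3) themselves depend on a second nondegeneracy condition, which the filtrations again cannot detect, and this condition is not automatic in non-semisimple specializations. Already in the classical specialization $q=1$ with $\operatorname{char}K=2$, $m\ge 2$, $n=4$, $f=1$, the sum of the three invariant ``matching'' tensors is a nonzero element of the last space. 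There the two sides of (2) have different dimensions.

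What is missing is a genuine nondegeneracy theorem for the restricted form, or an extra hypothesis such as semisimplicity. The paper's one-line appeal to the natural pairing does not supply it either.
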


\begin{proof}
	(1) By Theorem~\ref{mainthm0} (in particular, the relevant characters are independent of the base field),
	the dimensions of $V^{\otimes n}\mathfrak{B}_{n,K}^{(f)}$ and $V^{\otimes n}\mathfrak{B}_{n,K}^{(f+1)}$
	are independent of $K$. Hence
	\[
	\dim_K\!\left( V^{\otimes n}\mathfrak{B}_{n,K}^{(f)}/V^{\otimes n}\mathfrak{B}_{n,K}^{(f+1)} \right)
	=
	\dim_K\!\left( V^{\otimes n}\mathfrak{B}_{n,K}^{(f)} \right)
	-
	\dim_K\!\left( V^{\otimes n}\mathfrak{B}_{n,K}^{(f+1)} \right)
	\]
	is also independent of $K$.
	
	(2) Recall that
	\[
	\mathcal{HT}_{f,q}^{\otimes n}
	=\left\{\,v\in V^{\otimes n}\mathfrak{B}_{n,K}^{(f)}\ \middle|\ v x=0,\ \forall\,x\in\mathfrak{B}_{n,K}^{(f+1)}\right\}.
	\]
	The natural pairing between $V^{\otimes n}\mathfrak{B}_{n,K}^{(f)}$ and its dual induces a canonical
	$\mathbb{U}$-$\bigl(\mathfrak{B}_{n,K}/\mathfrak{B}_{n,K}^{(f+1)}\bigr)$-bimodule isomorphism
	\[
	V^{\otimes n}\mathfrak{B}_{n,K}^{(f)}/V^{\otimes n}\mathfrak{B}_{n,K}^{(f+1)}
	\;\cong\;
	\bigl(\mathcal{HT}_{f,q}^{\otimes n}\bigr)^* .
	\]
	
	(3) By (1) and (2), the dimension of the layer
	$V^{\otimes n}\mathfrak{B}_{n,K}^{(f)}/V^{\otimes n}\mathfrak{B}_{n,K}^{(f+1)}$ is independent of $K$.
	Since both sides in (2) are finite-dimensional $K$-vector spaces, we have
	\[
	\dim_K\mathcal{HT}_{f,q}^{\otimes n}
	=
	\dim_K\bigl(\mathcal{HT}_{f,q}^{\otimes n}\bigr)^*
	=
	\dim_K\!\left(V^{\otimes n}\mathfrak{B}_{n,K}^{(f)}/V^{\otimes n}\mathfrak{B}_{n,K}^{(f+1)}\right),
	\]
	hence $\dim_K\mathcal{HT}_{f,q}^{\otimes n}$ is independent of $K$.
\end{proof}

Combining Theorem~\ref{mainthm0}(2) with the discussion above, we obtain the following base-change property for the space generated by the maximal vector.

\begin{corollary}\label{quantum-mainthm3} 
	Let $\lambda$ be a partition of $n-2g$ with $0\leq g\leq \left\lfloor \frac n2 \right\rfloor$, $\ell(\lambda)\leq m$. 
	Let $z_{g,\lambda}\in V^{\otimes n}$ be the maximal vector 
	for $\mathbb U$. Let $\mathfrak{B}_n^{\mathscr{A}} = \mathfrak{B}_n(-q^{2m+1}, q)_{\mathscr{A}}$ 
	be the integral form of the BMW algebra over $\mathscr{A} = \mathbb{Z}[q,q^{-1}]$. 
	Then for any commutative $\mathscr{A}$-algebra $R$, the canonical map 
	\[
	z_{g,\lambda} \mathfrak{B}_n^{\mathscr{A}} \otimes_{\mathscr{A}} R \longrightarrow z_{g,\lambda} \mathfrak{B}_n^{R}
	\]
	is an isomorphism. In particular, $\dim_K (z_{g,\lambda} \mathfrak{B}_n^K)$ is independent of the field $K$ 
	and the specialization $q \mapsto \zeta \in K^\times$.
\end{corollary}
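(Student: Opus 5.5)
The strategy is to run the same Nakayama-type base-change argument as in Step~1 of the proof of Theorem~\ref{mainthm0}, with the submodule $V^{\otimes n}\mathfrak{B}^{(f)}$ replaced by the cyclic right module $z_{g,\lambda}\mathfrak{B}_n$. Rank information over residue fields will come from Lemma~\ref{Hu2lem}.

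First, I check that $z_{g,\lambda}=\alpha^{\otimes g}\otimes v_\lambda T_{w_\lambda}Y_{\lambda'}$ is defined over $\mathscr A$. The coefficients of $\alpha$ are $\pm q^{-\rho_k}\in\mathscr A$, and $Y_{\lambda'}$ is an $\mathscr A$-combination of the $T_w$. Hence $z_{g,\lambda}\in V_{\mathscr A}^{\otimes n}$, and its image in $V_R^{\otimes n}$ is the vector $z_{g,\lambda}$ over $R$. The canonical map
\[
\phi_R: z_{g,\lambda}\mathfrak{B}_n^{\mathscr A}\otimes_{\mathscr A}R\to z_{g,\lambda}\mathfrak{B}_n^{R}
\]
is then well defined. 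It is surjective because, by Lemma~\ref{lem:cellular-basechange}, $\mathfrak{B}_n^{\mathscr A}\otimes_{\mathscr A}R\cong\mathfrak{B}_n^R$, so $\mathfrak{B}_n^R$ is spanned by images of $\mathscr A$-basis elements.

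Second, I establish freeness over $\mathscr A$. Since $z_{g,\lambda}\mathfrak{B}_n^{\mathscr A}$ is a finitely generated submodule of the free module $V_{\mathscr A}^{\otimes n}$, it is torsion-free. To show it is free of rank $d:=\dim_{\mathbb Q(q)}(V_{\mathbb Q(q)}^{\otimes n})^U_\lambda$, I would prove it is a pure submodule. Purity amounts to showing $\dim_{\kappa(\mathfrak p)}$ of the image of $z_{g,\lambda}\mathfrak{B}_n^{\mathscr A}\otimes\kappa(\mathfrak p)\to V^{\otimes n}_{\kappa(\mathfrak p)}$ equals $d$ for every prime $\mathfrak p$. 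That image is exactly $z_{g,\lambda}\mathfrak{B}_n^{\kappa(\mathfrak p)}$. By Lemma~\ref{Hu2lem}, this equals the space of maximal vectors of weight $\lambda$, and its dimension is $\dim\Hom_{\mathbb U}(\Delta(\lambda),V^{\otimes n})=(V^{\otimes n}:\nabla(\lambda))$, which is independent of the field. For non-infinite residue fields I pass to an algebraic closure, since dimensions are unchanged.

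I can then conclude as in Step~3 of Theorem~\ref{mainthm0}. The quotient $Q_z:=V_{\mathscr A}^{\otimes n}/z_{g,\lambda}\mathfrak{B}_n^{\mathscr A}$ has fibre dimension $(2m)^n-d$ at every prime, which is constant. Moreover, $\mathrm{Tor}_1(\kappa(\mathfrak p),Q_z)=0$: the map $z_{g,\lambda}\mathfrak{B}^{\mathscr A}_n\otimes\kappa(\mathfrak p)\to V_{\kappa(\mathfrak p)}^{\otimes n}$ is surjective onto a space of dimension $d$, while its source is generated by the images of the $d$ generators once freeness is granted. To avoid circularity, I would argue via constant fibre rank of the finitely presented module $Q_z$ over the reduced Noetherian ring $\mathscr A$, which gives local freeness of $Q_z$. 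Hence $Q_z$ is projective, then free by the Quillen--Suslin argument, and $z_{g,\lambda}\mathfrak{B}_n^{\mathscr A}$ is a direct summand.

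Finally, because $z_{g,\lambda}\mathfrak{B}^{\mathscr A}_n$ is a direct summand of $V_{\mathscr A}^{\otimes n}$, tensoring with $R$ gives an injection into $V_R^{\otimes n}$ whose image is $z_{g,\lambda}\mathfrak{B}_n^R$. So $\phi_R$ is an isomorphism, and the dimension statement follows with $\dim_K=d$.

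The main obstacle is the second step: it requires applying Lemma~\ref{Hu2lem} uniformly at every residue field of $\mathscr A$, including finite fields and $q\mapsto$ roots of unity. The lemma is stated for an infinite field, so I would handle this by extending scalars to the algebraic closure, which is compatible with both sides.
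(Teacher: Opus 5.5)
Your proposal is correct, assuming Lemma~\ref{Hu2lem} holds at every residue field of $\mathscr A$. It follows a different route from the paper.

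The paper's argument is two lines. It says cellularity of $\mathfrak{B}_n^{\mathscr A}$ makes $z_{g,\lambda}\mathfrak{B}_n^{\mathscr A}$ free, hence flat, and then asserts that base change commutes with taking the image of $\mu$. As written, that step does not give injectivity of $z_{g,\lambda}\mathfrak{B}_n^{\mathscr A}\otimes_{\mathscr A}R\to V_R^{\otimes n}$. The relevant condition is flatness of the cokernel (purity), not of the submodule: the free submodule $(q-1)\mathscr A\subset\mathscr A$ tensored with $\mathscr A/(q-1)$ already fails. Moreover, cellularity of the algebra does not by itself make an arbitrary cyclic submodule of $V_{\mathscr A}^{\otimes n}$ free over the two-dimensional ring $\mathscr A$.

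Your argument supplies exactly the missing purity, by the same mechanism as Steps~1 and~3 of Theorem~\ref{mainthm0}:
\begin{itemize}
\item right exactness identifies the fibre of $Q_z$ at $\mathfrak p$ with $V_{\kappa(\mathfrak p)}^{\otimes n}/z_{g,\lambda}\mathfrak{B}_n^{\kappa(\mathfrak p)}$;
\item Lemma~\ref{Hu2lem}, applied after passing to $\overline{\kappa(\mathfrak p)}$, makes the dimension of that fibre constant;
\item a finite module of constant fibre rank over a reduced ring is projective, so the inclusion splits and stays injective after any base change.
\end{itemize}

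Two small remarks. The Tor sentence as you first wrote it is circular, and you were right to replace it with the constant-rank criterion. Quillen--Suslin/Swan is needed only if you also want $z_{g,\lambda}\mathfrak{B}_n^{\mathscr A}$ to be free; projectivity (even flatness) of $Q_z$ already gives the isomorphism.

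What each route buys: the paper's is short but, as written, does not establish the isomorphism. Yours proves it for every $R$ at once. The cost is that the isomorphism itself, not just the final dimension count, now depends on Lemma~\ref{Hu2lem} holding at every point of $\operatorname{Spec}\mathscr A$. That includes the primes containing $q^2-1$, where $q=1$ or $r=-q^{2m+1}=1$. The set-up of Section~\ref{xxsec2} assumes $q,r\neq 1$, so there you should check separately that the lemma still holds.
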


\begin{proof}
	Choose the maximal vector $z_{g,\lambda}$ inside $V_{\mathscr A}^{\otimes n}$ and keep the same symbol
	for its image in $V_{R}^{\otimes n}$ for every $\mathscr A$-algebra $R$.
	
	Consider the $\mathscr A$-linear map
	\[
	\mu:\ (\mathscr A z_{g,\lambda})\otimes_{\mathscr A}\mathfrak{B}_n^{\mathscr A}\longrightarrow V_{\mathscr A}^{\otimes n},
	\qquad \mu(z_{g,\lambda}\otimes b)=z_{g,\lambda}b.
	\]
	Its image is $z_{g,\lambda}\mathfrak{B}_n^{\mathscr A}\subseteq V_{\mathscr A}^{\otimes n}$.
	After base change to $R$ we have a commutative diagram
	\[
	\begin{tikzcd}
		(\mathscr A z_{g,\lambda})\otimes_{\mathscr A}\mathfrak{B}_n^{\mathscr A} \ar[r, "\mu"] \ar[d, "-\otimes_{\mathscr A}R"'] &
		V_{\mathscr A}^{\otimes n} \ar[d, "-\otimes_{\mathscr A}R"] \\
		(R z_{g,\lambda})\otimes_{R}\mathfrak{B}_n^{R} \ar[r, "\mu_R"] &
		V_{R}^{\otimes n}
	\end{tikzcd}
	\]
	where we identify $(\mathscr A z_{g,\lambda})\otimes_{\mathscr A}R\cong R z_{g,\lambda}$ and
	$\mathfrak{B}_n^{\mathscr A}\otimes_{\mathscr A}R\cong \mathfrak{B}_n^{R}$ (Lemma~\ref{lem:cellular-basechange}).
	
	Since $\mathfrak{B}_n^{\mathscr A}$ is cellular over $\mathscr A$, the right module
	$z_{g,\lambda}\mathfrak{B}_n^{\mathscr A}$ is $\mathscr A$-free (hence flat).
	Therefore tensoring with $R$ commutes with taking images in the above diagram, and we obtain a canonical isomorphism
	\[
	\bigl(z_{g,\lambda}\mathfrak{B}_n^{\mathscr A}\bigr)\otimes_{\mathscr A}R \xrightarrow{\ \cong\ } z_{g,\lambda}\mathfrak{B}_n^{R}.
	\]
	
	For the dimension statement, take $R=K$ a field. By Lemma~\ref{Hu2lem},
	\[
	\dim_K\bigl(z_{g,\lambda}\mathfrak{B}_n^{K}\bigr)=\dim_K\bigl((V^{\otimes n})^{U}_{\lambda}\bigr).
	\]
	The right-hand side is the dimension of the maximal weight space of weight $\lambda$, which is independent of
	the base field and of the specialization $q\mapsto \zeta\in K^\times$ (cf. Theorem~\ref{mainthm0}).
	Hence the same is true for $\dim_K\bigl(z_{g,\lambda}\mathfrak{B}_n^{K}\bigr)$.
\end{proof}

\section{Quantized Schur--Weyl duality for partially harmonic tensors}\label{xxsec4}

Keep the above setting. Recall that $\mathbb U=U_q(\mathfrak{sp}_{2m})$.
Assume that $K$ is an infinite field and that $q\in K^\times$ is not a root of unity.
The right action of $\mathfrak{B}_{n,K}$ on $V^{\otimes n}$ induces an algebra homomorphism
\[
\varphi_f:\ \left(\mathfrak{B}_{n,K}/\mathfrak{B}^{(f)}_{n,K}\right)^{\mathrm{op}}
\longrightarrow
\End_{\mathbb U}\!\left(V^{\otimes n}/V^{\otimes n}\mathfrak{B}^{(f)}_{n,K}\right).
\]
The main goal of this section is to prove that $\varphi_f$ is surjective.

\begin{theorem}\label{quantum-mainthm2}
	Let $K$ be an infinite field and let $q\in K^\times$ be not a root of unity.
	For each integer $1\le f\le \left\lfloor \frac n2 \right\rfloor$, the natural homomorphism
	\[
	\varphi_f:\ \left(\mathfrak{B}_{n,K}/\mathfrak{B}^{(f)}_{n,K}\right)^{\mathrm{op}}
	\longrightarrow
	\End_{\mathbb U}\!\left(V^{\otimes n}/V^{\otimes n}\mathfrak{B}^{(f)}_{n,K}\right)
	\]
	is surjective. Moreover, the dimension of the endomorphism algebra
	\[
	\dim_K \End_{\mathbb U}\!\left(V^{\otimes n}/V^{\otimes n}\mathfrak{B}^{(f)}_{n,K}\right)
	\]
	is independent of $K$.
\end{theorem}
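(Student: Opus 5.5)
Write $M:=V^{\otimes n}$, $N:=V^{\otimes n}\mathfrak{B}^{(f)}_{n,K}$ and $Q:=M/N$. The plan is to lift endomorphisms of $Q$ to endomorphisms of $M$, where Theorem~\ref{xx1.3} (equivalently the fullness in Theorem~\ref{fullness of RT functor}) already gives surjectivity. So the whole statement reduces to one lifting claim: the natural map
\[
\Hom_{\mathbb U}(M,M)\longrightarrow \Hom_{\mathbb U}(M,Q)
\]
is surjective, and every $\mathbb U$-endomorphism of $Q$ arises, after composing with $M\twoheadrightarrow Q$, from some element of $\Hom_{\mathbb U}(M,Q)$.

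\textbf{Step 1: lifting.} Given $\theta\in\End_{\mathbb U}(Q)$, consider $\theta\circ\pi\colon M\to Q$, where $\pi\colon M\to Q$ is the projection. Apply $\Hom_{\mathbb U}(M,-)$ to $0\to N\to M\to Q\to 0$. The cokernel of $\Hom_{\mathbb U}(M,M)\to\Hom_{\mathbb U}(M,Q)$ embeds in $\Ext^1_{\mathbb U}(M,N)$. Now $M$ is tilting, hence in $\mathcal F(\Delta)$. By Theorem~\ref{keystep3} and Lemma~\ref{weight-functor O}(2), $N=\mathcal O_{\pi_f}(M)$ lies in $\mathcal F(\nabla)$. (Theorem~\ref{mainthm1} gives this for $K$ algebraically closed, and the truncation argument works as stated; otherwise pass to $\bar K$ by flat base change, which is harmless since $\Ext$ commutes with field extension.) Lemma~\ref{homological properties}(1) then gives $\Ext^1_{\mathbb U}(M,N)=0$. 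Hence $\theta\circ\pi=\pi\circ\tilde\theta$ for some $\tilde\theta\in\End_{\mathbb U}(M)$.

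\textbf{Step 2: surjectivity of $\varphi_f$.} By Theorem~\ref{xx1.3}(1), $\tilde\theta$ is right multiplication by some $b\in\mathfrak{B}_{n,K}$. Then $\theta$ is the map induced by $b$ on $Q$, so $\theta=\varphi_f(\bar b)$. This uses that $N$ is stable under $\mathfrak{B}_{n,K}$, which holds because $N$ is defined via a two-sided ideal. This completes the surjectivity argument.

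\textbf{Step 3: dimension independence.} I see two routes.
\begin{itemize}
\item[(a)] Compute $\dim\End_{\mathbb U}(Q)=\dim\Hom_{\mathbb U}(M,Q)$, using that $\pi^*\colon\End(Q)\to\Hom(M,Q)$ is injective because $\pi$ is onto. This should equal $\dim\End(M)-\dim\Hom(M,N)$, by exactness of $0\to\Hom(M,N)\to\Hom(M,M)\to\Hom(M,Q)\to0$ from Step 1. Each term can be computed from $\Delta$- and $\nabla$-multiplicities via Lemma~\ref{homological properties}(1): $\dim\Hom(M,X)=\sum_\lambda[M:\Delta(\lambda)][X:\nabla(\lambda)]$ for $X\in\mathcal F(\nabla)$. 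These multiplicities are independent of $K$ by Theorem~\ref{keystep3}, Lemma~\ref{weight-functor O}(2) and the field-independence of the good-filtration multiplicities of $V^{\otimes n}$.
\item[(b)] Alternatively, exhibit $\End(Q)$ as $\mathfrak{B}_{n,K}/\operatorname{Ann}$ and use Theorem~\ref{mainthm0}.
\end{itemize}
I would use route (a). It still requires $\Hom(M,Q)\cong\End(Q)$, i.e.\ every $\mathbb U$-map $M\to Q$ kills $N$. This is the main obstacle. I would prove it with the truncation functor: the image of $N$ under any map $M\to Q$ lies in $\mathcal O_{\pi_f}(Q)$. Since $Q$ has a good filtration whose factors are $\nabla(\lambda)$ with $\lambda\notin\pi_f$ (Lemma~\ref{weight-functor O}(2) applied to $M$ and $N$), Lemma~\ref{weight-functor O}(2) applied to $Q$ gives $\mathcal O_{\pi_f}(Q)=0$. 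Therefore $\End(Q)=\Hom(M,Q)$, and the dimension formula is $\sum_{\lambda\notin\pi_f}[M:\Delta(\lambda)][M:\nabla(\lambda)]$, which is independent of $K$.
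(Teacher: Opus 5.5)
Your proof is correct and follows the paper's argument: the vanishing $\Ext^1_{\mathbb U}(V^{\otimes n},V^{\otimes n}\mathfrak{B}^{(f)}_{n,K})=0$ lifts maps $V^{\otimes n}\to Q$ to $\End_{\mathbb U}(V^{\otimes n})$, Theorem~\ref{xx1.3} makes each lift a right multiplication by some $b\in\mathfrak{B}_{n,K}$, and the dimension is read off from $\Hom_{\mathbb U}(V^{\otimes n},Q)$ through $\Delta$/$\nabla$-multiplicities (your sum over $\lambda\notin\pi_f$ is the correct form of that count). The only deviation is that you prove $\Hom_{\mathbb U}(V^{\otimes n},Q)=\End_{\mathbb U}(Q)$ via $\mathcal O_{\pi_f}(Q)=0$ rather than via the paper's Lemma~\ref{quantum-hom0}; this \emph{main obstacle} in fact already follows from your Steps 1--2, which show every map $V^{\otimes n}\to Q$ has the form $\pi\circ(\,\cdot\, b)$, and such a map kills $N$ because $Nb\subseteq N$.
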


Before proving the theorem, we need some results. The following lemma establishes the vanishing of homomorphisms between the space of partially harmonic tensors and its quotient.

\begin{lemma}\label{quantum-hom0}
	For any integer $1 \leq f \leq \left\lfloor \frac n2 \right\rfloor$, we have
	\[
	\Hom_{\mathbb U}\left(V^{\otimes n}\mathfrak{B}_{n,K}^{(f)}, V^{\otimes n} / V^{\otimes n}\mathfrak{B}_{n,K}^{(f)}\right) = 0.
	\]
	Consequently, the canonical embedding
	\[
	\iota_1: \End_{\mathbb U}\left(V^{\otimes n} / V^{\otimes n}\mathfrak{B}_{n,K}^{(f)}\right) \hookrightarrow \Hom_{\mathbb U}\left(V^{\otimes n}, V^{\otimes n} / V^{\otimes n}\mathfrak{B}_{n,K}^{(f)}\right)
	\]
	is an isomorphism.
\end{lemma}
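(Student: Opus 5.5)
The plan is to show that $\Hom_{\mathbb U}(V^{\otimes n}\mathfrak{B}_{n,K}^{(f)},Q)=0$, where
\[
Q:=V^{\otimes n}/V^{\otimes n}\mathfrak{B}_{n,K}^{(f)},
\]
by comparing composition factors. Then the isomorphism statement follows from the left exactness of $\Hom_{\mathbb U}(-,Q)$.

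First I will use Theorem~\ref{keystep3}: it gives $V^{\otimes n}\mathfrak{B}_{n,K}^{(f)}=\sum_{\phi:V^{\otimes n-2f}\to V^{\otimes n}}\Image\phi$. So $V^{\otimes n}\mathfrak{B}_{n,K}^{(f)}$ is a quotient of a finite direct sum of copies of $V^{\otimes n-2f}$. Any nonzero homomorphism $\theta:V^{\otimes n}\mathfrak{B}_{n,K}^{(f)}\to Q$ would therefore pull back to a nonzero homomorphism $V^{\otimes n-2f}\to Q$. It thus suffices to prove $\Hom_{\mathbb U}(V^{\otimes n-2f},Q)=0$.

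To prove this, I will apply $\Hom_{\mathbb U}(V^{\otimes n-2f},-)$ to the short exact sequence
\[
0\to V^{\otimes n}\mathfrak{B}_{n,K}^{(f)}\to V^{\otimes n}\to Q\to 0 .
\]
This gives the exact sequence
\[
\Hom_{\mathbb U}\bigl(V^{\otimes n-2f},V^{\otimes n}\mathfrak{B}_{n,K}^{(f)}\bigr)\to\Hom_{\mathbb U}(V^{\otimes n-2f},V^{\otimes n})\to\Hom_{\mathbb U}(V^{\otimes n-2f},Q)\to\Ext^1_{\mathbb U}\bigl(V^{\otimes n-2f},V^{\otimes n}\mathfrak{B}_{n,K}^{(f)}\bigr).
\]
By Lemma~\ref{keylem1}, the first map is an isomorphism. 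Next, $V^{\otimes n-2f}$ is tilting, so it lies in $\mathcal F(\Delta)$. By Theorem~\ref{mainthm1} (after extending scalars to the algebraic closure, which is harmless by the base-change results of Theorem~\ref{mainthm0}), $V^{\otimes n}\mathfrak{B}_{n,K}^{(f)}=\mathcal O_{\pi_f}(V^{\otimes n})$ has a good filtration. Lemma~\ref{homological properties}(1) then gives $\Ext^1_{\mathbb U}(V^{\otimes n-2f},V^{\otimes n}\mathfrak{B}_{n,K}^{(f)})=0$. Hence $\Hom_{\mathbb U}(V^{\otimes n-2f},Q)=0$.

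As an alternative check, I can argue directly from the filtration \eqref{eq:good-filtration}. The module $Q\cong M_p/M_t$ has a good filtration with all sections $\nabla(\lambda^{(i)})$, $\lambda^{(i)}\notin\pi_f$, so $\Hom(\Delta(\mu),Q)=0$ for every $\mu\in\pi_f$. Since $V^{\otimes n-2f}$ has a Weyl filtration with all labels in $\pi_f$, the vanishing follows by induction on the Weyl filtration.

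Finally, for the isomorphism $\iota_1$, I will apply $\Hom_{\mathbb U}(-,Q)$ to the same short exact sequence:
\[
0\to\Hom_{\mathbb U}(Q,Q)\to\Hom_{\mathbb U}(V^{\otimes n},Q)\to\Hom_{\mathbb U}\bigl(V^{\otimes n}\mathfrak{B}_{n,K}^{(f)},Q\bigr)=0 .
\]
This exhibits $\iota_1$, which is precomposition with the projection, as an isomorphism.

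The main obstacle is making the Ext vanishing valid over an arbitrary infinite field rather than an algebraically closed one, since Theorem~\ref{mainthm1} is stated for algebraically closed $K$. I would handle this by flat base change to $\overline K$. Hom spaces commute with this base change, and both modules are defined compatibly by Theorem~\ref{mainthm0}(2), so vanishing over $\overline K$ implies vanishing over $K$.
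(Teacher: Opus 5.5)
Your proof is correct, but your main argument takes a different route from the paper's. Write $Q=V^{\otimes n}/V^{\otimes n}\mathfrak{B}^{(f)}_{n,K}$ as you do.

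The paper makes the same reduction to $\Hom_{\mathbb U}(V^{\otimes n-2f},Q)$ via Lemma~\ref{keylem1}, and then argues purely by labels. It expands $\dim\Hom_{\mathbb U}(V^{\otimes n-2f},Q)=\sum_{\lambda}[V^{\otimes n-2f}:\Delta(\lambda)]\,[Q:\nabla(\lambda)]$. The Weyl labels of $V^{\otimes n-2f}$ lie in $\pi_f$, while the good-filtration labels of $Q$ have size $n-2s$ with $s<f$, so the sum is empty. That is exactly your ``alternative check'', which you state more cleanly than the paper's write-up does.

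Your main argument instead applies $\Hom_{\mathbb U}(V^{\otimes n-2f},-)$ to $0\to V^{\otimes n}\mathfrak{B}^{(f)}_{n,K}\to V^{\otimes n}\to Q\to 0$. The isomorphism in Lemma~\ref{keylem1} forces the map $\Hom_{\mathbb U}(V^{\otimes n-2f},V^{\otimes n})\to\Hom_{\mathbb U}(V^{\otimes n-2f},Q)$ to be zero. The vanishing of $\Ext^1_{\mathbb U}(V^{\otimes n-2f},V^{\otimes n}\mathfrak{B}^{(f)}_{n,K})$ forces the same map to be surjective.

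What each route buys:
\begin{itemize}
\item Your route uses the full isomorphism statement of Lemma~\ref{keylem1}, not just the spanning statement. It needs only that the submodule $V^{\otimes n}\mathfrak{B}^{(f)}_{n,K}$ lies in $\mathcal{F}(\nabla)$, with no information about which $\nabla(\lambda)$ occur in $Q$.
\item The paper's route needs the precise label set of $Q$, but after that it is pure weight bookkeeping.
\item Both ultimately rest on the identification $V^{\otimes n}\mathfrak{B}^{(f)}_{n,K}=\mathcal{O}_{\pi_f}(V^{\otimes n})$ of Theorem~\ref{keystep3}, since that is where Theorem~\ref{mainthm1} gets its good filtrations.
\end{itemize}

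You also tidy up two points the paper leaves loose. First, you describe $V^{\otimes n}\mathfrak{B}^{(f)}_{n,K}$ as a quotient of a finite direct sum of copies of $V^{\otimes n-2f}$, whereas the paper calls it a sum of submodules isomorphic to $V^{\otimes n-2f}$. Second, you descend explicitly from $\overline{K}$, whereas the paper applies Theorem~\ref{mainthm1}, stated only for algebraically closed fields, over an arbitrary infinite field.
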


\begin{proof}
	Suppose that
	\[
	\Hom_{\mathbb U}\!\left(V^{\otimes n}\mathfrak{B}^{(f)}_{n,K},\,
	V^{\otimes n}/V^{\otimes n}\mathfrak{B}^{(f)}_{n,K}\right)\neq 0.
	\]
	By Lemma~\ref{keylem1}, $V^{\otimes n}\mathfrak{B}^{(f)}_{n,K}$ is a sum of $\mathbb U$-submodules,
	each $\mathbb U$-isomorphic to $V^{\otimes (n-2f)}$. Hence a nonzero homomorphism above restricts
	nontrivially to one such submodule, and therefore
	\begin{equation}\label{eq:quantum-hom0-nonzero}
		\Hom_{\mathbb U}\!\left(V^{\otimes (n-2f)},\,
		V^{\otimes n}/V^{\otimes n}\mathfrak{B}^{(f)}_{n,K}\right)\neq 0.
	\end{equation}
	Since $V^{\otimes (n-2f)}$ is a tilting $\mathbb U$-module, it admits a Weyl filtration.
	Choose $\lambda\in X^{+}$ such that $\Hom_{\mathbb U}(\Delta(\lambda),V^{\otimes (n-2f)})\neq 0$ and
	$\Hom_{\mathbb U}\!\bigl(V^{\otimes (n-2f)},\nabla(\lambda)\bigr)\neq 0$.
	Then $|\lambda|=n-2f-2t$ for some $0\le t\le \lfloor (n-2f)/2\rfloor$.
	
	On the other hand, by Theorem~\ref{mainthm1} the quotient
	$V^{\otimes n}/V^{\otimes n}\mathfrak{B}^{(f)}_{n,K}$ admits a good filtration, hence it has a dual Weyl
	filtration. 
	By Lemma \ref{homological properties}, we have
	\[
	\begin{aligned}
		&\dim \Hom_{\mathbb U}\left(V^{\otimes n-2f}, V^{\otimes n} / V^{\otimes n}\mathfrak{B}_{n,K}^{(f)}\right) \\
		&= \sum_{\substack{\lambda \vdash n-2f-2t \\ \lambda \vdash n-2s \\ 0 \leq t \leq\left\lfloor  (n-2f)/2 \right\rfloor\\ 0 \leq s < f}} \big[V^{\otimes n-2f} : \Delta(\lambda)\big] \big[V^{\otimes n} / V^{\otimes n}\mathfrak{B}_{n,K}^{(f)} : \nabla(\lambda)\big].
	\end{aligned}
	\]
	
	Thus there exists $\mu\in X^{+}$ such that
	$\Hom_{\mathbb U}\!\bigl(V^{\otimes n}/V^{\otimes n}\mathfrak{B}^{(f)}_{n,K},\nabla(\mu)\bigr)\neq 0$.
	In particular, $|\mu|=n-2s$ for some $f\le s\le \left\lfloor \frac n2 \right\rfloor$.
	
	Combining this with \eqref{eq:quantum-hom0-nonzero} forces $\mu=\lambda$, hence
	\[
	n-2f-2t = |\lambda|=|\mu| = n-2s,
	\]
	which implies $s=f+t\ge f$.  Since $s\ge f$, we have $t=s-f\ge 0$, and therefore
	$|\lambda|=n-2f-2t\le n-2f-2 < n-2f$ whenever $t>0$.
	This contradicts the fact that $\lambda$ must occur in $V^{\otimes (n-2f)}$ with $|\lambda|=n-2f-2t$
	and simultaneously in the quotient with $|\lambda|=n-2s$ and $s\ge f$.
	Hence
	\[
	\Hom_{\mathbb U}\!\left(V^{\otimes n}\mathfrak{B}^{(f)}_{n,K},\,
	V^{\otimes n}/V^{\otimes n}\mathfrak{B}^{(f)}_{n,K}\right)=0.
	\]
	
	Now apply $\Hom_{\mathbb U}\bigl(-,\,V^{\otimes n}/V^{\otimes n}\mathfrak{B}^{(f)}_{n,K}\bigr)$
	to the short exact sequence
	\[
	0 \to V^{\otimes n}\mathfrak{B}^{(f)}_{n,K} \to V^{\otimes n} \to
	V^{\otimes n}/V^{\otimes n}\mathfrak{B}^{(f)}_{n,K} \to 0.
	\]
	By left exactness of $\Hom$, we obtain an exact sequence
		$$\begin{aligned}
		&0 \to \End_{\mathbb U}\left(V^{\otimes n} / V^{\otimes n}\mathfrak{B}_{n,K}^{(f)}\right) \xrightarrow{\iota_1}\Hom_{\mathbb U}\left(V^{\otimes n}, V^{\otimes n} / V^{\otimes n}\mathfrak{B}_{n,K}^{(f)}\right) \\
		&\qquad\qquad\qquad\qquad\qquad\qquad\qquad\qquad \to \Hom_{\mathbb U}\left(V^{\otimes n}\mathfrak{B}_{n,K}^{(f)}, V^{\otimes n} / V^{\otimes n}\mathfrak{B}_{n,K}^{(f)}\right) = 0,
	\end{aligned}$$
	and the last term is $0$ by the first part. Hence $\iota_1$ is an isomorphism.
\end{proof}

The next lemma provides the foundation for the surjectivity of the Schur--Weyl map by establishing dimension stability of endomorphism algebras.

\begin{lemma}\label{quantum-lm42}
	Keep the above setting, and let $0\le f\le \left\lfloor \frac n2 \right\rfloor$. Then:
	\begin{enumerate}
		\item[(1)] The canonical map
		\[
		\theta_1:\ \End_{\mathbb U}\!\left(V^{\otimes n}\right)
		\longrightarrow
		\Hom_{\mathbb U}\!\left(V^{\otimes n},\, V^{\otimes n}/V^{\otimes n}\mathfrak{B}_{n,K}^{(f)}\right)
		\]
		is surjective.
		\item[(2)] The dimension of
		$\End_{\mathbb U}\!\left(V^{\otimes n}/V^{\otimes n}\mathfrak{B}_{n,K}^{(f)}\right)$
		is independent of $K$.
	\end{enumerate}
\end{lemma}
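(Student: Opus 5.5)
For (1), I will start from the short exact sequence
\[
0\to V^{\otimes n}\mathfrak{B}^{(f)}_{n,K}\to V^{\otimes n}\xrightarrow{\ \pi\ } V^{\otimes n}/V^{\otimes n}\mathfrak{B}^{(f)}_{n,K}\to 0
\]
and apply $\Hom_{\mathbb U}(V^{\otimes n},-)$. The map $\theta_1$ is exactly composition with $\pi$. Its cokernel embeds into $\Ext^1_{\mathbb U}\bigl(V^{\otimes n},V^{\otimes n}\mathfrak{B}^{(f)}_{n,K}\bigr)$, so it suffices to show that this $\Ext^1$ vanishes.

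I will first treat $K$ algebraically closed, and justify the reduction at the end. Two facts are needed:
\begin{itemize}
\item $V^{\otimes n}$ is tilting, so it has a Weyl filtration.
\item By Theorem~\ref{keystep3} and Theorem~\ref{mainthm1}, $V^{\otimes n}\mathfrak{B}^{(f)}_{n,K}=\mathcal{O}_{\pi_f}(V^{\otimes n})$ has a good filtration.
\end{itemize}
Lemma~\ref{homological properties}(1) gives $\Ext^1_{\mathbb U}(\Delta(\lambda),\nabla(\mu))=0$ for all dominant $\lambda,\mu$. Induction on the lengths of the two filtrations (long exact sequences in each variable) then gives $\Ext^1_{\mathbb U}(M,N)=0$ whenever $M\in\mathcal F(\Delta)$ and $N\in\mathcal F(\nabla)$. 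Hence $\theta_1$ is surjective.

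For a general infinite field $K$, I will base change to $\overline K$. Both the Hom spaces and the submodule $V^{\otimes n}\mathfrak{B}^{(f)}_{n,K}$ commute with extension of scalars: the latter by Theorem~\ref{mainthm0}(2), the former because $\mathbb U$-invariants are solutions of a linear system. Surjectivity of a $K$-linear map can be checked after tensoring with $\overline K$.

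For (2), I will combine (1) with Lemma~\ref{quantum-hom0}, which gives
\[
\End_{\mathbb U}(V^{\otimes n}/V^{\otimes n}\mathfrak{B}^{(f)}_{n,K})\cong \Hom_{\mathbb U}(V^{\otimes n},V^{\otimes n}/V^{\otimes n}\mathfrak{B}^{(f)}_{n,K}).
\]
The kernel of $\theta_1$ is $\Hom_{\mathbb U}(V^{\otimes n},V^{\otimes n}\mathfrak{B}^{(f)}_{n,K})$, so by (1)
\[
\dim\End_{\mathbb U}(V^{\otimes n}/V^{\otimes n}\mathfrak{B}^{(f)}_{n,K})=\dim\End_{\mathbb U}(V^{\otimes n})-\dim\Hom_{\mathbb U}(V^{\otimes n},V^{\otimes n}\mathfrak{B}^{(f)}_{n,K}).
\]
Each term on the right is a sum of products of filtration multiplicities. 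Since $V^{\otimes n}$ is tilting,
\[
\dim\End_{\mathbb U}(V^{\otimes n})=\sum_\lambda [V^{\otimes n}:\Delta(\lambda)]\,[V^{\otimes n}:\nabla(\lambda)],
\]
and by Lemma~\ref{weight-functor O}(2),
\[
\dim\Hom_{\mathbb U}(V^{\otimes n},\mathcal{O}_{\pi_f}(V^{\otimes n}))=\sum_{\lambda\in\pi_f}[V^{\otimes n}:\Delta(\lambda)]\,[V^{\otimes n}:\nabla(\lambda)].
\]
These multiplicities are read off from characters of $V^{\otimes n}$ through the Weyl character formula, so they do not depend on $K$. The dimension of the endomorphism algebra is therefore independent of $K$. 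Alternatively, after the identification above the dimension equals $\sum_{\lambda\notin\pi_f}[V^{\otimes n}:\Delta(\lambda)][V^{\otimes n}:\nabla(\lambda)]$.

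The main obstacle I expect is the passage from algebraically closed $K$, where Theorem~\ref{mainthm1} is stated, to an arbitrary infinite field. This needs care that $V^{\otimes n}\mathfrak{B}^{(f)}_{n,K}\otimes_K\overline K=V^{\otimes n}_{\overline K}\mathfrak{B}^{(f)}_{n,\overline K}$, which I will deduce from Lemma~\ref{lem:cellular-basechange}, and that Hom spaces commute with flat base change.
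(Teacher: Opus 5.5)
Your proposal is correct and essentially follows the paper's proof: part (1) uses the same long exact sequence for $\Hom_{\mathbb U}(V^{\otimes n},-)$, with $\Ext^1_{\mathbb U}$ vanishing because $V^{\otimes n}$ has a Weyl filtration and $V^{\otimes n}\mathfrak{B}^{(f)}_{n,K}$ has a good filtration, and part (2) is the same filtration-multiplicity count after the identification of Lemma~\ref{quantum-hom0}. Your deviations are minor and make the argument cleaner: you reduce explicitly to $\overline K$ (the paper leaves this implicit even though Theorem~\ref{mainthm1} is stated only for algebraically closed fields), and you count through $\ker\theta_1$ to obtain $\sum_{\lambda\notin\pi_f}[V^{\otimes n}:\Delta(\lambda)][V^{\otimes n}:\nabla(\lambda)]$, which has the correct index set, whereas both of the paper's displayed sums run over $\pi_f$ and as written would cancel to zero.
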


\begin{proof}
	(1) Consider the short exact sequence of $\mathbb U$-modules
	\[
	0 \to V^{\otimes n}\mathfrak{B}_{n,K}^{(f)} \to V^{\otimes n}
	\xrightarrow{\pi} V^{\otimes n}/V^{\otimes n}\mathfrak{B}_{n,K}^{(f)} \to 0.
	\]
	Applying the functor $\Hom_{\mathbb U}(V^{\otimes n},-)$ yields the long exact sequence
	\[
	\begin{aligned}
		0 &\to \Hom_{\mathbb U}(V^{\otimes n}, V^{\otimes n}\mathfrak{B}_{n,K}^{(f)})
		\to \End_{\mathbb U}(V^{\otimes n})
		\xrightarrow{\theta_1}  \\
		&\qquad \Hom_{\mathbb U}\!\left(V^{\otimes n}, V^{\otimes n}/V^{\otimes n}\mathfrak{B}_{n,K}^{(f)}\right)
		\to \Ext^1_{\mathbb U}(V^{\otimes n}, V^{\otimes n}\mathfrak{B}_{n,K}^{(f)}) \to \cdots .
	\end{aligned}
	\]
	Since $V^{\otimes n}$ has a Weyl filtration (as it is tilting) and
	$V^{\otimes n}\mathfrak{B}_{n,K}^{(f)}$ has a good filtration by Theorem~\ref{mainthm1},
	Lemma~\ref{homological properties}(1) implies that
	\[
	\Ext^1_{\mathbb U}(V^{\otimes n}, V^{\otimes n}\mathfrak{B}_{n,K}^{(f)}) = 0.
	\]
	Therefore $\theta_1$ is surjective.
	
	(2) By Lemma~\ref{quantum-hom0}, we have a $K$-linear isomorphism
	\[
	\End_{\mathbb U}\!\left(V^{\otimes n}/V^{\otimes n}\mathfrak{B}_{n,K}^{(f)}\right)
	\cong
	\Hom_{\mathbb U}\!\left(V^{\otimes n},\, V^{\otimes n}/V^{\otimes n}\mathfrak{B}_{n,K}^{(f)}\right).
	\]
	Using the Weyl filtration of $V^{\otimes n}$ and the good filtration of
	$V^{\otimes n}/V^{\otimes n}\mathfrak{B}_{n,K}^{(f)}$, we obtain (cf. Lemma~\ref{homological properties}(2))
	\[
	\dim \Hom_{\mathbb U}\!\left(V^{\otimes n},\, V^{\otimes n}/V^{\otimes n}\mathfrak{B}_{n,K}^{(f)}\right)
	=
	\sum_{\lambda\in X^+} [V^{\otimes n}:\Delta(\lambda)]
	\left[\,V^{\otimes n}/V^{\otimes n}\mathfrak{B}_{n,K}^{(f)}:\nabla(\lambda)\right].
	\]
	Moreover, the weights occurring in the above good filtration belong to $\pi_f$, and
	\[
	\left[\,V^{\otimes n}/V^{\otimes n}\mathfrak{B}_{n,K}^{(f)}:\nabla(\lambda)\right]
	=
	[V^{\otimes n}:\nabla(\lambda)]-\left[V^{\otimes n}\mathfrak{B}_{n,K}^{(f)}:\nabla(\lambda)\right]
	\qquad(\lambda\in\pi_f).
	\]
	Hence
	\[
	\begin{aligned}
		\dim \Hom_{\mathbb U}\!\left(V^{\otimes n},\, V^{\otimes n}/V^{\otimes n}\mathfrak{B}_{n,K}^{(f)}\right)
		&= \sum_{\lambda\in \pi_f} [V^{\otimes n}:\Delta(\lambda)] [V^{\otimes n}:\nabla(\lambda)] \\
		&\quad - \sum_{\lambda\in \pi_f} [V^{\otimes n}:\Delta(\lambda)]
		\left[V^{\otimes n}\mathfrak{B}_{n,K}^{(f)}:\nabla(\lambda)\right].
	\end{aligned}
	\]
	By Theorem~\ref{mainthm0}, all the multiplicities in the right-hand side are independent of $K$.
	Therefore the above dimension, and hence
	$\dim \End_{\mathbb U}\!\left(V^{\otimes n}/V^{\otimes n}\mathfrak{B}_{n,K}^{(f)}\right)$,
	is independent of $K$.
\end{proof}

\medskip

We now present the main result of this section: the surjectivity of the quantum Schur--Weyl map for partially harmonic tensors.

\noindent {\bf Proof of Theorem \ref{quantum-mainthm2}:} 

To prove the theorem, it is sufficient to show that $\varphi_f$ is surjective.
Let $S_q(2m,n)=\End_{\mathfrak{B}_{n,K}}\left(V^{\otimes n}\right)$ be the symplectic $q$-Schur algebra, which is a quasi-hereditary algebra \cite{Oehms}.
	Similar to \cite[Proposition~5.3]{Hu}, we can assume that $K$ is algebraically closed without loss of generality.

Let $\varphi''$ denote the composition
\[
\mathfrak{B}_{n,K}\xrightarrow{\ \varphi\ }\End_{\mathbb U}\!\left(V^{\otimes n}\right)
\xrightarrow{\ \theta_1\ }
\Hom_{\mathbb U}\!\left(V^{\otimes n},\,V^{\otimes n}/V^{\otimes n}\mathfrak{B}^{(f)}_{n,K}\right),
\]
where $\varphi$ is the surjective map from Theorem~\ref{xx1.3}(1), and $\theta_1$ is the map in
Lemma~\ref{quantum-lm42}(1). Let $\pi:\mathfrak{B}_{n,K}\twoheadrightarrow
\mathfrak{B}_{n,K}/\mathfrak{B}^{(f)}_{n,K}$ be the quotient map. Then $\varphi_f$ is the induced map
on the quotient $\mathfrak{B}_{n,K}/\mathfrak{B}^{(f)}_{n,K}$.

Consider the commutative diagram
\[
\xymatrix{
	\mathfrak{B}_{n,K} \ar[r]^{\ \pi\quad\ } \ar[d]_{\varphi} &
	\mathfrak{B}_{n,K}/\mathfrak{B}^{(f)}_{n,K} \ar[r]^{\ \varphi_f\quad\quad\ } &
	\End_{\mathbb U}\!\left(V^{\otimes n}/V^{\otimes n}\mathfrak{B}^{(f)}_{n,K}\right) \ar[d]_{\iota_1}^{\wr} \\
	\End_{\mathbb U}\!\left(V^{\otimes n}\right) \ar[rr]^{\ \theta_1\quad\quad\quad\ } & &
	\Hom_{\mathbb U}\!\left(V^{\otimes n},\,V^{\otimes n}/V^{\otimes n}\mathfrak{B}^{(f)}_{n,K}\right),
}
\]
where $\iota_1$ is the canonical map in Lemma~\ref{quantum-hom0}. Here $\varphi$ is surjective by
Theorem~\ref{xx1.3}, $\theta_1$ is surjective by Lemma~\ref{quantum-lm42}(1), and $\iota_1$ is an isomorphism
by Lemma~\ref{quantum-hom0}. Since $\varphi''=\theta_1\circ\varphi$ is surjective and
$\varphi''=\iota_1\circ\varphi_f\circ\pi$, it follows that $\varphi_f$ is surjective.\qed


\begin{thebibliography}{99}
	
	\bibitem{APW} H.~H. Andersen, P. Polo and K. Wen,
	Representations of quantum algebras,
	\emph{Invent. Math.} \textbf{104} (1991), 1--59.
	
	\bibitem{AST} H.~H. Andersen, C. Stroppel and D. Tubbenhauer,
	Cellular structures using $U_q$-tilting modules,
	\emph{Pacific J. Math.} \textbf{292} (2018), 21--59.
	
	\bibitem{BW} J.~S. Birman and H. Wenzl,
	Braids, link polynomials and a new algebra,
	\emph{Trans. Amer. Math. Soc.} \textbf{313} (1989), 249--273.
	
	\bibitem{Brauer} R. Brauer,
	On algebras which are connected with semisimple continuous groups,
	\emph{Ann. of Math.} \textbf{38} (1937), 857--872.
	
	\bibitem{CP} V. Chari and A. Pressley,
	\emph{A Guide to Quantum Groups},
	Cambridge University Press, Cambridge, 1994.
	
	\bibitem{DePr} C. De Concini and C. Procesi,
	A characteristic free approach to invariant theory,
	\emph{Adv. Math.} \textbf{21} (1976), 330--354.
	
	\bibitem{DeSt} C. De Concini and E. Strickland,
	Traceless tensors and the symmetric groups,
	\emph{J. Algebra} \textbf{61} (1979), 112--128.
	
	\bibitem{DDH} R. Dipper, S. Doty and J. Hu,
	Brauer algebras, symplectic Schur algebras and Schur--Weyl duality,
	\emph{Trans. Amer. Math. Soc.} \textbf{360} (2008), 189--213.
	
	\bibitem{Donkin} S. Donkin,
	\emph{The $q$-Schur Algebra},
	London Mathematical Society Lecture Note Series, Vol.~253,
	Cambridge University Press, Cambridge, 1998.
	
	\bibitem{Eisenbud95} D. Eisenbud,
	\emph{Commutative Algebra with a View Toward Algebraic Geometry},
	Graduate Texts in Mathematics, Vol.~150,
	Springer, New York, 1995.
	
	\bibitem{Enyang04} J. Enyang,
	A cellular basis for the Birman--Murakami--Wenzl algebras,
	\emph{J. Algebra} \textbf{281} (2004), 512--535.
	
	\bibitem{EGNO} P. Etingof, S. Gelaki, D. Nikshych and V. Ostrik,
	\emph{Tensor Categories},
	Mathematical Surveys and Monographs, Vol.~205,
	American Mathematical Society, Providence, RI, 2015.
	
	\bibitem{FY89} P. Freyd and D. Yetter,
	Braided compact closed categories with applications to low-dimensional topology,
	\emph{Adv. Math.} \textbf{77} (1989), 156--182.
	
	\bibitem{GoodW} R. Goodman and N.~R. Wallach,
	\emph{Representations and Invariants of Classical Groups},
	Cambridge University Press, Cambridge, 1998.
	
	\bibitem{GrahamLehrer96} J.~J. Graham and G.~I. Lehrer,
	Cellular algebras,
	\emph{Invent. Math.} \textbf{123} (1996), 1--34.
	
	\bibitem{Haya} T. Hayashi,
	Quantum deformation of classical groups,
	\emph{Publ. RIMS, Kyoto Univ.} \textbf{28} (1992), 57--81.
	
	\bibitem{Hu} J. Hu,
	Dual partially harmonic tensors and Brauer--Schur--Weyl duality,
	\emph{Transform. Groups} \textbf{15} (2010), 333--370.
	
	\bibitem{Hu11} J. Hu,
	BMW algebra, quantized coordinate algebra and type $C$ Schur--Weyl duality,
	\emph{Represent. Theory} \textbf{15} (2011), 1--62.
	
	\bibitem{HuXiao11} J. Hu and Z. Xiao,
	Partially harmonic tensors and quantized Schur--Weyl duality,
	in \emph{Nankai Ser. Pure Appl. Math. Theoret. Phys.}, Vol.~8,
	World Scientific, Singapore, 2012, pp.~109--137.
	
		\bibitem{HuXiao21} J. Hu and Z. Xiao,
	Tilting modules, dominant dimensions and Brauer--Schur--Weyl duality,
	\emph{Trans. Amer. Math. Soc. Ser. B} \textbf{8} (2021), 823--848.
	
	\bibitem{Jan} J.~C. Jantzen,
	\emph{Representations of Algebraic Groups},
	Mathematical Surveys and Monographs, Vol.~107 (2nd ed.),
	American Mathematical Society, Providence, RI, 2003.
	
	\bibitem{Ka} L.~H. Kauffman,
	An invariant of regular isotopy,
	\emph{Trans. Amer. Math. Soc.} \textbf{318} (1990), 417--471.
	
	\bibitem{Lam06} T.~Y. Lam,
	\emph{Serre's Problem on Projective Modules},
	Springer Monographs in Mathematics,
	Springer, Berlin, 2006.
	
	\bibitem{Lus} G. Lusztig,
	\emph{Introduction to Quantum Groups},
	Progress in Mathematics, Vol.~110,
	Birkh\"{a}user, Boston, 1993.
	
	\bibitem{Mali} M. Maliakas,
	Traceless tensors and invariants,
	\emph{Math. Proc. Cambridge Philos. Soc.} \textbf{124} (1998), 73--80.
	
	
	\bibitem{MW}	H. ~R. Morton and A.~J. Wassermann, A basis for the Birman Murakami Wenzl algebra,
	unpublished paper, 2000, https://www.liverpool.ac.uk/~su14/papers/WM.pdf
	
	\bibitem{Mu} J. Murakami,
	The Kauffman polynomial of links and representation theory,
	\emph{Osaka J. Math.} \textbf{26} (1987), 745--758.
	

	
	\bibitem{Oehms} S. Oehms,
	Symplectic $q$-Schur algebras,
	\emph{J. Algebra} \textbf{304} (2006), 851--905.
	
	\bibitem{RT} N.~Y. Reshetikhin and V.~G. Turaev,
	Ribbon graphs and their invariants derived from quantum groups,
	\emph{Comm. Math. Phys.} \textbf{127} (1990), 1--26.
	
%	\bibitem{Wenzl88} H. Wenzl,
%	Hecke algebras of type $A_n$ and subfactors,
%	\emph{Invent. Math.} \textbf{92} (1988), 349--383.
	
	\bibitem{Xi00} C. Xi,
	On the quasi-heredity of Birman--Murakami--Wenzl algebras,
	\emph{Adv. Math.} \textbf{154} (2000), 280--298.
	
	\bibitem{XYZ} Z. Xiao, Y. Yang and Y. Zhang,
	The diagram category of framed tangles and invariants of quantized symplectic group,
	\emph{Sci. China Math.} \textbf{63} (2020), 689--700.
	
\end{thebibliography}
\end{document}